\numberwithin{equation}{section}
\newtheorem{Theorem}{Theorem}[section]
\newtheorem{Lemma}[Theorem]{Lemma}
\newtheorem{Proposition}[Theorem]{Proposition}
 { \theoremstyle{definition}
\newtheorem{Definition}[Theorem]{Definition} }
\def\R{\operatorname{R}}
\def\RR{\R}
\def\h{{\mathbf{h}}}
\def\Diff{\operatorname{Dif\/f}_{\h}}
\def\Diffs{\operatorname{Dif\/f}_{\h,\sigma}}
\def\der{\bar{\partial}}
\def\Z{x}
\def\th{\tilde{h}}
\def\q{\check{\operatorname{q}}}
\def\U{\operatorname{U}}
\def\gln{{\mathbf{gl}}_n}
\def\V{\operatorname{V}}
\def\F{\operatorname{F}}
\def\PPsi{\Psi}
\def\QQ{\operatorname{Q}}
\begin{document}

%\allowdisplaybreaks

\newcommand{\arXivNumber}{1704.05330}

\renewcommand{\thefootnote}{}

\renewcommand{\PaperNumber}{082}

\FirstPageHeading

\ShortArticleName{Dif\/ferential Calculus on $\h$-Deformed Spaces}

\ArticleName{Dif\/ferential Calculus on h-Deformed Spaces\footnote{This paper is a~contribution to the Special Issue on Recent Advances in Quantum Integrable Systems. The full collection is available at \href{http://www.emis.de/journals/SIGMA/RAQIS2016.html}{http://www.emis.de/journals/SIGMA/RAQIS2016.html}}}

\Author{Basile HERLEMONT~$^\dag$ and Oleg OGIEVETSKY~$^{\dag\ddag\S}$}

\AuthorNameForHeading{B.~Herlemont and O.~Ogievetsky}

\Address{$^\dag$~Aix Marseille Univ, Universit\'e de Toulon, CNRS, CPT, Marseille, France}
\EmailD{\href{mailto:herlemont.basile@hotmail.fr}{herlemont.basile@hotmail.fr}, \href{mailto:oleg@cpt.univ-mrs.fr}{oleg@cpt.univ-mrs.fr}}

\Address{$^\ddag$~Kazan Federal University, Kremlevskaya 17, Kazan 420008, Russia}

\Address{$^\S$~On leave of absence from P.N.~Lebedev Physical Institute,\\
\hphantom{$^\S$}~Leninsky Pr.~53, 117924 Moscow, Russia}

\ArticleDates{Received April 18, 2017, in f\/inal form October 17, 2017; Published online October 24, 2017}

\Abstract{We construct the rings of generalized dif\/ferential operators on the ${\bf h}$-deformed vector space of ${\bf gl}$-type. In contrast to the $q$-deformed vector space, where the ring of dif\/ferential operators is unique up to an isomorphism, the general ring of ${\bf h}$-deformed dif\/ferential operators $\operatorname{Dif\/f}_{{\bf h},\sigma}(n)$ is labeled by a rational function $\sigma$ in $n$ variables, satisfying an over-determined system of f\/inite-dif\/ference equations. We obtain the general solution of the system and describe some properties of the rings $\operatorname{Dif\/f}_{{\bf h},\sigma}(n)$.}

\Keywords{dif\/ferential operators; Yang--Baxter equation; reduction algebras; universal enveloping algebra; representation theory; Poincar\'e--Birkhof\/f--Witt property; rings of fractions}

\Classification{16S30; 16S32; 16T25; 13B30; 17B10; 39A14}

\renewcommand{\thefootnote}{\arabic{footnote}}
\setcounter{footnote}{0}

\section{Introduction}
As the coordinate rings of $q$-deformed vector spaces, the coordinate rings of $\h$-deformed vector spaces are def\/ined with the help of a solution of the dynamical Yang--Baxter equation. The coordinate rings of $\h$-deformed vector spaces appeared in several contexts. In~\cite{BF} it was observed that such coordinate rings generate the Clebsch--Gordan coef\/f\/icients for ${\rm GL}(2)$. These coordinate rings appear in the study of the cotangent bundle to a quantum group~\cite{AF} and in the study of zero-modes in the WZNW model \cite{AF,FHIOPT,HIOPT}.

The coordinate rings of $\h$-deformed vector spaces appear naturally in the theory of reduction algebras. The reduction algebras \cite{KO1,M,T,Zh} are designed to study the decompositions of representations of an associative algebra $\mathcal{B}$ with respect to its subalgebra $\mathcal{B}'$. Let $\mathcal{B}'$ be the universal enveloping algebra of a reductive Lie algebra $\mathbf{g}$. Let $M$ be a $\mathbf{g}$-module and $\mathcal{B}$ the universal enveloping algebra of the semi-direct product of $\mathbf{g}$ with the abelian Lie algebra formed by $N$ copies of $M$. Then the corresponding reduction algebra is precisely the coordinate ring of~$N$ copies of $\h$-deformed vector spaces.

We restrict our attention to the case $\mathbf{g}=\mathbf{gl}(n)$. Let $V$ be the tautological $\mathbf{gl}(n)$-module and $V^*$ its dual. We denote by $V(n,N)$ the reduction algebra related to~$N$ copies of~$V$ and by~$V^*(n,N)$ the reduction algebra related to~$N$ copies of~$V^*$.

In this article we develop the dif\/ferential calculus on the $\h$-deformed vector spaces of $\mathbf{gl}$-type as it is done in~\cite{WZ} for the
$q$-deformed spaces. Formulated dif\/ferently, we study the consistent, in the sense, explained in Section~\ref{maquse}, pairings between the rings $V(n,N)$ and $V^*(n,N')$. A~consistent pairing allows to construct a f\/lat deformation of the reduction algebra, related to~$N$ copies of $V$ and $N'$ copies of $V^*$. We show that for $N>1$ or $N'>1$ the pairing is essentially unique. However it turns out that for $N=N'=1$ the result is surprisingly dif\/ferent from that for $q$-deformed vector spaces. The consistency leads to an over-determined system of f\/inite-dif\/ference equations for a certain rational function $\sigma$, which we call ``potential'', in~$n$ variables. The solution space $\mathcal{W}$ can be described as follows. Let $\mathbb{K}$ be the ground ring of characteristic~0 and~$\mathbb{K}[t]$ the space of univariate polynomials over $\mathbb{K}$. Then $\mathcal{W}$ is isomorphic to $\mathbb{K}[t]^n$ modulo the $(n-1)$-dimensional subspace spanned by $n$-tuples $(t^j,\dots,t^j)$ for $j=0,1,\dots,n-2$. Thus for each $\sigma\in\mathcal{W}$ we have a ring~$\Diffs(n)$ of generalized $\h$-deformed dif\/ferential operators. The polynomial solutions~$\sigma$ are linear combinations of complete symmetric polynomials; they correspond to the diagonal of~$\mathbb{K}[t]^n$.
The ring $\Diffs(n)$ admits the action of the so-called Zhelobenko automorphisms if and only if the potential $\sigma$ is polynomial.

In Section \ref{corivvz} we give the def\/inition of the coordinate rings of $\h$-deformed vector spaces of $\mathbf{gl}$-type.

Section \ref{hdiff-sect} starts with the description of two dif\/ferent known pairings between $\h$-deformed vector spaces, that is, two dif\/ferent f\/lat deformations of the reduction algebra related to $V\oplus V^*$. The f\/irst deformation is the ring $\Diff(n)$ which is the reduction algebra, with respect to $\gln$, of the classical ring of polynomial dif\/ferential operators. The second ring is related to the reduction algebra, with respect to $\gln$, of the algebra $\U(\mathbf{gl}_{n+1})$. These two examples motivate our study. Then, in Section~\ref{hdiff-sect}, we formulate the main question and results. We present the system of the f\/inite-dif\/ference equations resulting from the Poincar\'e--Birkhof\/f--Witt property of the ring of generalized $\h$-deformed dif\/ferential operators. We obtain the general solution of the system and establish the existence of the potential. We give a characterization of polynomial potentials. We describe the centers of the rings $\Diffs(n)$ and construct an isomorphism between a certain ring of fractions of the ring~$\Diffs(n)$ and a certain ring of fractions of the Weyl algebra. We describe a family of the lowest weight representations and calculate the values of central elements on them. We establish the uniqueness of the deformation in the situation when we have several copies of~$V$ or~$V^*$.

Section \ref{sepro} contains the proofs of the statements from Section~\ref{hdiff-sect}.

{\bf Notation.} We denote by $\mathbb{S}_n$ the symmetric group on $n$ letters. The symbol $s_i$ stands for the transposition $(i,i+1)$.

Let $\h(n)$ be the abelian Lie algebra with generators $\th_i$, $i=1,\dots,n$, and $\U(n)$ its universal enveloping algebra. Set $\th_{ij}=\th_i-\th_j\in \h(n)$. We def\/ine $\bar{\U}(n)$ to be the ring of fractions of the commutative ring $\U(n)$ with respect to the multiplicative set of denominators, generated by the elements $\big(\th_{ij}+a\big)^{-1}$, $a\in\mathbb{Z}$, $i,j=1,\dots,n$, $i\neq j$. Let
\begin{gather}\label{defphichi}
\psi_i:=\prod_{k\colon k>i}\th_{ik}, \psi_i':=\prod_{k\colon k<i}\th_{ik}\qquad \text{and}\qquad \chi_i:=\psi_i\psi_i',\qquad i=1,\dots,n.
\end{gather}
Let $\varepsilon_j$, $j=1,\dots,n$, be the elementary translations of the generators of $\U(n)$, $\varepsilon_j\colon \th_i\mapsto\th_i+\delta_i^j$. For an element $p\in \bar{\U}(n)$ we denote $\varepsilon_j(p)$ by $p[\varepsilon_j]$. We shall use the f\/inite-dif\/ference opera\-tors~$\Delta_j$ def\/ined by
\begin{gather*}\Delta_j f:=f-f[-\varepsilon_j] .\end{gather*}

We denote by $e_L$, $L=0,\dots,n$, the elementary symmetric polynomials in the variables $\th_1,\dots,\th_n$, and by $e(t)$ the generating function of the polynomials~$e_L$,
\begin{gather*}%\label{eah}
e_L=\sum_{i_1<\dots<i_L}\th_{i_1}\cdots \th_{i_L} ,\qquad e(t)=\sum_{L=0}^n e_L t^L=\prod_{i=1}^n \big(1+\th_i t\big).\end{gather*}

We denote by $\RR\in\operatorname{End}_{\bar{\U}(n)}\big( \bar{\U}(n)^n\otimes_{\bar{\U}(n)}\bar{\U}(n)^n\big)$ the standard solution of the dynamical Yang--Baxter equation
\begin{gather*} \sum_{a,b,u}{\RR}^{ij}_{ab}\RR^{bk}_{ur}[-\varepsilon_a]\RR^{au}_{mn}=\sum_{a,b,u}
\RR^{jk}_{ab}[-\varepsilon_i]\RR^{ia}_{mu} \RR^{ub}_{nr}[-\varepsilon_m] \end{gather*}
of type~A. The nonzero components of the operator $\RR$ are
\begin{gather}\label{dynRcompb}
\RR_{ij}^{ij}=\frac{1}{\th_{ij}},\qquad i\not=j,\qquad\text{and}\qquad \RR_{ji}^{ij}=
\begin{cases}
 \dfrac{\th_{ij}^2-1}{\th_{ij}^2},& i<j,\\
 1,& i\geq j.
\end{cases}
\end{gather}
We shall need the following properties of $\RR$:
\begin{gather}\label{weze}\RR^{ij}_{kl}[\varepsilon_i+\varepsilon_j]=\RR^{ij}_{kl},\qquad i,j,k,l=1,\dots,n,\\
\label{iceco}\RR^{ij}_{kl}=0\qquad \text{if}\quad (i,j)\neq (k,l)\ \text{or}\ (l,k),\\
\label{Q5do}\RR^2=\text{Id}.\end{gather}

We denote by $\PPsi\in\operatorname{End}_{\bar{\U}(n)}\big( \bar{\U}(n)^n\otimes_{\bar{\U}(n)}\bar{\U}(n)^n\big)$ the dynamical version of the skew inverse of the operator~$\RR$, def\/ined by
\begin{gather}\label{skewn}
\sum_{k,l} \PPsi_{jl}^{ik} \RR_{nk}^{ml}[\varepsilon_m]=\delta_{n}^i\delta_j^m.
\end{gather}
The nonzero components of the operator $\PPsi$ are, see \cite{KO5},
\begin{gather}\label{explpsi}\PPsi^{ij}_{ij}=\QQ^+_i\QQ^-_j\frac{1}{\th_{ij}+1},\qquad \PPsi^{ij}_{ji}= \begin{cases}
1,& i<j,\\
\displaystyle{\frac{\big(\th_{ij}-1\big)^2}{\th_{ij}\big(\th_{ij}-2\big)} },& i>j,\end{cases}\end{gather}
where
\begin{gather*}%\label{defqpm}
\QQ^{\pm}_i=\frac{\chi_i[\pm\varepsilon_i]}{\chi_i} . \end{gather*}

\section{Coordinate rings of h-deformed vector spaces}\label{corivvz}

Let $\F(n,N)$ be the ring with the generators $\Z^{i\alpha}$, $i=1,\dots,n$, $\alpha=1,\dots,N$, and $\th_i$, $i=1,\dots,n$, with the def\/ining relations
\begin{gather}\label{hdesp1a}\th_i\th_j=\th_j\th_i,\qquad i,j=1,\dots,n,\\
\label{hdesp1nn}\th_i\Z^{j\alpha}=\Z^{j\alpha}\big(\th_i+\delta_i^j\big),\qquad i,j=1,\dots,n,\quad \alpha=1,\dots,N.\end{gather}
We shall say that an element $f\in \F(n,N)$ has an $\h(n)$-weight $\omega\in\h(n)^*$ if
\begin{gather}\label{hdesp1aw}\th_if=f\big(\th_i+\omega\big(\th_i\big)\big),\qquad i=1,\dots,n.\end{gather}
The ring $\U(n)$ is naturally the subring of~$\F(n,N)$. Let $\bar{\F}(n,N):=\bar{\U}(n)\otimes_{\U(n)}\F(n,N)$.
The coordinate ring $\V(n,N)$ of~$N$ copies of the $\h$-deformed vector space is the factor-ring of $\bar{\F}(n,N)$ by the relations
\begin{gather}\label{hdesp2}\Z^{i\alpha}\Z^{j\beta}=\sum_{k,l}\RR^{ij}_{kl}\Z^{k\beta}\Z^{l\alpha},\qquad i,j=1,\dots,n,\quad \alpha,\beta=1,\dots,N.\end{gather}
The ring $\V(n,N)$ is the reduction algebra, with respect to $\gln$, of the semi-direct product of $\gln$ and the abelian Lie algebra $V\oplus V\oplus\cdots\oplus V$ ($N$ times) where $V$ is the (tautological) $n$-dimensional $\gln$-module. According to the general theory of reduction algebras \cite{KO1,KO4, Zh}, $\V(n,N)$ is a free left (or right) $\bar{\U}(n)$-module; the ring $\V(n,N)$ has the following Poincar\'e--Birkhof\/f--Witt property:
\begin{gather} \text{given an arbitrary order on the set $\Z^{i\alpha}$, $i=1,\dots,n$, $\alpha=1,\dots,N$, the set}\nonumber\\
\text{of all ordered monomials in $\Z^{i\alpha}$ is a basis of the left $\bar{\U}(n)$-module $\V(n,N)$.} \label{hdesppbw}\end{gather}
Moreover, if $\{\RR_{ij}^{kl}\}_{i,j,k,l=1}^n$ is an arbitrary array of functions in $\th_i$, $i=1,\dots,n$, then the Poincar\'e--Birkhof\/f--Witt property of the algebra def\/ined by the relations~(\ref{hdesp2}), together with the weight prescriptions~(\ref{hdesp1nn}), implies that~$\RR$ satisf\/ies the dynamical Yang--Baxter equation when $N\geq 3$.

Similarly, let $\F^*(n,N)$ be the ring with the generators $\der_{i\alpha}$, $i=1,\dots,n$, $\alpha=1,\dots,N$, and~$\th_i$, $i=1,\dots,n$, with the def\/ining relations (\ref{hdesp1a}) and
\begin{gather}\label{hdesp3}\th_i\der_{j\alpha}=\der_{j\alpha}\big(\th_i-\delta_i^j\big),\qquad i,j=1,\dots,n,\quad \alpha=1,\dots,N.\end{gather}
Let $\bar{\F}^*(n,N):=\bar{\U}(n)\otimes_{\U(n)}\F^*(n,N)$. The $\h(n)$-weights are def\/ined by the same equation~(\ref{hdesp1aw}).
The coordinate ring $\V^*(n,N)$ of $N$ copies of the ``dual'' $\h$-deformed vector space is the factor-ring of $\bar{\F}^*(n,N)$ by the relations
\begin{gather}\label{hdesp4}\der_{l\alpha}\der_{k\beta}=\sum_{i,j}\der_{j\beta}\der_{i\alpha}\RR^{ij}_{kl},\qquad k,l=1,\dots,n,\quad \alpha,\beta=1,\dots,N.\end{gather}
Again, the ring $\V^*(n,N)$ is the reduction algebra, with respect to $\gln$, of the semi-direct product of $\gln$ and the abelian Lie algebra $V^*\oplus V^*\oplus\cdots\oplus V^*$ ($N$~times) where~$V^*$ is the $\gln$-module, dual to~$V$. The ring $\V^*(n,N)$ is a free left (or right) $\bar{\U}(n)$-module; it has a similar to $\V(n,N)$ Poincar\'e--Birkhof\/f--Witt property:
\begin{gather} \text{given an arbitrary order on the set $\der_{i\alpha}$ , $i=1,\dots,n$, $\alpha=1,\dots,N$, the set}\nonumber\\
\text{of all ordered monomials in $\der_{i\alpha}$ is a basis of the left $\bar{\U}(n)$-module $\V^*(n,N)$.}\label{hdesppbwd}
\end{gather}
Again, the Poincar\'e--Birkhof\/f--Witt property of the algebra def\/ined by the relations (\ref{hdesp4}), together with the weight prescriptions~(\ref{hdesp3}), implies that $\RR$ satisf\/ies the dynamical Yang--Baxter equation when $N\geq 3$.

For $N=1$ we shall write $\V(n)$ and $\V^*(n)$ instead of $\V(n,1)$ and $\V^*(n,1)$.

\section{Generalized rings of h-deformed dif\/ferential operators}\label{hdiff-sect}

\subsection{Two examples}\label{twoexse}
Before presenting the main question we consider two examples.

{\bf 1.} We denote by $\text{W}_{n}$ the algebra of polynomial dif\/ferential operators in $n$ variables. It is the algebra with the generators $X^{j}$, $D_{j}$, $j=1,\dots,n$, and the def\/ining relations
\begin{gather*}X^{i}X^{j}=X^{j}X^{i},\qquad D_{i}D_{j}=D_{j}D_{i},\qquad D_{i}X^{j}=\delta_i^j+X^{j}D_{i},\qquad i,j=1,\dots,n.\end{gather*}
The map, def\/ined on the set $\{ e_{ij}\}_{i,j=1}^n$ of the standard generators of $\gln$ by
\begin{gather*}%\label{stdi1}
e_{ij}\mapsto X^{i}D_{j},
\end{gather*}
extends to a homomorphism $\U(\gln)\to \text{W}_{n}$. The reduction algebra of $\text{W}_{n}\otimes \U(\gln)$ with respect to the diagonal embedding
of $\U(\gln)$ was denoted by~$\Diff(n)$ in~\cite{KO5}. It is generated, over $\bar{\U}(n)$, by the images $\Z^{i}$ and $\partial_{i}$,
$i=1,\dots,n$, of the generators~$X^{i}$ and~$D_{i}$. Let
\begin{gather*}\der_{i}:=\partial_{i} \frac{\psi_i}{\psi_i[-\varepsilon_i]},\end{gather*}
where the elements $\psi_i$ are def\/ined in (\ref{defphichi}). Then
\begin{gather}\label{stdi2}\Z^{i}\der_{j}=\sum_{k,l}\der_{k} \RR_{lj}^{ki} \Z^{l}-\delta_{j}^i\sigma_i^{(\text{Dif\/f})} ,
\end{gather}
where $\sigma_i^{(\text{Dif\/f})}=1$, $i=1,\dots,n$. The $\h(n)$-weights of the generators are given by (\ref{hdesp1nn}) and (\ref{hdesp3}). Moreover, the set of the def\/ining relations, over $\bar{\U}(n)$, for the generators~$\Z^{i}$ and $\der_{i}$, $i=1,\dots,n$, consists of (\ref{hdesp2}), (\ref{hdesp4}) (with $N=1$) and~(\ref{stdi2}) (see \cite[Proposition~3.3]{KO5}).

The algebra $\Diff(n,N)$, formed by $N$ copies of the algebra $\Diff(n)$, was used in \cite{KN} for the study of the representation theory of Yangians, and in \cite{KO5} for the R-matrix description of the diagonal reduction algebra of $\gln$ (we refer to \cite{KO2,KO3} for generalities on the diagonal reduction algebras of ${\bf gl}$ type).

{\bf 2.} Identifying each $n\times n$ matrix $a$ with the larger matrix $\left(\begin{smallmatrix}a&0\\0&0\end{smallmatrix}\right)$ gives an embedding of $\gln$ into ${\bf gl}_{n+1}$. The resulting reduction algebra $\mathcal{R}^{\U(\mathbf{gl}_{n+1})}_{\gln}$, or simply $\mathcal{R}^{{\bf gl}_{n+1}}_{\gln}$, was denoted by ${\rm AZ}_n$ in~\cite{Zh}. It is generated, over $\bar{\U}(n)$, by the elements $x^i$, $y_i$, $i=1,\dots,n$, and $\th_{n+1}=z-(n+1)$, where $x^i$ and $y_i$ are the images of the standard generators $e_{i,n+1}$ and $e_{n+1,i}$ of $\U(\mathbf{gl}_{n+1})$
and $z$ is the image of the standard generator $e_{n+1,n+1}$. Let
\begin{gather*}\der_{i}:=y_{i}\frac{\psi_i}{\psi_i[-\varepsilon_i]},\end{gather*}
where the elements $\psi_i$ are def\/ined in~(\ref{defphichi}) (they depend on $\th_1,\dots,\th_n$ only). The $\h(n)$-weights of the generators are given by (\ref{hdesp1nn}) and (\ref{hdesp3}) while
\begin{gather*}\th_{n+1}\Z^i=\Z^i\big(\th_{n+1}-1\big),\qquad \th_{n+1}\der_i=\der_i\big(\th_{n+1}+1\big),\qquad i=1,\dots,n.\end{gather*}
The set of the remaining def\/ining relations consists of (\ref{hdesp2}), (\ref{hdesp4}) (with $N=1$) and
\begin{gather}\label{stdi2a}\Z^{i}\der_{j}=\sum_{k,l}\der_{k} \RR_{lj}^{ki} \Z^{l}-\delta_{j}^i \sigma_i^{({\rm AZ})} ,\end{gather}
where
\begin{gather*}%\label{stdisiDiff}
\sigma_i^{({\rm AZ})}=-\th_i+\th_{n+1}+1, \qquad i=1,\dots,n.\end{gather*}

The algebra ${\rm AZ}_n$ was used in \cite{H} for the study of Harish-Chandra modules and in~\cite{Zh1} for the construction of the Gelfand--Tsetlin bases~\cite{GT}.

The algebra ${\rm AZ}_n$ has a central element
\begin{gather}\label{stdicee} \th_1+\dots+\th_n+\th_{n+1} .\end{gather}
In the factor-algebra $\overline{{\rm AZ}}_n$ of ${\rm AZ}_n$ by the ideal, generated by the element (\ref{stdicee}), the relation (\ref{stdi2a}) is replaced by
\begin{gather}\label{stdi2bb}\Z^{i}\der_{j}=\sum_{k,l}\der_{k} \RR_{lj}^{ki} \Z^{l}-\delta_{j}^i \sigma_i^{(\overline{{\rm AZ}})} ,
\end{gather}
with
\begin{gather*}%\label{stdisiAZ}
\sigma_i^{(\overline{{\rm AZ}})}=-\th_i-\sum_{k=1}^n \th_k+1,\qquad i=1,\dots,n.\end{gather*}

\subsection{Main question and results}\label{maquare}

\subsubsection{Main question}\label{maquse}
Both rings, $\Diff(n)$ and $\overline{{\rm AZ}}_n$ satisfy the Poincar\'e--Birkhof\/f--Witt property. The only dif\/ference between these rings is in the form of the zero-order terms $\sigma_i^{(\text{Dif\/f})}$ and $\sigma_i^{(\overline{{\rm AZ}})}$ in the cross-commutation relations (\ref{stdi2}) and (\ref{stdi2bb}) (compare to the ring of $q$-dif\/ferential operators \cite{WZ} where the zero-order term is essentially~-- up to redef\/initions~-- unique). It is therefore natural to investigate possible generalizations of the rings $\Diff(n)$ and $\overline{{\rm AZ}}_n$. More precisely, given~$n$ elements $\sigma_1,\dots,\sigma_n$ of~$\bar{\U}(n)$, we let $\Diff(\sigma_1,\dots,\sigma_n)$ be the ring, over~$\bar{\U}(n)$, with the generators~$\Z^{i}$ and~$\der_{i}$, $i=1,\dots,n$, subject to the def\/ining relations (\ref{hdesp2}), (\ref{hdesp4}) (with $N=1$) and the oscillator-like relations
\begin{gather}\label{stdi2age}\Z^{i}\der_{j}=\sum_{k,l}\der_{k} \RR_{lj}^{ki} \Z^{l}-\delta_{j}^i \sigma_i.
\end{gather}
The weight prescriptions for the generators are given by~(\ref{hdesp1nn}) and~(\ref{hdesp3}). The diagonal form of the zero-order term (the Kronecker symbol $\delta_{j}^i$ in the right hand side of~(\ref{stdi2age})) is dictated by the $\h(n)$-weight considerations.

We shall study conditions under which the ring $\Diff(\sigma_1,\dots,\sigma_n)$ satisf\/ies the Poincar\'e--Birkhof\/f--Witt property. More specif\/ically, since the rings $\V(n)$ and $\V^*(n)$ both satisfy the Poincar\'e--Birkhof\/f--Witt property, our aim is to study conditions under which $\Diff(\sigma_1,\dots,\sigma_n)$ is isomorphic, as a~$\bar{\U}(n)$-module, to $\V^*(n)\otimes_{\bar{\U}(n)} \V(n)$.

The assignment
\begin{gather}\label{defist}\deg\big(x^i\big)=\deg\big(\der_i\big)=1,\qquad i=1,\dots,n,\end{gather}
def\/ines the structure of a f\/iltered algebra on $\Diff(\sigma_1,\dots,\sigma_n)$. The associated graded algebra is the homogeneous algebra $\Diff(0,\dots,0)$. This homogeneous algebra has the desired Poincar\'e--Birkhof\/f--Witt property because it is the reduction algebra, with respect to $\gln$, of the semi-direct product of $\gln$ and the abelian Lie algebra $V\oplus V^*$.

The standard argument shows that the ring $\Diff(\sigma_1,\dots,\sigma_n)$ can be viewed as a deformation of the homogeneous ring $\Diff(0,\dots,0)$: for the generating set $\big\{x'^ i,\der_i\big\}$, where $x'^i=\hbar x^i$, all def\/ining relations are the same except~(\ref{stdi2age}) in which $\sigma_i$ gets replaced by $\hbar\sigma_i$; one can consider~$\hbar$ as the deformation parameter. Thus our aim is to study the conditions under which this deformation is f\/lat.

\subsubsection{Poincar\'e--Birkhof\/f--Witt property}
It turns out that the Poincar\'e--Birkhof\/f--Witt property is equivalent to the system of f\/inite-dif\/ference equations for the elements $\sigma_1,\dots,\sigma_n\in\bar{\U}(n)$.

\begin{Proposition}\label{equasigmasb} The ring \emph{$\Diff(\sigma_1,\dots,\sigma_n)$} satisf\/ies the Poincar\'e--Birkhoff--Witt property if and only if the elements $\sigma_1,\dots,\sigma_n\in\bar{\U}(n)$ satisfy the following linear system of finite-difference equations
\begin{gather}\label{eqsigib}\th_{ij} \Delta_j \sigma_i = \sigma_i - \sigma_j,\qquad i,j=1,\dots,n. \end{gather}
\end{Proposition}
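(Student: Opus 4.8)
The plan is to verify that the Poincaré–Birkhoff–Witt property of $\Diff(\sigma_1,\dots,\sigma_n)$ amounts to the associativity (consistency) of the normal-ordering procedure, and to extract the finite-difference system (\ref{eqsigib}) as the precise obstruction to flatness. First I would fix a normal ordering, say all $\der$'s to the left of all $\Z$'s, and declare the candidate PBW basis of $\V^*(n)\otimes_{\bar{\U}(n)}\V(n)$ to be the ordered monomials $\der_{i_1}\cdots\der_{i_p}\Z^{j_1}\cdots\Z^{j_q}$ over $\bar{\U}(n)$. Because the associated graded ring $\Diff(0,\dots,0)$ already has the PBW property (it is the reduction algebra of $V\oplus V^*$), the only thing that can fail in the filtered ring is that the rewriting rules be inconsistent on overlaps; by the diamond lemma it suffices to check the single minimal overlap that involves the inhomogeneous relation (\ref{stdi2age}), namely the triple product $\Z^{i}\der_{j}\der_{k}$ (and, symmetrically, $\Z^{i}\Z^{j}\der_{k}$). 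The quadratic relations (\ref{hdesp2}), (\ref{hdesp4}) are already consistent since they define the flat rings $\V(n)$, $\V^*(n)$, so they contribute no new constraint.

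Next I would compute the two reductions of $\Z^{i}\der_{j}\der_{k}$. Reducing $\Z^{i}\der_{j}$ first via (\ref{stdi2age}) produces a term $\sum_{k',l}\der_{k'}\RR^{k'i}_{lj}\Z^{l}\der_k$ plus the zero-order piece $-\delta^i_j\sigma_i\der_k$; I then reduce the remaining $\Z^{l}\der_{k}$ again by (\ref{stdi2age}). Reducing instead the pair $\der_j\der_k$ first through (\ref{hdesp4}) and only afterwards commuting $\Z^i$ past the $\der$'s yields a second normal form. Equating the two expressions, the purely quadratic-in-$\der$ terms cancel automatically (this is exactly the dynamical Yang–Baxter equation for $\RR$, already quoted), and what survives is a linear-in-$\der$, zero-order discrepancy. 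Collecting the coefficient of each $\der_m$ and using the explicit components (\ref{dynRcompb}) of $\RR$ together with the weight-shift rules (\ref{hdesp1nn}), (\ref{hdesp3}) and the covariance property (\ref{weze}), this discrepancy reorganizes into the stated relations. The $\sigma_i$ must be moved past the $\RR$-coefficients and the $\der$'s, which generates the elementary translations $\sigma_i[-\varepsilon_j]$, hence the finite-difference operator $\Delta_j$; the denominators $\th_{ij}$ in $\RR^{ij}_{ij}$ are what produce the prefactor $\th_{ij}$ in (\ref{eqsigib}).

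The main obstacle is the bookkeeping of these $\RR$-coefficients: one must track how $\sigma_i$ acquires a shift $\sigma_i\mapsto\sigma_i[-\varepsilon_j]$ as it is commuted to the right of $\der_j$ (by (\ref{hdesp3})), and correctly combine the $i<j$ and $i>j$ cases of (\ref{dynRcompb}) so that the off-diagonal $\RR^{ij}_{ji}$ contributions assemble into the single symmetric-looking equation $\th_{ij}\Delta_j\sigma_i=\sigma_i-\sigma_j$. I expect the cleanest route is to first establish (\ref{eqsigib}) for adjacent indices $j=i\pm1$ using the reduced two-variable form of $\RR$, and then argue that the general case follows either by the same overlap computation carried out for arbitrary $i,j$ or by invoking the $\mathbb{S}_n$-covariance built into the $\RR$-matrix. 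Conversely, to show sufficiency I would run the identical overlap computation in reverse: assuming (\ref{eqsigib}), the two normal forms of $\Z^{i}\der_j\der_k$ coincide, so the diamond lemma guarantees that the ordered monomials form a free $\bar{\U}(n)$-basis, which is precisely the PBW property and the asserted module isomorphism with $\V^*(n)\otimes_{\bar{\U}(n)}\V(n)$.
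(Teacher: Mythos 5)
Your proposal follows essentially the same route as the paper: the diamond lemma applied to the ordered monomials, the observation that the associated graded ring $\Diff(0,\dots,0)$ already has the PBW property so only the lower-degree discrepancy in the ambiguity $\Z^i\der_j\der_k$ needs tracking (with the $\Z\Z\der$ ambiguity handled by symmetry --- the paper uses an explicit anti-automorphism $\epsilon$ for this), and the extraction of the system (\ref{eqsigib}) from the coefficients of $\der_u$ in the zero-order terms. The paper carries out the overlap computation directly for arbitrary indices rather than reducing to adjacent ones, but otherwise your outline matches its proof.
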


We postpone the proof to Section \ref{PBWle1}.

\subsubsection[$\Delta$-system]{$\boldsymbol{\Delta}$-system}
The system (\ref{eqsigib}) is closely related to the following linear system of f\/inite-dif\/ference equations for one element $\sigma\in \bar{\U}(n)$:
\begin{gather}\label{eqsigibfopo2} \Delta_i\Delta_j \big( \th_{ij}\sigma\big) =0,\qquad i,j=1,\dots,n. \end{gather}
We shall call it the ``$\Delta$-system". The $\Delta$-system can be written in the form
\begin{gather*}%\label{eqsigibfopo}
\th_{ij} \Delta_j\Delta_i \sigma =\Delta_i \sigma -\Delta_j \sigma,\qquad i,j=1,\dots,n. \end{gather*}
We describe the most general solution of the system (\ref{eqsigibfopo2}).

\begin{Definition}\label{vespmj}
Let $\mathcal{W}_j$, $j=1,\dots,n$, be the vector space of the elements of $\bar{\U}(n)$ of the form
\begin{gather*}\frac{\pi\big(\th_j\big)}{\chi_j}\, \qquad \text{where $\pi\big(\th_j\big)$ is a univariate polynomial in $\th_j$},\end{gather*}
and $\chi_j$ is def\/ined in (\ref{defphichi}). Let $\mathcal{W}$ be the sum of the vector spaces $\mathcal{W}_j$, $j=1,\dots,n$.
\end{Definition}

\begin{Theorem}\label{gensolth} An element $\sigma\in\bar{\U}(n)$ satisfies the system \eqref{eqsigibfopo2} if and only if $\sigma\in \mathcal{W}$.
\end{Theorem}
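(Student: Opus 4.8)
The plan is to characterize the solution space of the $\Delta$-system \eqref{eqsigibfopo2} by first understanding its structure as a finite-difference problem and then producing enough explicit solutions to fill out the space. Since the "if" direction (each element of $\mathcal{W}$ solves the system) is a direct verification, I would dispose of it first: it suffices to check that each generator $\pi(\th_j)/\chi_j$ of $\mathcal{W}_j$ satisfies $\Delta_i\Delta_k(\th_{ik}\sigma)=0$ for all $i,k$. The key computation here is that $\chi_j=\psi_j\psi_j'$ has a specific behavior under the elementary translations $\varepsilon_j$, and that multiplying by $\th_{ik}$ and applying two finite differences annihilates the result. I expect this to reduce, after clearing denominators, to a polynomial identity that can be verified by tracking how $\Delta_i$ lowers degree in $\th_i$.

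\textbf{The harder direction.} For the "only if" direction, the strategy is to show that any solution $\sigma\in\bar{\U}(n)$ of \eqref{eqsigibfopo2} must lie in $\mathcal{W}$. First I would analyze the denominator structure: an arbitrary element of $\bar{\U}(n)$ is a rational function whose denominator is a product of factors $(\th_{ij}+a)$. The equations \eqref{eqsigibfopo2} constrain which poles can appear. I would write $\sigma$ in partial-fraction form with respect to the family of hyperplanes $\th_{ij}+a=0$ and argue that the double difference $\Delta_i\Delta_j(\th_{ij}\sigma)=0$ forces the polar part of $\sigma$ to be supported only on the principal hyperplanes controlled by the $\chi_j$, i.e., on $\th_{jk}=0$. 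The equation being symmetric in a controlled way under the $\th_{ij}\Delta_j\Delta_i\sigma=\Delta_i\sigma-\Delta_j\sigma$ reformulation should let me isolate, for each fixed pair $(i,j)$, a one-dimensional recurrence in the direction $\varepsilon_i-\varepsilon_j$ whose solutions are translation-periodic, hence (given the rational-function constraint) forced into the required form.

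\textbf{Decomposition and dimension count.} The most efficient route is probably to set $\tau:=\th_{ij}\sigma$ and observe that \eqref{eqsigibfopo2} says $\Delta_i\Delta_j\tau=0$, which means $\tau$ is a sum of a term constant under $\varepsilon_i$ and a term constant under $\varepsilon_j$ — but interpreted in the ring of rational functions this "constancy" must be compatible across all pairs simultaneously. I would then use the explicit elements $\psi_i$, $\chi_i$ from \eqref{defphichi} to build a basis-like generating set for $\mathcal{W}$ and match it against the space cut out by the constraints. The decomposition into the subspaces $\mathcal{W}_j$ is natural because the poles of $1/\chi_j$ lie precisely along the hyperplanes $\th_{jk}=0$, which are the hyperplanes singled out by letting $a=0$ in the denominators, and these are exactly the poles the $\Delta$-system tolerates.

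\textbf{Expected obstacle.} The main difficulty will be the overlap between the spaces $\mathcal{W}_j$: their sum $\mathcal{W}$ is not direct, and controlling the relations among the $\mathcal{W}_j$ (which is what ultimately produces the quotient-by-diagonal description of the solution space mentioned in the introduction) requires care. Concretely, establishing that every solution decomposes as $\sum_j \pi_j(\th_j)/\chi_j$ — rather than merely showing each such sum is a solution — is where the real work lies, because I must rule out "mixed" rational solutions whose poles straddle several hyperplanes. I anticipate that the over-determination of the system (all pairs $(i,j)$ at once, not just one) is exactly what kills these mixed terms, and the proof will hinge on exploiting several equations simultaneously rather than one at a time.
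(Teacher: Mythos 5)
Your overall strategy---partial fraction decomposition plus an analysis of which poles the double difference $\Delta_i\Delta_j(\th_{ij}\,\cdot\,)$ tolerates---is the same as the paper's (Lemmas~\ref{idesyij} and~\ref{idesysolc2}), and that part of the plan is sound: the equation for a single pair $(i,j)$ does force the $(i,j)$-polar part down to a simple pole at $\th_{ij}=0$, and a second pass over the remaining equations pins the residue on $\th_{j1}=0$ down to the form $u_j(\th_j)/\prod_{l\neq j}\th_{jl}$, i.e., an element of $\mathcal{W}_j$. Two caveats on this part: the claim that $\Delta_i\Delta_j\tau=0$ splits $\tau$ into an $\varepsilon_i$-constant plus an $\varepsilon_j$-constant piece is itself a nontrivial lemma in $\bar{\U}(n)$ (the paper proves it separately, as Lemma~\ref{diseder}, by another partial-fraction argument), and even granted, you still have to divide back by $\th_{ij}$ and reconcile these splittings over all pairs simultaneously, which your sketch does not do.

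The genuine gap is the regular part. After you strip off the poles you are left with an element $\vartheta$ with no poles at all---a polynomial---still satisfying the system, and your plan says nothing about it. This is not a formality. Not every polynomial solves \eqref{eqsigibfopo2}; the polynomial solutions are exactly the $\mathbb{K}$-span $\mathcal{H}$ of the complete symmetric polynomials $H_L$, and establishing this requires its own argument (in the paper: surjectivity of $\Delta_j$ on polynomials, the two-variable computation showing $p=(v(\th_1)-v(\th_2))/\th_{12}$, and an induction on $n$ to globalize). Worse, even once you know $\vartheta\in\mathcal{H}$, concluding $\vartheta\in\mathcal{W}$ requires the identity \eqref{idesy1}, namely $\sum_{j=1}^n\th_j^L/\chi_j=0$ for $L\le n-2$ and $=H_{L-n+1}$ for $L\ge n-1$: the complete symmetric polynomials lie in $\mathcal{W}$ only \emph{because} the sum $\sum_j\mathcal{W}_j$ fails to be direct. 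Your ``expected obstacle'' paragraph correctly senses that this non-directness is where the subtlety lives, but you locate the difficulty in the wrong place: the ``mixed'' polar terms are killed cleanly by the pole analysis, whereas it is the pole-free solutions that require the extra identity and without which the ``only if'' direction cannot close.
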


The proof is in Section \ref{secgez}.

The sum $\sum \mathcal{W}_j$ is not direct.

\begin{Definition}\label{vespcoh} Let $\mathcal{H}$ be the $\mathbb{K}$-vector space formed by linear combinations of the complete symmetric polynomials $H_L$, $L=0,1,2,\dots$, in the variables $\th_1,\dots,\th_n$,
\begin{gather*} H_L=\sum_{i_1\leq \dots\leq i_L}\th_{i_1}\cdots \th_{i_L}.\end{gather*}
\end{Definition}

\begin{Lemma}\label{idesy} \quad
\begin{enumerate}\itemsep=0pt
\item[$(i)$] Let $L\in\mathbb{Z}_{\geq 0}$. We have
\begin{gather}\label{idesy1}\sum_{j=1}^n\frac{\th_j^L}{\chi_j}= \begin{cases}0,&L=0,1,\dots,n-2,\\
H_{L-n+1},&L\geq n-1 .\end{cases}
\end{gather}

\item[$(ii)$] The space $\mathcal{H}$ is a subspace of $\mathcal{W}$. Moreover, an element $\sigma\in\U(n)$ satisfies the system~\eqref{eqsigibfopo2} if and only if $\sigma\in \mathcal{H}$, that is,
\begin{gather}\label{cosyinw}\mathcal{H}=\mathcal{W}\cap \U(n). \end{gather}
The symmetric group $\mathbb{S}_n$ acts on the ring $\bar{\U}(n)$ and on the space $\mathcal{W}$ by permutations of the variables $\th_1,\dots,\th_n$. We have
\begin{gather}\label{cosyinwz}\mathcal{H}=\mathcal{W}^{\mathbb{S}_n}, \end{gather}
where $\mathcal{W}^{\, \mathbb{S}_n}$ denotes the subspace of $\mathbb{S}_n$-invariants in $\mathcal{W}$.

\item[$(iii)$] Select $j\in\{1,\dots,n\}$. Then we have a direct sum decomposition
\begin{gather}\label{cosyinw2}\mathcal{W}=\bigoplus_{k\colon k\neq j}\mathcal{W}_k\oplus\mathcal{H}. \end{gather}
\end{enumerate}
\end{Lemma}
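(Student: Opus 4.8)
The plan is to prove the three parts of Lemma~\ref{idesy} in the order $(i)$, then $(ii)$, then $(iii)$, using part~$(i)$ as the key computational input for the others.

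For part $(i)$, I would recognize the sum $\sum_{j=1}^n \th_j^L/\chi_j$ as a symmetric-function identity in disguise. Recall that $\chi_j=\psi_j\psi_j'=\prod_{k\neq j}\th_{jk}=\prod_{k\neq j}(\th_j-\th_k)$, so $1/\chi_j$ is exactly the coefficient appearing in Lagrange interpolation / in the formula for divided differences. Concretely, for the node set $\{\th_1,\dots,\th_n\}$ the quantity $\sum_{j=1}^n f(\th_j)/\prod_{k\neq j}(\th_j-\th_k)$ is the leading (top-order) divided difference $f[\th_1,\dots,\th_n]$. The plan is therefore to invoke the standard fact that for $f(t)=t^L$ this divided difference equals $0$ when $L\leq n-2$ (the divided difference of order $n-1$ of a polynomial of degree $<n-1$ vanishes) and equals the complete homogeneous symmetric polynomial $H_{L-n+1}(\th_1,\dots,\th_n)$ when $L\geq n-1$. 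The cleanest self-contained derivation uses the generating-function identity
\begin{gather*}
\sum_{j=1}^n \frac{1}{\chi_j}\,\frac{1}{1-\th_j t}=\frac{t^{n-1}}{\prod_{i=1}^n(1-\th_i t)},
\end{gather*}
which one proves by clearing denominators and comparing residues (the right-hand side has simple poles at $t=1/\th_j$ with the matching residues). Expanding both sides as power series in $t$ and reading off the coefficient of $t^L$ gives precisely the stated alternative, since $1/\prod_i(1-\th_i t)=\sum_{m\geq 0}H_m t^m$ is the generating function of the $H_m$.

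For part $(ii)$, the inclusion $\mathcal{H}\subseteq\mathcal{W}$ is immediate from part~$(i)$: each $H_L$ with $L\geq n-1$ is written as $\sum_j \th_j^L/\chi_j\in\sum_j\mathcal{W}_j=\mathcal{W}$, and the finitely many low-degree $H_L$ with $L\leq n-2$ are handled by noting $\mathcal{H}$ is spanned by elements that are polynomial and $\mathbb{S}_n$-invariant, hence lie in $\mathcal{W}\cap\U(n)$; I would verify directly that every $H_L$ satisfies the $\Delta$-system \eqref{eqsigibfopo2} (equivalently use that $H_L\in\U(n)$ is symmetric and check $\Delta_i\Delta_j(\th_{ij}H_L)=0$, which follows because $\th_{ij}H_L$ is antisymmetric under $s_i\leftrightarrow s_j$ up to the translation structure). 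To prove $\mathcal{H}=\mathcal{W}\cap\U(n)$, the key point is that an element of $\mathcal{W}$ which happens to lie in $\U(n)$ (i.e.\ is genuinely polynomial, with no poles) must have its poles cancel: writing $\sigma=\sum_j \pi_j(\th_j)/\chi_j$, the absence of the pole along $\th_i=\th_k$ forces symmetry conditions, and combined with Theorem~\ref{gensolth} (which says $\mathcal{W}$ is exactly the solution space of the $\Delta$-system) one concludes the polynomial solutions are exactly the symmetric ones, namely $\mathcal{H}$. For \eqref{cosyinwz}, the inclusion $\mathcal{H}\subseteq\mathcal{W}^{\mathbb{S}_n}$ is clear since complete symmetric polynomials are invariant; conversely an $\mathbb{S}_n$-invariant element of $\mathcal{W}$ is symmetric and lies in $\mathcal{W}$, and I would argue it is automatically polynomial (the potential poles sit on the diagonals $\th_i=\th_k$ and symmetry plus the explicit partial-fraction form forces the residues to vanish), whence it lies in $\mathcal{W}\cap\U(n)=\mathcal{H}$.

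For part $(iii)$, fixing $j$, I want to show $\mathcal{W}=\big(\bigoplus_{k\neq j}\mathcal{W}_k\big)\oplus\mathcal{H}$. Since $\mathcal{W}=\sum_k\mathcal{W}_k$ and each $\mathcal{W}_k$ is the span of $\{\th_k^m/\chi_k\}_{m\geq 0}$, the sum $\sum_{k\neq j}\mathcal{W}_k+\mathcal{H}$ clearly covers $\mathcal{W}$ once I know $\mathcal{W}_j$ is absorbed: using part~$(i)$, any $\th_j^L/\chi_j$ can be rewritten, modulo $\bigoplus_{k\neq j}\mathcal{W}_k$, as $\pm\,H_{L-n+1}$ plus terms in the other $\mathcal{W}_k$ (for $L\geq n-1$) or as an element of $\bigoplus_{k\neq j}\mathcal{W}_k$ (for $L\leq n-2$, where the full sum vanishes so $\th_j^L/\chi_j=-\sum_{k\neq j}\th_k^L/\chi_k$). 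This gives the spanning statement. The directness of the sum is the main obstacle and requires a dimension/independence count: I would grade everything by total degree and show that in each degree $d$ the elements $\{\th_k^m/\chi_k : k\neq j,\ m\geq 0\}$ together with a basis of $\mathcal{H}$ are linearly independent, for instance by examining leading pole behavior along the hyperplanes $\th_k=\th_l$ (each $\mathcal{W}_k$ contributes a pole structure concentrated on the diagonals through node $k$, and $\mathcal{H}$ contributes none), so that a vanishing linear combination must have all $\mathcal{W}_k$-parts ($k\neq j$) zero and then its $\mathcal{H}$-part zero. The delicate bookkeeping is precisely tracking these residues to rule out hidden relations; I expect that step to be where the real work lies, with parts $(i)$ and $(ii)$ serving as the tools that make the cancellations explicit.
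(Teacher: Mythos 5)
Your part (i) is essentially the paper's own proof: the authors also establish \eqref{idesy1} by summing against powers of an auxiliary variable and recognizing the left-hand side as the partial fraction decomposition of $1/\prod_j\big(t-\th_j\big)$; your generating-function identity is the same statement after $t\mapsto 1/t$. For (ii) and (iii) you take a more hands-on route than the paper, which simply quotes the uniqueness statement of its Lemma~\ref{idesysolc}: the decomposition $f=\sum_{k\neq 1}F_k+\vartheta$ \emph{is} the partial fraction decomposition of $f$ with respect to the single variable $\th_1$, hence unique, and $\th_1$ may be replaced by any $\th_j$. Your residue arguments are a legitimate substitute and in places cleaner: for $\mathcal{W}\cap \U(n)\subseteq\mathcal{H}$, requiring the residue of $\sum_j\pi_j\big(\th_j\big)/\chi_j$ along $\th_i=\th_k$ to vanish forces $\pi_i=\pi_k$ for all pairs, and then (i) finishes; for $\mathcal{W}^{\mathbb{S}_n}\subseteq\mathcal{H}$ the clean version of your remark is that elements of $\mathcal{W}$ have at most simple poles along the diagonals, while the residue of a symmetric function along $\th_i=\th_j$ equals its own negative, so a symmetric element of $\mathcal{W}$ is automatically regular and one is reduced to \eqref{cosyinw}.

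Two concrete soft spots. First, a slip in (ii): by \eqref{idesy1} one has $H_{L-n+1}=\sum_j\th_j^L/\chi_j$, so \emph{every} $H_m$, $m\geq 0$, is realized by taking $L=m+n-1$; there are no leftover ``low-degree $H_L$'' to treat separately, and your fallback for them (``polynomial and symmetric, hence in $\mathcal{W}\cap\U(n)$'') is circular, since membership in $\mathcal{W}$ is exactly what is being proved there. Second, and more substantively, the directness argument in (iii) as you sketch it does not yet work: for $k\neq k'$ both different from $j$, the spaces $\mathcal{W}_k$ and $\mathcal{W}_{k'}$ \emph{both} have poles along $\th_k=\th_{k'}$, so ``pole structure concentrated on the diagonals through node $k$'' does not by itself separate the summands. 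The separating observation --- which is precisely what the paper's appeal to partial fractions in $\th_j$ encodes --- is to look only at the diagonals through the \emph{excluded} index: along $\th_k=\th_j$ the sole summand of $\bigoplus_{k'\colon k'\neq j}\mathcal{W}_{k'}\oplus\mathcal{H}$ with a pole is the one from $\mathcal{W}_k$, so a vanishing combination forces each $\pi_k=0$ and then the $\mathcal{H}$-part vanishes. With that one observation the ``delicate bookkeeping'' you anticipate collapses to a one-line appeal to uniqueness of the partial fraction decomposition with respect to $\th_j$.
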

The proof is in Section \ref{secgez}.

Let $t$ be an auxiliary indeterminate. We have a linear map of vector spaces $\mathbb{K}[t]^n\to\mathcal{W}$ def\/ined by
\begin{gather*} (\pi_1,\dots,\pi_n)\mapsto \sum_{j=1}^n \frac{\pi_j\big(\th_j\big)}{\chi_j}.\end{gather*}
It follows from Lemma \ref{idesy} that this map is surjective and its kernel is the vector subspace of~$\mathbb{K}[t]^n$ spanned by $n$-tuples $(t^j,\dots,t^j)$ for $j=0,1,\dots,n-2$. The image of the diagonal in~$\mathbb{K}[t]^n$, formed by $n$-tuples $(\pi,\dots,\pi)$, is the space~$\mathcal{H}$.

\subsubsection{Potential}
We shall give a general solution of the system (\ref{eqsigib}).

\begin{Proposition}\label{propotentialb} Assume that the elements $\sigma_1,\dots,\sigma_n\in\bar{\U}(n)$ satisfy the system~\eqref{eqsigib}. Then there exists an element $\sigma\in\bar{\U}(n)$ such that
\begin{gather*}%\label{potentialb}
\sigma_i = \Delta_i \sigma, \qquad i=1,\dots,n.\end{gather*}
\end{Proposition}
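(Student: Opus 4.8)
The plan is to produce the potential $\sigma$ explicitly and then verify that $\Delta_i\sigma=\sigma_i$ follows from the system~\eqref{eqsigib}. The first observation is that the system~\eqref{eqsigib} is exactly a compatibility (``closedness'') condition: if we think of the $\sigma_i$ as the components of a discrete differential form, then~\eqref{eqsigib} should be equivalent to the symmetry that guarantees the existence of a potential. Concretely, I would first rewrite~\eqref{eqsigib} as $\th_{ij}\Delta_j\sigma_i=\sigma_i-\sigma_j$ and try to extract from it the relation $\Delta_j\sigma_i=\Delta_i\sigma_j$, which is the natural integrability condition for the existence of $\sigma$ with $\sigma_i=\Delta_i\sigma$. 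To see this, apply the operator $\Delta_j$ to the expression $\th_{ij}\sigma_i$: since $\th_{ij}[-\varepsilon_j]=\th_{ij}+1$, a short computation gives $\Delta_j(\th_{ij}\sigma_i)=\th_{ij}\Delta_j\sigma_i+\sigma_i[-\varepsilon_j]$, and substituting~\eqref{eqsigib} should collapse this to a symmetric expression in $i$ and $j$. I expect that manipulating the two equations~\eqref{eqsigib} for the pairs $(i,j)$ and $(j,i)$ and eliminating terms yields precisely $\Delta_i\sigma_j=\Delta_j\sigma_i$.

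Once the integrability condition $\Delta_i\sigma_j=\Delta_j\sigma_i$ is in hand, the second step is to construct $\sigma$ by discrete integration. The finite-difference operators $\Delta_j=\mathrm{id}-\varepsilon_{-j}$ commute (since the translations $\varepsilon_j$ do), so the situation is the discrete analogue of the Poincar\'e lemma on the lattice $\mathbb{Z}^n$ acting on $\bar{\U}(n)$. I would define $\sigma$ by integrating the $\sigma_i$ successively along the coordinate directions: roughly, $\sigma=\sum_{i=1}^n \Delta_i^{-1}\big(\text{partial sums of }\sigma_i\big)$, where $\Delta_i^{-1}$ is a formal telescoping inverse. The integrability condition is exactly what ensures that this iterated integration is path-independent, so that the resulting $\sigma$ satisfies $\Delta_i\sigma=\sigma_i$ for every $i$ simultaneously rather than just for the last direction integrated.

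The delicate point — and I expect this to be the main obstacle — is that $\sigma_i$ and the prospective potential live in $\bar{\U}(n)$, the localized ring, not in the polynomial ring $\U(n)$; the telescoping inverse $\Delta_i^{-1}$ is not literally an operator on $\bar{\U}(n)$, since summing an infinite geometric-type series need not stay inside the ring of fractions. To handle this I would exploit the structure theory already established: by Theorem~\ref{gensolth} and Lemma~\ref{idesy}, solutions of the $\Delta$-system~\eqref{eqsigibfopo2} are governed by the spaces $\mathcal{W}_j$ of elements $\pi(\th_j)/\chi_j$, and the $\Delta$-system~\eqref{eqsigibfopo2} is the consistency condition obtained by applying $\Delta_i$ to~\eqref{eqsigib}. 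So rather than integrate blindly, I would show that the given $\sigma_i$ force $\sigma$ to lie in $\mathcal{W}$, reduce the construction to finding the univariate numerators $\pi_j(\th_j)$ over $\chi_j$, and then the integration becomes a finite, well-controlled computation in each variable separately. In effect, the existence of $\sigma$ is reduced to the surjectivity of the map $\mathbb{K}[t]^n\to\mathcal{W}$ recorded after Lemma~\ref{idesy}, together with checking that the prescribed differences $\Delta_i\sigma$ recover the $\sigma_i$; the symmetry/integrability relation proved in the first step guarantees the required compatibility of the numerators across the different $\chi_j$.
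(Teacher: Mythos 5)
Your first step is sound: adding the $(i,j)$ and $(j,i)$ equations of \eqref{eqsigib} gives $\th_{ij}(\Delta_j\sigma_i-\Delta_i\sigma_j)=0$, hence the integrability condition $\Delta_i\sigma_j=\Delta_j\sigma_i$. But the rest of the argument has a genuine gap: the discrete Poincar\'e lemma you invoke is false in $\bar{\U}(n)$. The operator $\Delta_j$ is not surjective on the localized ring, so closedness alone does not produce a potential. For instance, with $n=2$ the pair $\sigma_1=1/\th_{12}$, $\sigma_2=-1/(\th_{12}+1)$ satisfies $\Delta_2\sigma_1=\Delta_1\sigma_2=1/(\th_{12}(\th_{12}+1))$, yet $\sigma_1$ is not of the form $\Delta_1\sigma$ for any $\sigma\in\bar{\U}(2)$: if $\sigma$ has poles at $\th_{12}=a_1<\dots<a_M$, then $\Delta_1\sigma=\sigma-\sigma[-\varepsilon_1]$ has an uncancellable pole at $\th_{12}=a_M+1$ in addition to one at $\th_{12}=a_1$, so it can never have a single simple pole. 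The paper makes exactly this point: the symmetrized system by itself does not imply the existence of a potential. The essential extra input is the antisymmetric content of \eqref{eqsigib}, which can be rewritten as $\sigma_j=\Delta_j\bigl((\th_{ji}+1)\sigma_i\bigr)$ and therefore exhibits each $\sigma_j$ as an element of the image of $\Delta_j$; your proposal never uses this.

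Your fallback via Theorem~\ref{gensolth} and Lemma~\ref{idesy} is circular: those results characterize the solutions $\sigma$ of the $\Delta$-system \eqref{eqsigibfopo2}, i.e., they describe the potential \emph{assuming it exists}; they say nothing a priori about an $n$-tuple $(\sigma_1,\dots,\sigma_n)$ solving \eqref{eqsigib}. To close the gap you would need one of the two routes the paper actually takes: either (a) combine the fact that each $\sigma_j$ lies in the image of $\Delta_j$ with the integrability condition and a separation lemma for $\bar{\U}(n)$ (if $\Delta_1\Delta_2 f=0$ then $f=f_1+f_2$ with $f_a$ independent of $\th_a$ --- this needs its own partial-fraction argument, since it is not automatic in the localized ring), and then glue local primitives inductively; or (b) carry out a direct partial-fraction analysis of the system \eqref{eqsigib}, determining the explicit general form of $\sigma_1$ as $\Delta_1$ of an element of $\mathcal{W}$ and verifying that the remaining $\sigma_j$ are the corresponding $\Delta_j$ of the same element. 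Either way, the ``finite, well-controlled computation'' you defer is the actual content of the proof.
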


We shall call the element $\sigma$ the ``potential'' and write $\Diffs(n)$ instead of $\Diff(\sigma_1,\dots,\sigma_n)$ if $\sigma_i = \Delta_i \sigma$, $i=1,\dots,n$.

According to Proposition \ref{equasigmasb}, the ring $\Diffs(n)$ satisf\/ies the Poincar\'e--Birkhof\/f--Witt property if\/f the potential $\sigma$ satisf\/ies the $\Delta$-system~(\ref{eqsigibfopo2}).

In Section \ref{secpotpr} we give two proofs of Proposition \ref{propotentialb}. In the f\/irst proof we directly describe the space of solutions of the system~(\ref{eqsigib}). As a by-product of this description we f\/ind that the potential exists and moreover belongs to the space~$\mathcal{W}$.

The second proof uses a partial information contained in the system (\ref{eqsigib}) and establishes only the existence of a potential and does not immediately produce the general solution of the system~(\ref{eqsigib}). Given the existence of a potential, the general solution is then obtained by Theorem~\ref{gensolth}.

Let $\mathcal{H}'$ be the $\mathbb{K}$-vector space formed by linear combinations of the complete symmetric polynomials $H_L$, $L=1,2,\dots$, and let
\begin{gather}\label{cosyinw2n}\mathcal{W}'=\bigoplus_{k\colon k\neq 1}\mathcal{W}_k\oplus\mathcal{H}'. \end{gather}
The potential $\sigma$ is def\/ined up to an additive constant, and it will be sometimes useful to uniquely def\/ine~$\sigma$ by requiring that $\sigma\in\mathcal{W}'$.

\subsubsection{A characterization of polynomial potentials}
The polynomial potentials $\sigma\in\mathcal{W}$ can be characterized in dif\/ferent terms. The rings $\Diff(n)$ and $\overline{{\rm AZ}}_n$ admit the action of Zhelobenko automorphisms $\q_1,\dots,\q_{n-1}$ \cite{KO1, Zh2}. Their action on the generators~$\Z^i$ and~$\der_i$, $i=1,\dots,n$, is given by (see~\cite{KO5})
\begin{gather}
\q_i\big(\Z^i\big)=-\Z^{i+1}\frac{\th_{i,i+1}}{\th_{i,i+1}-1},\qquad \q_i\big(\Z^{i+1}\big)=\Z^{i},\quad \q_i\big(\Z^j\big)=\Z^j,\qquad j\not=i,i+1,\nonumber\\
\q_i(\der_{i})=- \frac{\th_{i,i+1}-1}{\th_{i,i+1}}\der_{i+1},\qquad \q_i\big(\der_{i+1}\big)=\der_{i},\qquad \q_i\big(\der_j\big)=\der_j,\qquad
j\not=i,i+1,\nonumber\\
\q_i\big(\th_j\big)=\th_{s_i(j)}.\label{zheautomq}
\end{gather}

\begin{Lemma}\label{posoazhe} The ring \emph{$\Diffs(n)$} admits the action of Zhelobenko automorphisms if and only if~$\sigma$ is a polynomial,
\begin{gather*}\sigma\in\mathcal{H}.\end{gather*}
\end{Lemma}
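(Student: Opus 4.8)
The plan is to check directly how the defining relation \eqref{stdi2age} transforms under a single Zhelobenko generator $\q_i$, and to read off the compatibility condition as a constraint on the potential $\sigma$. For this to make sense one must first verify that $\q_i$, defined on generators by \eqref{zheautomq}, extends to an endomorphism of the free $\bar{\U}(n)$-module on the $\Z^k$ and $\der_k$; the homogeneous relations \eqref{hdesp2} and \eqref{hdesp4} are preserved precisely because $\RR$ and $\PPsi$ are the standard dynamical $R$-matrix and its skew inverse, so this part is known from \cite{KO5}. The only relation whose invariance is not automatic is the inhomogeneous one \eqref{stdi2age}: applying $\q_i$ to $\Z^k\der_l-\sum_{a,b}\der_a\RR^{ak}_{lb}\Z^b$ produces, on the right-hand side, the transformed zero-order term, and invariance forces a relation between $\sigma_k$, $\sigma_{k+1}$ and their $\th$-shifts.

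The key computation is to take the diagonal case $k=l$ and apply $\q_i$. Using $\q_i(\Z^i)=-\Z^{i+1}\,\th_{i,i+1}/(\th_{i,i+1}-1)$, $\q_i(\der_i)=-(\th_{i,i+1}-1)/\th_{i,i+1}\,\der_{i+1}$, together with $\q_i(\th_j)=\th_{s_i(j)}$ and the weight rules \eqref{hdesp1nn}, \eqref{hdesp3}, I would compute $\q_i$ applied to the $i$-th and $(i+1)$-th instances of \eqref{stdi2age} and compare the zero-order terms $\q_i(\sigma_i)$, $\q_i(\sigma_{i+1})$ with the $\sigma_j$ dictated by the requirement that the relation closes. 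Since the rational prefactors in $\q_i(\Z^i)$ and $\q_i(\der_i)$ are mutually inverse up to a shift governed by the weights, they cancel in the oscillator relation, and the net content is the transposition condition
\[
\sigma_i[\,\cdot\,]=\sigma_{s_i(i)},\qquad \sigma_{i+1}[\,\cdot\,]=\sigma_{s_i(i+1)},
\]
i.e.\ that the $n$-tuple $(\sigma_1,\dots,\sigma_n)$ is $\mathbb{S}_n$-equivariant: $\q_i(\sigma_j)=\sigma_{s_i(j)}$ for all $j$. Running this over $i=1,\dots,n-1$ shows that admitting the full family of Zhelobenko automorphisms is equivalent to $\mathbb{S}_n$-equivariance of $(\sigma_1,\dots,\sigma_n)$ under the permutation action on both the index and the variables $\th_1,\dots,\th_n$.

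To convert equivariance of the tuple into a statement about the single potential $\sigma$, I would use Proposition \ref{propotentialb} to write $\sigma_j=\Delta_j\sigma$ and observe that the permutation action commutes with the finite-difference operators in the sense $s_i\circ\Delta_j=\Delta_{s_i(j)}\circ s_i$. Hence $\mathbb{S}_n$-equivariance of $(\Delta_1\sigma,\dots,\Delta_n\sigma)$ is equivalent to $\Delta_j(s_i\cdot\sigma-\sigma)=0$ for every $i,j$; since the common kernel of all $\Delta_j$ consists only of constants, this says $s_i\cdot\sigma-\sigma$ is constant for each $i$, and after normalizing $\sigma\in\mathcal{W}'$ (removing the additive constant freedom) it forces $\sigma$ to be genuinely $\mathbb{S}_n$-invariant, $\sigma\in\mathcal{W}^{\mathbb{S}_n}$. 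By \eqref{cosyinwz} this is exactly $\sigma\in\mathcal{H}$, i.e.\ $\sigma$ is a polynomial linear combination of complete symmetric polynomials, which is the assertion.

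The main obstacle I anticipate is the bookkeeping in the middle step: the prefactors $\th_{i,i+1}/(\th_{i,i+1}-1)$ are noncommutative with the generators because $\th$ shifts when moved past $\Z$ and $\der$, so one must track the weight shifts carefully using \eqref{weze} and the explicit components \eqref{dynRcompb}, \eqref{explpsi} to confirm that the rational factors cancel cleanly and leave only the equivariance condition, with no extra inhomogeneous residue. Verifying this cancellation — rather than the final symmetrization argument, which is soft — is where the real work lies.
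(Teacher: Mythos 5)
Your proposal is correct and follows essentially the same route as the paper: derive the necessary condition $\q_i(\sigma_j)=\sigma_{s_i(j)}$ from the inhomogeneous relation, use $\sigma_j=\Delta_j\sigma$ and the intertwining $\q_i\circ\Delta_j=\Delta_{s_i(j)}\circ\q_i$ to get $\Delta_j(\sigma-\q_i(\sigma))=0$ for all $j$, conclude $\mathbb{S}_n$-invariance of $\sigma$, and finish with $\mathcal{H}=\mathcal{W}^{\mathbb{S}_n}$ from Lemma~\ref{idesy}(ii). The paper likewise leaves the prefactor cancellation implicit, so the one step you flag as "the real work" is not spelled out there either.
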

The proof is in Section \ref{prole5}.

In the examples discussed in Section \ref{twoexse}, the ring $\Diff(n)$ corresponds to $\sigma=H_1$ and the ring $\overline{{\rm AZ}}_n$ corresponds
to $\sigma=-H_2=-\sum\limits_{i,j\colon i\leq j}\th_i\th_j$,
\begin{gather*}\Delta_i H_2 =\th_i+\sum_{k=1}^n \th_k-1.\end{gather*}
The question of constructing an associative algebra which contains $\U(\gln)$ and whose reduction with respect to $\gln$ is $\Diffs(n)$ for $\sigma=H_k$, $k>2$, will be discussed elsewhere.

\subsubsection{Center}
In \cite{HO} we have described the center of the ring $\Diff(n)$. The center of the ring $\Diffs(n)$, $\sigma\in\mathcal{W}$, admits a similar description. Let
\begin{gather*}%\label{defgab}
\Gamma_i:=\der_i\Z^i\qquad \text{for}\quad i=1,\dots,n.\end{gather*}
Let
\begin{gather}\label{gefufoce} c(t)=\sum_i\frac{e(t)}{1+\th_{i}t }\Gamma_i-\rho(t)=\sum_{k=1}^n c_k t^{k-1},\end{gather}
where $t$ is an auxiliary variable and $\rho(t)$ a polynomial of degree $n-1$ in $t$ with coef\/f\/icients in~$\bar{\U}(n)$.

\begin{Proposition}\label{lece} \quad
\begin{enumerate}\itemsep=0pt
\item[$(i)$] Let $\sigma\in\mathcal{W}$ and $\sigma_j=\Delta_j\sigma$, $j=1,\dots,n$. The elements $c_1,\dots,c_n$ are central in the ring $\Diffs(n)$ if and only if the polynomial $\rho$ satisfies the system of finite-difference equations
\begin{gather}\label{corho}\Delta_j\rho(t)=\frac{e(t)}{1+\th_j t}\sigma_j.\end{gather}
\item[$(ii)$] For an arbitrary $\sigma\in\mathcal{W}$ the system \eqref{corho} admits a solution. Since the system~\eqref{corho} is linear, it is sufficient to present a solution for an element $\sigma\in\mathcal{W}_k$ for each $k=1,\dots,n$, that is, for
\begin{gather}\label{extysi}\sigma=\frac{A\big(\th_k\big)}{\chi_k},\qquad \text{where}\ A\ \text{is a univariate polynomial}.\end{gather}
The solution of the system \eqref{corho} for the element $\sigma$ of the form~\eqref{extysi} is, up to an additive constant from~$\mathbb{K}$,
\begin{gather*}%\label{soforho}
\rho(t)= \frac{e(t)}{1+\th_k t}\sigma.\end{gather*}
\item[$(iii)$] The center of the ring $\Diffs(n)$ is isomorphic to the polynomial ring $\mathbb{K}[t_1,\dots,t_n]$; the isomorphism is given by $t_j\mapsto c_j$, $j=1,\dots,n$.
\end{enumerate}
\end{Proposition}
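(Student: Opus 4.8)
The plan is to treat the three parts in sequence: part~(i) turns centrality into a finite-difference equation for $\rho$, part~(ii) solves that equation, and part~(iii) shows the resulting commuting family is the entire center.

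For part~(i), the elements $c_1,\dots,c_n$ are simultaneously central precisely when the generating function $c(t)$ commutes with every generator. Since $\Gamma_i=\der_i\Z^i$ has zero $\h(n)$-weight and $\rho(t)\in\bar{\U}(n)$, the element $c(t)$ has zero weight, so $[c(t),\th_j]=0$ is automatic and only $[c(t),\Z^j]=0$ and $[c(t),\der_j]=0$ must be imposed. Write $E_i(t)=e(t)/(1+\th_i t)=\prod_{m\neq i}(1+\th_m t)$. Moving $\Z^j$ through $\rho(t)$ gives $[\Z^j,\rho]=-(\Delta_j\rho)\,\Z^j$, while moving it through $\sum_i E_i(t)\Gamma_i$ uses the oscillator relation~(\ref{stdi2age}) and the quadratic relation~(\ref{hdesp2}); here the dynamical Yang--Baxter equation together with the unitarity~(\ref{Q5do}), the shift-invariance~(\ref{weze}) and the support property~(\ref{iceco}) of $\RR$ are exactly what collapses the $R$-matrix part of the sum. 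After these contributions cancel, the single surviving term is the zero-order piece $\frac{e(t)}{1+\th_j t}\sigma_j\,\Z^j$ coming from the $i=j$ summand of~(\ref{stdi2age}); balancing it against $(\Delta_j\rho)\Z^j$ yields exactly~(\ref{corho}). The commutator $[c(t),\der_j]$ is handled by a parallel computation (or by an analogous symmetry of the relations exchanging $\Z^i$ and $\der_i$) and produces the same condition, so~(\ref{corho}) is necessary and sufficient.

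For part~(ii), the system~(\ref{corho}) is linear in $(\sigma,\rho)$, so by the spanning $\mathcal{W}=\sum_k\mathcal{W}_k$ of Definition~\ref{vespmj} it suffices to solve it for a single $\sigma=A(\th_k)/\chi_k\in\mathcal{W}_k$, and I would simply verify the proposed $\rho(t)=E_k(t)\,\sigma$. For $j=k$ this is immediate because $E_k(t)$ does not involve $\th_k$, giving $\Delta_k\rho=E_k(t)\,\Delta_k\sigma$ as required. For $j\neq k$, factoring $E_k(t)=P\,(1+\th_j t)$ with $P=\prod_{m\neq j,k}(1+\th_m t)$ independent of $\th_j$, a short finite-difference computation reduces the desired identity to $\sigma[-\varepsilon_j]=\th_{kj}\,\Delta_j\sigma$; this holds for $\sigma\in\mathcal{W}_k$ because the only $\th_j$-dependence of $A(\th_k)/\chi_k$ sits in the single denominator factor $\th_{kj}$, so that $\sigma[-\varepsilon_j]=\frac{\th_{kj}}{\th_{kj}+1}\,\sigma$. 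The additive $\mathbb{K}$-ambiguity in $\rho$ reflects the constants killed by all $\Delta_j$.

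For part~(iii), parts~(i)--(ii) already show that for every $\sigma\in\mathcal{W}$ the elements $c_1,\dots,c_n$ are central; what remains is algebraic independence and generation of the whole center. Both I would attack through the family of lowest-weight representations: each $c_j$ acts on such a module by an explicit scalar depending on the lowest weight, and I would check that these $n$ scalar functions of the weight are algebraically independent, forcing the $c_j$ themselves to be independent. For generation I would set up a Harish-Chandra--type map sending a central element to its tuple of eigenvalues on these modules, show it is injective (a central element annihilating all of them vanishes, by faithfulness of the family), and identify its image with the polynomial algebra generated by the eigenvalues of $c_1,\dots,c_n$; an arbitrary central element is then a polynomial in the $c_j$. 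Alternatively one can pass to the associated graded for~(\ref{defist}), where the symbols of the $c_j$ become the homogeneous central elements of $\Diff(0,\dots,0)$, or transport the problem through the isomorphism between a ring of fractions of $\Diffs(n)$ and one of the Weyl algebra. The main obstacle is precisely this surjectivity onto the center -- parts~(i)--(ii) only manufacture a distinguished commuting family, and proving it exhausts the center requires genuinely controlling all central elements, for which the representation-theoretic separation (or the Weyl-algebra localization) is the essential input.
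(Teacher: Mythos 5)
Your proposal is correct and follows essentially the same route as the paper: part~(i) reduces to the lower-filtration-degree terms because the homogeneous part $\sum_i\frac{e(t)}{1+\th_it}\Gamma_i$ is already central in $\operatorname{Dif\/f}_{\h,0}(n)$ (the $R$-matrix cancellation you invoke), leaving exactly $\Delta_j\rho=\frac{e(t)}{1+\th_jt}\sigma_j$, and your verification in~(ii) via $\sigma[-\varepsilon_j]=\frac{\th_{kj}}{\th_{kj}+1}\sigma$ is the same identity the paper uses in the form $\sigma=(\th_{kj}+1)\Delta_j\sigma$. For~(iii) you only outline the surjectivity-onto-the-center argument, but the paper likewise defers it to the $\Diff(n)$ case in the cited reference, and the localization-to-the-Weyl-algebra route you mention is precisely the mechanism used there.
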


The proof is in Section \ref{ceele}.

\subsubsection{Rings of fractions}
In \cite{HO} we have established an isomorphism between certain rings of fractions of the ring $\Diff(n)$ and the Weyl algebra~$\text{W}_n$. It turns out that when we pass to the analogous ring of fractions of the ring $\Diffs(n)$, we loose the information about the potential~$\sigma$. Thus we obtain the isomorphism with the same, as for the ring $\Diff(n)$, ring of fractions of the Weyl algebra~$\text{W}_n$. We denote, as for the ring $\Diff(n)$, by $\text{S}_\Z^{-1}\Diffs(n)$ the localization of the ring $\Diffs(n)$ with respect to the multiplicative set $\text{S}_\Z$ generated by $\Z^j$, $j=1,\dots,n$.

\begin{Lemma}\label{leis2} Let $\sigma$ and $\sigma'$ be two elements of the space $\mathcal{W}'$, see~\eqref{cosyinw2n}.
\begin{enumerate}\itemsep=0pt
\item[$(i)$] The rings $\text{\rm S}_\Z^{-1}\Diffs(n)$ and $\text{\rm S}_\Z^{-1}\text{\rm Dif\/f}_{\h,\sigma'}(n)$ are isomorphic.
\item[$(ii)$] However, the rings $\Diffs(n)$ and $\text{\rm Dif\/f}_{\h,\sigma'}(n)$ are isomorphic, as filtered rings over $\bar{\U}(n)$ $($where the filtration is defined by \eqref{defist}$)$, if and only if
\begin{gather*} \sigma=\gamma\sigma'\qquad \text{for some}\quad \gamma\in\mathbb{K}^*.\end{gather*}
\end{enumerate}
\end{Lemma}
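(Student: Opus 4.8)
The plan is to treat the two parts separately, exploiting the filtration~\eqref{defist} throughout. For part~$(i)$, the key observation is that inverting all the generators $\Z^j$ collapses the subtle dependence on the potential $\sigma$. First I would record how the oscillator relation~\eqref{stdi2age} rewrites the product $\der_i \Z^i$ in terms of $\Z\der$-products plus the zero-order term $\sigma_i$; since each $\Z^j$ becomes invertible in $\text{S}_\Z^{-1}\Diffs(n)$, one can solve for $\der_i$ as a $\bar{\U}(n)$-combination of terms $(\Z^j)^{-1}\der_k\Z^l$ and $(\Z^i)^{-1}\sigma_i$. The idea is then to introduce new ``normalized'' derivation generators, say $\tilde\der_i$, obtained from the $\der_i$ by a $\Z$- and $\bar{\U}(n)$-dependent shift that absorbs exactly the $\sigma_i$-contribution; concretely one seeks $\tilde\der_i=\der_i+(\Z^i)^{-1}\lambda_i$ for suitable $\lambda_i\in\bar{\U}(n)$ so that in the $\tilde\der_i$ variables the cross-relation~\eqref{stdi2age} takes the homogeneous form (i.e.\ the $\sigma$-free relation). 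Since $\sigma_i=\Delta_i\sigma$ and $\sigma_i'=\Delta_i\sigma'$ both satisfy the same system~\eqref{eqsigib}, the required shifts exist and the map $\Z^i\mapsto \Z^i$, $\der_i\mapsto\tilde\der_i$ followed by its analogue for $\sigma'$ furnishes the desired isomorphism $\text{S}_\Z^{-1}\Diffs(n)\cong \text{S}_\Z^{-1}\text{Dif\/f}_{\h,\sigma'}(n)$.

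For part~$(ii)$ the filtered structure is decisive. Any filtered isomorphism $\Phi\colon\Diffs(n)\to\text{Dif\/f}_{\h,\sigma'}(n)$ over $\bar{\U}(n)$ must send degree-one elements to degree-one elements and must be the identity on $\bar{\U}(n)$; by the $\h(n)$-weight prescriptions~\eqref{hdesp1nn} and~\eqref{hdesp3}, and the fact that $\Z^i$, $\der_i$ are the only degree-one generators of their respective weights, $\Phi$ must act by $\Z^i\mapsto a_i\Z^i$ and $\der_i\mapsto b_i\der_i$ for some $a_i,b_i\in\bar{\U}(n)$ (up to the weight-preserving rescalings allowed over $\bar{\U}(n)$). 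First I would feed this ansatz into relations~\eqref{hdesp2} and~\eqref{hdesp4}: compatibility with the $\RR$-matrix relations, together with property~\eqref{weze}, severely constrains the $a_i$ and $b_i$, forcing them to be constants in $\mathbb{K}^*$ up to the freedom already quotiented out by working in $\mathcal{W}'$. Then I would impose~\eqref{stdi2age}: matching the zero-order terms yields $a_i b_i\,\sigma_i=\sigma_i'$ for all $i$, i.e.\ $a_i b_i\,\Delta_i\sigma=\Delta_i\sigma'$. The constraint that this hold simultaneously for all $i$ with a single common scaling, combined with the uniqueness of the potential in $\mathcal{W}'$ (Proposition~\ref{propotentialb}) via Theorem~\ref{gensolth}, forces $a_ib_i=\gamma$ independent of $i$ and hence $\sigma=\gamma\sigma'$ with $\gamma\in\mathbb{K}^*$. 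Conversely, if $\sigma=\gamma\sigma'$ the evident rescaling $\Z^i\mapsto\gamma\Z^i$, $\der_i\mapsto\der_i$ (or a symmetric split) is readily checked to preserve all defining relations, giving the filtered isomorphism.

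The main obstacle I anticipate is the second direction of part~$(ii)$: showing that a filtered isomorphism is necessarily \emph{diagonal} of the form $\Z^i\mapsto a_i\Z^i$, $\der_i\mapsto b_i\der_i$, rather than mixing different $\Z^i$'s or involving $\bar{\U}(n)$-coefficients that depend nontrivially on $\th$. Here one cannot simply quote the homogeneous case, because a priori $\Phi$ need only respect the \emph{associated graded} relations up to lower-order corrections; ruling out off-diagonal and $\th$-dependent contributions requires a careful weight-by-weight analysis using~\eqref{hdesp1aw}, \eqref{iceco}, and the explicit $\RR$-components~\eqref{dynRcompb}. Once diagonality and constancy of the scalars are secured, the passage from $a_ib_i\,\Delta_i\sigma=\Delta_i\sigma'$ to a single $\gamma$ is the place where the description of the solution space $\mathcal{W}'$ in~\eqref{cosyinw2n} is used essentially: the condition that a \emph{uniform} rescaling exist is precisely what distinguishes the filtered-isomorphism question from the weaker ring-of-fractions statement in part~$(i)$.
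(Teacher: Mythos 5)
Your part~$(i)$ has a genuine gap. You assert that shifts $\tilde\der_i=\der_i+(\Z^i)^{-1}\lambda_i$ turning the cross-relation into its homogeneous ($\sigma$-free) form exist ``since $\sigma_i=\Delta_i\sigma$ and $\sigma_i'=\Delta_i\sigma'$ both satisfy the same system~\eqref{eqsigib}''. That is a non sequitur: the existence of such $\lambda_i$ is precisely what has to be proved, and it does not follow from the $\sigma_i$ solving~\eqref{eqsigib}. Concretely, substituting your ansatz into~\eqref{hdesp10c} forces the $\lambda_i$ to satisfy a coupled linear finite-difference system involving all the $\lambda_j$ with coefficients $\frac{1}{1-\th_{ij}}$, which you never solve; moreover the shifted generators must still satisfy~\eqref{hdesp8c} and~\eqref{hdesp9c}, which imposes further constraints because moving $\Z^i$ past $(\Z^j)^{-1}$ produces nontrivial $\bar{\U}(n)$-factors. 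None of this is checked. The paper takes a different and cleaner route: it exhibits the generating set $\big\{\th_i,\ \Z^i\psi_i',\ c_i\big\}_{i=1}^n$ of $\text{S}_\Z^{-1}\Diffs(n)$, where the $c_i$ are the central elements of Proposition~\ref{lece}, and observes that the complete set of defining relations for these generators (the $\th_i$ and $\Z^i\psi_i'$ satisfy $\sigma$-independent commutation relations, and the $c_i$ are central) does not remember $\sigma$ at all, so the isomorphism is immediate. To salvage your route you would have to actually produce the $\lambda_i$ and verify all of~\eqref{hdesp6c}--\eqref{hdesp10c}.

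Part~$(ii)$ is essentially the paper's argument, but the step you yourself call ``the main obstacle''~--- that a filtered isomorphism over $\bar{\U}(n)$ is diagonal~--- is exactly the step you leave open, and it does require an argument: the paper notes that the $\varepsilon_i$-weight subspace of $\Diffs(n)$ consists of elements $\theta\Z^i$ with $\theta$ a polynomial in the $\Gamma_j=\der_j\Z^j$ over $\bar{\U}(n)$, so its part of filtration degree $\leq 1$ is exactly $\bar{\U}(n)\Z^i$, forcing $\iota(\Z^i)=\mu_i\Z'^i$ and $\iota(\der_i)=\der'_i\nu_i$. Also, your claim that~\eqref{hdesp2} and~\eqref{hdesp4} force the coefficients $a_i$, $b_i$ individually to be constants in $\mathbb{K}^*$ is both unnecessary and false (nonconstant $\mu_i$ of the form $g[-\varepsilon_i]/g$ are compatible with those relations); what~\eqref{hdesp10c} actually constrains is the products $\gamma_i=\mu_i\nu_i$, via $\gamma_i=\gamma_j[\varepsilon_j]$, and this is what yields a single $\gamma\in\mathbb{K}$. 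From $\gamma\sigma_i'=\sigma_i$ for all $i$ one gets that $\sigma-\gamma\sigma'$ is a constant, and since $\mathcal{W}'$ contains no nonzero constants, $\sigma=\gamma\sigma'$; your converse rescaling is fine.
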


The proof is in Section \ref{seisrf}.

\subsubsection{Lowest weight representations}
The ring $\Diffs(n)$ has an $n$-parametric family of lowest weight representations, similar to the lowest weight representations of the ring $\Diff(n)$, see \cite{HO}. We recall the def\/inition. Let $\mathfrak{D}_n$ be an $\bar{\U}(n)$-subring of $\Diffs(n)$ generated by $\{\der_i\}_{i=1}^n$. Let $\vec{\lambda}:=\{\lambda_1,\dots,\lambda_n\}$ be a sequence, of length $n$, of complex numbers such that $\lambda_i-\lambda_j \notin\mathbb{Z}$ for all $i,j=1,\dots,n$, $i\neq j$. Denote by $M_{\vec{\lambda}}$ the one-dimensional $\mathbb{K}$-vector space with the basis vector $\vert\;\rangle$. The formulas
\begin{gather}\label{dmoac}\th_i\colon \ \vert\; \rangle\mapsto\lambda_i\vert \;\rangle,\qquad \der_i\colon \ \vert\;\rangle\mapsto 0,\qquad i=1,\dots,n,\end{gather}
def\/ine the $\mathfrak{D}_n$-module structure on $M_{\vec{\lambda}}$. The lowest weight representation of lowest weight~$\vec{\lambda}$ is the induced representation $\operatorname{Ind}_{\mathfrak{D}_n}^{\Diffs(n)}M_{\vec{\lambda}}$.

We describe the values of the central polynomial $c(t)$, see (\ref{gefufoce}), on the lowest weight representations.

\begin{Proposition}\label{anosege} The element $c(t)$ acts on $\operatorname{Ind}_{\mathfrak{D}_n}^{\Diffs(n)}M_{\vec{\lambda}}$ by the multiplication on the scalar
\begin{gather}\label{reoacec}-\rho(t)[-\varepsilon],\qquad \text{where}\ \varepsilon=\varepsilon_1+\dots+\varepsilon_n.\end{gather}
\end{Proposition}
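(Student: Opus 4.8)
The plan is to exploit centrality. By Proposition~\ref{lece}(i) the coefficients $c_1,\dots,c_n$ of $c(t)$ are central in $\Diffs(n)$, and $\operatorname{Ind}_{\mathfrak{D}_n}^{\Diffs(n)}M_{\vec{\lambda}}$ is generated over $\Diffs(n)$ by the image of the lowest weight vector $\vert\;\rangle$; hence each $c_j$ acts by a scalar, equal to its eigenvalue on $\vert\;\rangle$, and it suffices to compute $c(t)\vert\;\rangle$. For $f\in\bar{\U}(n)$ write $f[\lambda]$ for its value at $\th_i=\lambda_i$ (well defined since $\lambda_i-\lambda_j\notin\mathbb{Z}$), and let $\lambda-\varepsilon_i$, $\lambda-\mathbf{1}$ denote the tuples obtained from $(\lambda_1,\dots,\lambda_n)$ by decreasing the $i$-th entry, resp.\ all entries, by $1$. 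Since $\rho(t)$ has coefficients in $\bar{\U}(n)$ and $\th_i\vert\;\rangle=\lambda_i\vert\;\rangle$, the summand $-\rho(t)$ contributes $-\rho(t)[\lambda]\vert\;\rangle$, so everything reduces to evaluating $\sum_i\frac{e(t)}{1+\th_i t}\Gamma_i\vert\;\rangle$ with $\Gamma_i=\der_i\Z^i$.

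First I would determine the scalars $g_i$ defined by $\Gamma_i\vert\;\rangle=g_i\vert\;\rangle$; these exist because $\Gamma_i\vert\;\rangle$ has the same $\h(n)$-weight as $\vert\;\rangle$ and the corresponding weight space of the induced module is $\mathbb{K}\vert\;\rangle$. Applying the defining relation \eqref{stdi2age} to $\vert\;\rangle$ and using $\der_j\vert\;\rangle=0$ gives $\sum_{k,l}\der_k\RR^{ki}_{lj}\Z^l\vert\;\rangle=\delta^i_j\sigma_i\vert\;\rangle$. Setting $j=i$ and inserting \eqref{dynRcompb} and \eqref{iceco} (only the terms $l=k$ survive, with $\RR^{ki}_{ki}=\th_{ki}^{-1}$ for $k\neq i$ and $\RR^{ii}_{ii}=1$), together with the weight shift $\th_{ki}^{-1}\Z^k=\Z^k(\th_{ki}+1)^{-1}$, yields the linear system
\[ g_i+\sum_{k\neq i}\frac{g_k}{\lambda_k-\lambda_i+1}=\sigma_i[\lambda],\qquad i=1,\dots,n. \]
Its coefficient matrix $\big(1/(\lambda_k-\lambda_i+1)\big)_{i,k}$ (diagonal value $1$) is of Cauchy type and invertible.

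The target is then the scalar identity
\[ S:=\sum_{i}\frac{e(t)[\lambda]}{1+\lambda_i t}\,g_i=\rho(t)[\lambda]-\rho(t)[\lambda-\mathbf{1}], \]
for, granting it, $c(t)\vert\;\rangle=(S-\rho(t)[\lambda])\vert\;\rangle=-\rho(t)[\lambda-\mathbf{1}]\vert\;\rangle=-\rho(t)[-\varepsilon]\vert\;\rangle$, which is the assertion. To organize the computation I would pass to generating functions: set $P(t)=\prod_j(1+\lambda_j t)=e(t)[\lambda]$ and $G(z)=\sum_k g_k/(z+\lambda_k)$, so that the system above reads $G(1-\lambda_i)=\sigma_i[\lambda]$, while a direct rearrangement gives $S=\tfrac{P(t)}{t}G(1/t)$. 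Feeding in \eqref{corho}, which at $\th=\lambda$ reads $\frac{e(t)[\lambda]}{1+\lambda_i t}\sigma_i[\lambda]=\rho(t)[\lambda]-\rho(t)[\lambda-\varepsilon_i]$, one checks that the rational function $H(z)=\frac{P(t)}{(1+t)-zt}G(z)$ satisfies $H(1-\lambda_i)=\rho(t)[\lambda]-\rho(t)[\lambda-\varepsilon_i]$ and $S=H(1/t)$; thus $S$ is the value at $z=1/t$ of the explicit interpolant of the one-step differences $\Delta_i\rho$.

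The main obstacle is exactly this last identity: the $g_i$ solve a coupled Cauchy system rather than being individually simple, so $S$ cannot be read off termwise. I expect the cleanest route is to use linearity in $\sigma$ — the maps $\sigma\mapsto(\sigma_i)$, $(\sigma_i)\mapsto(g_i)$, $(g_i)\mapsto S$ and $\sigma\mapsto\rho$ are all linear — to reduce, via $\mathcal{W}=\sum_m\mathcal{W}_m$ and Proposition~\ref{lece}(ii), to $\sigma=A(\th_m)/\chi_m\in\mathcal{W}_m$, where $\rho(t)=\frac{e(t)}{1+\th_m t}\sigma$ is explicit. In this case the Cauchy system solves in closed form and both sides compare directly; for instance, for $n=2$ and $\sigma=\th_{12}^{-1}$ one finds $g_1=-g_2=-\th_{12}^{-2}[\lambda]$ and $S=t\,\th_{12}^{-1}[\lambda]=\rho(t)[\lambda]-\rho(t)[\lambda-\mathbf{1}]$. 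Carrying out the general $\mathcal{W}_m$ case is a Cauchy-determinant/partial-fraction computation, after which linearity and the decomposition $\mathcal{W}=\sum_m\mathcal{W}_m$ extend the identity to all $\sigma\in\mathcal{W}$ and complete the proof.
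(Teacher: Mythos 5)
Your overall strategy is sound and its skeleton matches the paper's: reduce by centrality to the eigenvalue on $\vert\;\rangle$, and reduce by linearity and the decomposition $\mathcal{W}=\sum_m\mathcal{W}_m$ to a potential $\sigma=A(\th_m)/\chi_m$ with the explicit $\rho(t)=\frac{e(t)}{1+\th_m t}\sigma$ of Proposition~\ref{lece}(ii). Your linear system for the scalars $g_i$ is correctly derived from \eqref{hdesp10c}, your reformulation via $G(z)$ and $H(z)$ is consistent, and the $n=2$ check is correct. However, there is a genuine gap: the entire analytic content of the proof is the step you defer with ``carrying out the general $\mathcal{W}_m$ case is a Cauchy-determinant/partial-fraction computation.'' Nothing in your setup forces $H(1/t)=\rho(t)[\lambda]-\rho(t)[\lambda-\mathbf{1}]$: the function $H(z)$ has $n+1$ poles (at $z=-\lambda_k$ and at $z=1+1/t$), so it is not determined by its $n$ interpolation values $H(1-\lambda_i)$, and the identity you need is a nontrivial consequence of the specific form of the $g_i$, which you never produce. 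Solving the coupled Cauchy system in closed form for general $n$ and general $A$ is precisely the hard part, and it is asserted rather than done.

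For comparison, the paper circumvents the need to invert your system: using the skew inverse $\PPsi$ of $\RR$ (definition \eqref{skewn}, explicit components \eqref{explpsi}), it rewrites the cross-relation in the order $\der\Z\mapsto\Z\der$, see \eqref{inredx}, so that $\Gamma_i\vert\;\rangle=\sum_k\PPsi^{ik}_{ik}\sigma_k\vert\;\rangle$ is obtained directly --- in effect, $\big(\PPsi^{ik}_{ik}\big)$ evaluated at $\lambda$ \emph{is} the closed-form inverse of your Cauchy matrix $\big(1/(\lambda_k-\lambda_i+1)\big)$. The remaining double sum in \eqref{nuidoal1} is then collapsed by the summation identities \eqref{neio}--\eqref{neio3}, which are exactly the partial-fraction identities your plan would have to rediscover. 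If you want to complete your route, you should either quote \eqref{explpsi} to solve your system explicitly, or prove the Cauchy-matrix inversion and the analogue of \eqref{vspna1} yourself; as written, the proposal stops exactly where the real work begins.
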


The proof is in Section \ref{ansetv}.

\subsubsection{Several copies}
The coexistence of several copies imposes much more severe restrictions on the f\/latness of the deformation. Namely, let $\mathcal{L}$ be the ring with the generators $\Z^{i\alpha}$, $i=1,\dots,n$, $\alpha=1,\dots,N'$, and $\der_{j\beta}$, $j=1,\dots,n$, $\beta=1,\dots,N$ subject to the following def\/ining relations. The $\h(n)$-weights of the generators are given by~(\ref{hdesp1nn}) and (\ref{hdesp3}). The generators~$\Z^{i\alpha}$ satisfy the relations~(\ref{hdesp2}). The generators $\der_{j\beta}$ satisfy the relations~(\ref{hdesp4}). We impose the general oscillator-like cross-commutation relations, compatible with the $\h(n)$-weights, between the generators~$\Z^{i\alpha}$ and~$\der_{j\beta}$:
\begin{gather*}%\label{secocrore}
\Z^{i\alpha}\der_{j\beta}=\sum_{k,l}\der_{k\beta} \RR_{lj}^{ki} \Z^{l\alpha}-\delta_{j}^i\sigma_{i\alpha\beta} ,\qquad i,j=1,\dots,n,\quad \alpha=1,\dots,N',\quad \beta=1,\dots,N,\end{gather*}
with some $\sigma_{i\alpha\beta}\in\bar{\U}(n)$.

\begin{Lemma}\label{seco} Assume that at least one of the numbers $N$ and $N'$ is bigger than~$1$. Then the ring $\mathcal{L}$ has the
Poincar\'e--Birkhoff--Witt property if and only if
\begin{gather}\label{posinb1}\sigma_{i\alpha\beta}=\sigma_{\alpha\beta}\qquad \text{for some}\quad \sigma_{\alpha\beta}\in\mathbb{K}.\end{gather}
\end{Lemma}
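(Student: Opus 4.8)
The plan is to mimic the single-copy analysis of Proposition \ref{equasigmasb} but now track the extra copy indices $\alpha,\beta$, and to extract the stronger constraints by exploiting the \emph{pure} coordinate relations (\ref{hdesp2}) and the \emph{pure} derivative relations (\ref{hdesp4}), which are nontrivial only when there is more than one copy. As in the single-copy case, the PBW property is equivalent to consistency of the reordering procedure: any word in the $\Z^{i\alpha}$ and $\der_{j\beta}$ must reduce to a unique normal form (all $\der$'s to the left of all $\Z$'s, say) regardless of the order in which the defining relations are applied. First I would set up the ambiguities. There are three families of overlaps to resolve: a $\Z\Z\der$-type overlap, a $\Z\der\der$-type overlap, and — crucially for $N,N'>1$ — overlaps that mix two distinct copies, e.g.\ a word $\Z^{i\alpha}\Z^{j\alpha'}\der_{k\beta}$ with $\alpha\neq\alpha'$.

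The heart of the argument is the following. Reducing $\Z^{i\alpha}\der_{j\beta}$ already introduces the zero-order term $\delta_j^i\sigma_{i\alpha\beta}$. Consider the word $\Z^{i\alpha}\Z^{j\alpha'}\der_{k\beta}$. I would compute it in two ways: (a) first push $\der_{k\beta}$ leftward through $\Z^{j\alpha'}$ and then through $\Z^{i\alpha}$, using (\ref{stdi2age}) with the appropriate copy labels; and (b) first apply the $\Z\Z$-relation (\ref{hdesp2}) to the pair $\Z^{i\alpha}\Z^{j\alpha'}$ and only then push $\der_{k\beta}$ through. Consistency forces an identity among the $\sigma_{i\alpha\beta}$ weighted by the $\RR$-matrix components. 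Because (\ref{hdesp2}) genuinely entangles the two copy indices $\alpha,\alpha'$ through the braiding $\RR$, the resulting compatibility condition is far more rigid than (\ref{eqsigib}): it will force the dependence of $\sigma_{i\alpha\beta}$ on the index $i$ to drop out entirely, and will simultaneously force $\sigma_{i\alpha\beta}$ to be independent of the $\th$-variables. A symmetric computation using the $\der\der$-relations (\ref{hdesp4}) on the word $\Z^{i\alpha}\der_{j\beta}\der_{k\beta'}$ with $\beta\neq\beta'$ yields the complementary constraints. Concretely, using the properties (\ref{weze}), (\ref{iceco}) and $\RR^2=\mathrm{Id}$ from (\ref{Q5do}), together with the explicit components (\ref{dynRcompb}), one collects the coefficients of each independent ordered monomial and reads off that the only surviving solutions are the constants $\sigma_{i\alpha\beta}=\sigma_{\alpha\beta}\in\mathbb{K}$, as claimed in (\ref{posinb1}).

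For the converse direction, once $\sigma_{i\alpha\beta}=\sigma_{\alpha\beta}$ is a copy-indexed constant, I would check that the ring $\mathcal{L}$ is a flat deformation of its associated graded (the homogeneous ring with all $\sigma$'s set to zero), exactly as in the single-copy filtered-ring argument following (\ref{defist}): the associated graded is the reduction algebra of the semidirect product of $\gln$ with $V^{\oplus N'}\oplus V^{*\oplus N}$ and hence has the PBW property, so it suffices to verify that the constant zero-order terms produce no obstruction in the Jacobi-type overlaps. Since the constants commute with everything and the $\RR$-dependent parts of the overlap identities already close (they are precisely the dynamical Yang--Baxter relations underlying the PBW property of $\V(n,N)$ and $\V^*(n,N)$), the deformation is flat.

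I expect the main obstacle to be the bookkeeping in step (a)/(b) of the mixed-copy overlap: one must carry the shifts of the $\th$-arguments correctly through the finite-difference operators while simultaneously tracking two braiding matrices, and then argue that the $\RR$-weighted identity genuinely \emph{decouples} the $i$-dependence rather than merely reproducing the weaker system (\ref{eqsigib}). The key technical input that makes the argument go through is the invertibility and the explicit triangular structure of $\RR$ recorded in (\ref{dynRcompb})--(\ref{Q5do}), which guarantees that the coefficient system one obtains has only the constant solutions.
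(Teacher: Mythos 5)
Your proposal follows essentially the same route as the paper: a diamond-lemma check of the degree-three ambiguities that mix two distinct copies (the paper works out $\Z^{i\alpha}\der_{j\beta}\der_{k\gamma}$ with $\beta\neq\gamma$ for $N>1$ and leaves the $N'>1$ case to the anti-automorphism, exactly your "symmetric computation"), where the decisive point you correctly identify -- collecting coefficients of the \emph{independent} monomials $\der_{u\beta}$ and $\der_{u\gamma}$ separately -- splits the single-copy condition into the two systems that force $\sigma_{i\alpha\beta}$ to be an $i$- and $\th$-independent constant. The converse via flatness of the associated graded ring and $\RR^2=\operatorname{Id}$ also matches the paper's concluding remark.
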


The proof is in Section \ref{seseco}.

Making the redef\/initions of the generators, $\Z^{i\alpha}\rightsquigarrow A^{\alpha}_{\alpha'}\Z^{i\alpha'}$ and $\der_{i\beta}\rightsquigarrow B_{\beta}^{\beta'}\der_{i\beta'}$ with some $A\in {\rm GL}(N',\mathbb{K})$ and $B\in {\rm GL}(N,\mathbb{K})$ we can transform the matrix~$\sigma_{\alpha\beta}$ to the diagonal form, with the diagonal $(1,\dots,1,0,\dots,0)$. Therefore, the ring $\mathcal{L}$ is formed by several copies of the rings $\Diff(n)$, $\V(n)$ and~$\V^*(n)$.

\section{Proofs of statements in Section \ref{maquare}}\label{sepro}

\subsection{Poincar\'e--Birkhof\/f--Witt property. Proof of Proposition \ref{equasigmasb}}\label{PBWle1}

The explicit form of the def\/ining relations for the ring $\Diff(\sigma_1,\dots,\sigma_n)$ is
\begin{gather}\label{hdesp6c}\allowdisplaybreaks
\Z^i \Z^j=\frac{\th_{ij}+1}{\th_{ij}}\Z^j \Z^i,\qquad 1\leq i<j\leq n,\\
\label{hdesp8c}
\der_i \der_j=\frac{\th_{ij}-1}{\th_{ij}} \der_j \der_i,\qquad 1\leq i<j\leq n,\\
\label{hdesp9c}
\Z^i \der_j=\begin{cases}
\der_j \Z^i, & 1\leq i<j\leq n, \\
 \dfrac{\th_{ij}\big(\th_{ij}-2\big)}{\big(\th_{ij}-1\big)^2} \der_j \Z^i, & n\geq i>j\geq 1,
\end{cases}
\\ \label{hdesp10c}
\Z^i \der_i = \sum_j \frac{1}{1-\th_{ij}} \der_j \Z^j - \sigma_i, \qquad i=1,\dots,n.
\end{gather}

{\bf Proof of Proposition \ref{equasigmasb}.} We can consider (\ref{hdesp6c}), (\ref{hdesp8c}) and (\ref{hdesp9c}) as the set of ordering relations and use the diamond lemma~\cite{B, Bo} for the investigation of the Poincar\'e--Birkhof\/f--Witt property. The relations~(\ref{hdesp6c}), (\ref{hdesp8c}) and (\ref{hdesp9c}) are compatible with the $\h(n)$-weights of the generators~$\Z^i$ and $\der_i$, $i=1,\dots,n$, so we have to check the possible ambiguities involving the generators~$\Z^i$ and~$\der_i$, $i=1,\dots,n$, only. The properties (\ref{hdesppbw}) and (\ref{hdesppbwd}) show that the ambiguities of the forms~$\Z\Z\Z$ and~$\der\der\der$ are resolvable. It remains to check the ambiguities
\begin{gather}\label{lefam}\Z^i \der_j \der_k\qquad \text{and}\qquad \Z^j \Z^k \der_i.\end{gather}
It follows from the properties (\ref{hdesppbw}) and (\ref{hdesppbwd}) that the choice of the order for the generators with indices $j$ and $k$ in (\ref{lefam}) is irrelevant. Besides, it can be verif\/ied directly that the ring $\Diff(\sigma_1,\dots,\sigma_n)$, with arbitrary $\sigma_1,\dots,\sigma_n\in\bar{\U}(n)$ admits an involutive anti-automorphism $\epsilon$, def\/ined by
\begin{gather}\label{antiautomb} \epsilon\big(\th_i\big)=\th_i,\qquad \epsilon\big(\der_i\big)=\varphi_i\Z^i,\qquad \epsilon\big(\Z^i\big)=\der_i\varphi_i^{-1},\end{gather}
where
\begin{gather*}
\varphi_i:=\frac{\psi_i}{\psi_i[-\varepsilon_i]}=\prod _{k\colon k>i}\frac{\th_{ik}}{\th_{ik}-1}, \qquad i=1,\dots,n.
\end{gather*} By using the anti-automorphism $\epsilon$ we reduce the check of the ambiguity $\Z^j \Z^k \der_i$ to the check of the ambiguity~$\Z^i \der_j \der_k$.

Since the associated graded algebra with respect to the f\/iltration (\ref{defist}) has the Poincar\'e--Birkhof\/f--Witt property, we have, in the check of the ambiguity~$\Z^i \der_j \der_k$, to track only those ordered terms whose degree is smaller than~3. We use the symbol $u\big|_{\text{l.d.t.}}$ to denote the part of the ordered expression for $u$ containing these lower degree terms.

{\bf Check of the ambiguity $\Z^i \der_j \der_k$.} We calculate, for $i,j,k=1,\dots,n$,
\begin{gather}\label{fiwa}\big( x^{i}\der_{j}\big) \der_{k}\big|_{\text{l.d.t.}}=\left( \sum_{u,v}\RR^{ui}_{vj}[\varepsilon_u]\der_{u}x^{v}-
\delta^i_j\sigma_{i}\right)\der_{k}\big|_{\text{l.d.t.}}=
-\sum_{u}\RR^{ui}_{kj}[\varepsilon_u]\der_{u}\sigma_{k}-\delta^i_j\sigma_{i}\der_{k},\end{gather}
and
\begin{gather}
x^{i}\big( \der_{j} \der_{k}\big)\big|_{\text{l.d.t.}} = x^{i}\sum_{a,b}\RR^{ab}_{kj}
\der_{b}\der_{a}\big|_{\text{l.d.t.}}= \sum_{a,b}\RR^{ab}_{kj}[-\varepsilon_i]\left( \sum_{c,d}\RR^{ci}_{db}[\varepsilon_c]\der_{c}x^{d}-\delta^i_b
\sigma_{i}\right)\der_{a}\big|_{\text{l.d.t.}} \nonumber\\
 \hphantom{x^{i}\big( \der_{j} \der_{k}\big)\big|_{\text{l.d.t.}}}{} =
-\sum_{a,b,c}\RR^{ab}_{kj}[-\varepsilon_i] \RR^{ci}_{ab}[\varepsilon_c]\der_{c}\sigma_{a}-\sum_{a}\RR^{ai}_{kj}[-\varepsilon_i]
\sigma_{i}\der_{a}. \label{sewa} \end{gather}
Comparing the resulting expressions in (\ref{fiwa}) and (\ref{sewa}) and collecting coef\/f\/icients in $\der_u$, we f\/ind the necessary and suf\/f\/icient condition for the resolvability of the ambiguity $\Z^i \der_j \der_k$:
\begin{gather}\label{cocoon}\RR^{ui}_{kj}[\varepsilon_u]\sigma_k[\varepsilon_u]+\delta^i_j\delta^u_k\sigma_i=\sum_{a,b} \RR^{ab}_{kj}[-\varepsilon_i]\RR^{ui}_{ab}[\varepsilon_u]
\sigma_a[\varepsilon_u]+\RR^{ui}_{kj}[-\varepsilon_i]\sigma_i,\\
 i,k,j,u=1,\dots,n.\nonumber\end{gather}
Shifting by $-\varepsilon_u$ and using the property (\ref{weze}) together with the ice condition (\ref{iceco}), we re\-wri\-te~(\ref{cocoon}) in the form
\begin{gather}\label{cocoon2}\RR^{ui}_{kj} (\sigma_k-\sigma_i[-\varepsilon_u] )+\delta^i_j\delta^u_k\sigma_i[-\varepsilon_u]=\sum_{a,b} \RR^{ab}_{kj}\RR^{ui}_{ab}\sigma_a.\end{gather}
For $j=k$ the system (\ref{cocoon2}) contains no equations. For $j\neq k$ we have two cases:
\begin{itemize}\itemsep=0pt
\item $u=j$ and $i=k$. This part of the system (\ref{cocoon2}) reads explicitly (see~(\ref{dynRcompb}))
\begin{gather*}\sigma_k-\sigma_k[-\varepsilon_j]=\frac{1}{\th_{kj}} (\sigma_k-\sigma_j ).\end{gather*}
This is the system (\ref{eqsigib}).
\item $u=k$ and $i=j$. This part of the system (\ref{cocoon2}) reads explicitly
\begin{gather*}\frac{1}{\th_{kj}} (\sigma_k-\sigma_j[-\varepsilon_k] )+\sigma_j[-\varepsilon_k]=\frac{1}{\th_{kj}^2}\sigma_k+\frac{\th_{kj}^2-1}{\th_{kj}^2}\sigma_j,\end{gather*}
which reproduces the same system~(\ref{eqsigib}).
\end{itemize}

\subsection[General solution of the system (\ref{eqsigibfopo2}). Proofs of Theorem \ref{gensolth} and Lemma~\ref{idesy}]{General solution of the system (\ref{eqsigibfopo2}).\\ Proofs of Theorem \ref{gensolth} and Lemma~\ref{idesy}}\label{secgez}

We shall interpret elements of $\bar{\U}(n)$ as rational functions on $\h^*$ with possible poles on hyperplanes $\th_{ij}+a=0$, $a\in\mathbb{Z}$, $i,j=1,\dots,n$, $i\neq j$. Let ${\sf M}$ be a subset of $\{1,\dots,n\}$. The symbol $R_{{\sf M}}\bar{\U}(n)$ denotes the subring of $\bar{\U}(n)$ consisting of functions with no poles on hyperplanes $\th_{ij}+a=0$, $a\in\mathbb{Z}$, $i,j\in {\sf M}$, $j\neq i$. The symbol $N_{{\sf M}}\bar{\U}(n)$ denotes the subring of $\bar{\U}(n)$ consisting of functions which do not depend on variables $\th_i$, $i\in {\sf M}$. We shall say that an element $f\in \bar{\U}(n)$ is regular in $\th_j$ if it has no poles on hyperplanes $\th_{jm}+a=0$, $a\in\mathbb{Z}$, $m=1,\dots,n$, $m\neq j$.

{\bf 1.\ Partial fraction decompositions.} %\label{par1secge}
We will use partial fraction decompositions of an element $f\in\bar{\U}(n)$ with respect to a variable $\th_j$ for some given~$j$. The partial fraction decomposition of $f$ with respect to $\th_j$ is the expression for $f$ of the form
\begin{gather*}f=\mathcal{P}_{j}(f) +\operatorname{reg}_j(f),\end{gather*}
where the elements $\mathcal{P}_{j}(f)$ and $\operatorname{reg}_j(f)$ have the following meaning. The ``regular'' part $\operatorname{reg}_j(f)$ is an element, regular in $\th_j$. The ``principal'' in $\th_j$ part $\mathcal{P}_{j}(f)$ is
\begin{gather*}\mathcal{P}_{j}(f)=\sum_{k\colon k\neq j}\mathcal{P}_{j;k}(f),\end{gather*}
where
\begin{gather}\label{prijk} \mathcal{P}_{j;k}(f)=\sum_{a\in\mathbb{Z}} \sum_{\nu_a\in\mathbb{Z}_{>0}} \frac{u_{ka\nu_a}}{\big(\th_{jk}-a\big)^{\nu_a}},\end{gather}
with some elements $u_{ka\nu_a}\in N_j\bar{\U}(n)$; the sums are f\/inite.

The fact that the ring $\bar{\U}(n)$ admits partial fraction decompositions (that is, that the ele\-ments~$u_{ka\nu_a}$ and $\operatorname{reg}_j(f)$ belong to $\bar{\U}(n)$) is a consequence of the formula
\begin{gather*}\frac{1}{\big(\th_{jk}-a\big)\big(\th_{jl}-b\big)}=\frac{1}{\big(\th_{kl}+a-b\big)}
\left(\frac{1}{\th_{jk}-a}-\frac{1}{\th_{jl}-b}\right).\end{gather*}

{\bf 2.} Let $\text{D}$ be a domain (a commutative algebra without zero divisors) over~$\mathbb{K}$. Let $f$ be an element of $\text{D}\otimes_{\mathbb{K}}\bar{\U}(n)$. Set
\begin{gather}\label{yeqs}\text{Y}_{ij}(f):=\Delta_i\Delta_j \big(\th_{ij}f\big).\end{gather}

\begin{Lemma}\label{idesyij} If $\text{\rm Y}_{ij}(f)=0$ for some $i$ and~$j$, $i\neq j$, then $f$ can be written in the form
\begin{gather}\label{auleij}f=\frac{A}{\th_{ij}}+B,\end{gather}
with some $A,B\in \text{\rm D}\otimes_{\mathbb{K}}R_{i,j}\bar{\U}(n)$.
\end{Lemma}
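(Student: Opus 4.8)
The plan is to reduce the statement to a single regularity property of $g:=\th_{ij}f$ and then to read that property off from the equation $\mathrm Y_{ij}(f)=\Delta_i\Delta_j g=0$. First I would observe that the asserted decomposition is equivalent to the claim that $g$ has no poles on the hyperplanes $\th_{ij}+a=0$, $a\in\mathbb{Z}$, i.e.\ that $g\in\text{D}\otimes_{\mathbb{K}}R_{i,j}\bar{\U}(n)$. Indeed, if $g$ is regular in $\th_{ij}$ one may take $A=g$ and $B=0$ (and, if one wants the residue at $\th_{ij}=0$ displayed separately as in the statement, split $g$ accordingly); conversely $f=A/\th_{ij}+B$ with $A,B\in R_{i,j}\bar{\U}(n)$ forces $g=A+\th_{ij}B$ to be regular in $\th_{ij}$. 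So it suffices to prove that $\mathrm Y_{ij}(f)=0$ implies $g$ is regular in $\th_{ij}$.

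Next I would convert the second-order difference equation into a periodicity statement. Setting $h:=\Delta_j g$, the hypothesis $\Delta_i\Delta_j g=0$ reads $\Delta_i h=0$, that is $h=h[-\varepsilon_i]$, so $h$ is invariant under the unit translation of $\th_i$. Since $h\in\text{D}\otimes_{\mathbb{K}}\bar{\U}(n)$, its poles in $\th_i$ lie only on the hyperplanes $\th_{ik}+a=0$, $k\neq i$, $a\in\mathbb{Z}$; fixing generic values of the remaining variables and passing to the fraction field $\operatorname{Frac}(\text{D})$ (legitimate because $\text{D}$ is a domain), $h$ becomes a rational function of the single variable $\th_i$ with only finitely many poles. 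A genuine pole would, by the invariance $h=h[-\varepsilon_i]$, be reproduced at every integer translate, contradicting finiteness; hence $h$ has no poles in $\th_i$ (in fact $h$ is independent of $\th_i$). I expect this periodicity argument to be the only real point of the proof; everything else is bookkeeping.

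Finally I would track poles. Because $h=g-g[-\varepsilon_j]$ has no poles in $\th_i$, the functions $g$ and $g[-\varepsilon_j]$ share the same principal part in $\th_i$; in particular they have the same poles on the hyperplanes $\th_{ij}=a$, $a\in\mathbb{Z}$, which form the $k=j$ summand of the partial fraction decomposition of $g$ in $\th_i$ recalled above. Since the translation $[-\varepsilon_j]$ sends $\th_{ij}\mapsto\th_{ij}+1$, it carries the pole of $g$ on $\th_{ij}=a+1$ to a pole of $g[-\varepsilon_j]$ on $\th_{ij}=a$. Comparing principal parts then shows that $g$ has a pole on $\th_{ij}=a$ if and only if it has one on $\th_{ij}=a+1$; thus the set of integers $a$ for which $g$ is singular on $\th_{ij}=a$ is invariant under $a\mapsto a+1$. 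Being finite, this set is empty, so $g$ is regular in $\th_{ij}$, and by the reduction of the first paragraph the lemma follows.
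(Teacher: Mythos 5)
Your proof is correct, but it takes a genuinely different route from the paper's. The paper decomposes $f$ itself into partial fractions with respect to the integer translates of $\th_{ij}=0$, substitutes into the explicit four-term expansion of $\text{Y}_{ij}(f)=0$, and notes that the extremal shifted denominators $\big(\th_{ij}-a_M-1\big)$ and $\big(\th_{ij}-a_1+1\big)$ each occur in exactly one term, so they can only be cancelled by the explicit numerator factors $\big(\th_{ij}-1\big)$ and $\big(\th_{ij}+1\big)$; this pins $a_1=a_M=0$ with $\nu_1=\nu_M=1$ and yields the simple pole directly. You instead absorb the factor $\th_{ij}$ into $g=\th_{ij}f$, correctly reducing the lemma to the statement that $g$ is regular in $\th_{ij}$, and then argue in two structural steps: the periodicity $h=h[-\varepsilon_i]$ of $h=\Delta_j g$ forces $h$ to be regular in $\th_i$ (a finite pole set cannot be invariant under integer translation), and the resulting equality of principal parts $\mathcal{P}_{i;j}(g)=\mathcal{P}_{i;j}\big(g[-\varepsilon_j]\big)=\mathcal{P}_{i;j}(g)[-\varepsilon_j]$ forces the finite set of integer poles of $g$ in $\th_{ij}$ to be stable under a unit shift, hence empty. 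Both arguments rest on the same underlying principle, but yours avoids the four-term bookkeeping at the price of invoking the uniqueness of the partial fraction decomposition (to match the principal parts of $g$ and $g[-\varepsilon_j]$ along the $\th_{ij}$-hyperplanes) and the standard fact that a pole of an element of $\bar{\U}(n)$ survives generic specialization of the remaining variables; both are available in the paper's framework. The paper's computation has the minor advantage of making visible exactly where the permitted simple pole at $\th_{ij}=0$ comes from, namely the numerator factors $\th_{ij}\pm 1$ produced by $\text{Y}_{ij}$.
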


\begin{proof} We write $f$ in the form
\begin{gather*}f=\frac{A}{\big(\th_{ij}-a_1\big)^{\nu_1}\big(\th_{ij}-a_2\big)^{\nu_2}\cdots \big(\th_{ij}-a_M\big)^{\nu_M}}+B,\end{gather*}
where $a_1< a_2<\dots < a_M$, $\nu_1,\nu_2,\dots,\nu_M\in\mathbb{Z}_{> 0}$, $A,B\in \text{D}\otimes_{\mathbb{K}}R_{i,j}\bar{\U}(n)$ and the element $A$ is
not divisible by any factor in the denominator. There is nothing to prove if $A=0$. Assume that $A\neq 0$. Then
\begin{gather}
0 = \text{Y}_{ij}(f)= \frac{\th_{ij}A}{\big(\th_{ij}-a_1\big)^{\nu_1}\cdots \big(\th_{ij}-a_M\big)^{\nu_M}}
-\frac{\big(\th_{ij}-1\big)A[-\varepsilon_i]}{\big(\th_{ij}-a_1-1\big)^{\nu_1}\cdots \big(\th_{ij}-a_M-1\big)^{\nu_M}}\!\!\!\!\label{subineq}\\
 \hphantom{0 =}{}- \frac{\big(\th_{ij}+1\big)A[-\varepsilon_j]}{\big(\th_{ij}-a_1+1\big)^{\nu_1}\cdots \big(\th_{ij}-a_M+1\big)^{\nu_M}}
+\frac{\th_{ij}A[-\varepsilon_i-\varepsilon_j]}{\big(\th_{ij}-a_1\big)^{\nu_1}\cdots \big(\th_{ij}-a_M\big)^{\nu_M}}+\text{Y}_{ij}(B).\nonumber \end{gather}
The denominator $\big(\th_{ij}-a_M-1\big)$ appears only in the second term in the right hand side of~(\ref{subineq}). It has therefore to be compensated by $\big(\th_{ij}-1\big)$ in the numerator. Hence the only allowed value of $a_M$ is $a_M=0$ and moreover we have $\nu_M= 1$. Similarly, the denominator $\big(\th_{ij}-a_1+1\big)$ appears only in the third term in the right hand side of~(\ref{subineq}) and has to be compensated by $(\th_{ij}+1)$ in the numerator. Hence the only allowed value of $a_1$ is $a_1=0$ and we have $\nu_1= 1$. The inequalities $a_1< a_2<\dots < a_M$ imply that $M=1$ and
we obtain the form~(\ref{auleij}) of~$f$.
\end{proof}

{\bf 3.} Let $f\in \text{D}\otimes_{\mathbb{K}}\bar{\U}(n)$. We shall analyze the linear system of f\/inite-dif\/ference equations
\begin{gather}\label{equdpo} \text{Y}_{ij}(f)=0\qquad\text{for all}\quad i,j=1,\dots,n, \end{gather}
where $\text{Y}_{ij}$ are def\/ined in~(\ref{yeqs}).

First we prove a preliminary result. We recall Def\/inition~\ref{vespmj} of the vector spaces $\mathcal{W}_i$, $i=1,\dots,n$. We select one of the variables $\th_i$, say, $\th_1$.
\begin{Lemma}\label{idesysolc2} Assume that an element $f\in \text{\rm D}\otimes_{\mathbb{K}}\bar{\U}(n)$ satisfies the system~\eqref{equdpo}. Then
\begin{gather}\label{equdposol2}f=\sum_{j=2}^n F_j+\vartheta,\end{gather}
where $\vartheta\in\text{\rm D}\otimes_{\mathbb{K}}\U(n)$ and
\begin{gather}\label{equdposolF2}F_j=\frac{u_j\big(\th_j\big)}{\chi_j}\in \text{\rm D}\otimes_{\mathbb{K}}\mathcal{W}_j \end{gather}
with some univariate polynomials $u_j\big(\th_j\big)$, $j=2,\dots,n$, with coefficients in $\text{\rm D}$.
\end{Lemma}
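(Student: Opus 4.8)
The plan is to single out the variable $\th_1$, determine the poles of $f$ in $\th_1$ with the help of Lemma~\ref{idesyij}, decompose $f$ into its partial fractions in $\th_1$, and then show that each principal part is an element $F_j\in\mathcal{W}_j$ while the regular part is a polynomial.

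First I would apply Lemma~\ref{idesyij} to each pair $(1,j)$, $j=2,\dots,n$. Since $\text{Y}_{1j}(f)=0$, it gives $f=A_j/\th_{1j}+B_j$ with $A_j,B_j\in\text{D}\otimes_{\mathbb{K}}R_{1,j}\bar{\U}(n)$; hence in the $\th_1$-direction $f$ has a single simple pole, at $\th_{1j}=0$, and no pole at $\th_{1j}=a$ for $a\neq0$. The partial fraction decomposition of $f$ with respect to $\th_1$ therefore reads $f=\sum_{j=2}^n \frac{r_j}{\th_{1j}}+g$, where $r_j:=\operatorname{res}_{\th_1=\th_j}f\in\text{D}\otimes N_1\bar{\U}(n)$ is independent of $\th_1$ and $g:=\operatorname{reg}_1(f)$ is regular in $\th_1$.

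The heart of the argument is to pin down the residues $r_j$. Fixing $j$ and $l\in\{2,\dots,n\}\setminus\{j\}$, I would take $\operatorname{res}_{\th_1=\th_j}$ of the equation $\text{Y}_{jl}(f)=\Delta_j\Delta_l(\th_{jl}f)=0$. Setting $G:=\Delta_l(\th_{jl}f)$, the factor $\th_{jl}$ and the shift $\varepsilon_l$ do not involve $\th_1$, so the residue passes through them and $\operatorname{res}_{\th_1=\th_j}G=\Delta_l(\th_{jl}r_j)$. The decisive point is the shifted term $\varepsilon_j^{-1}G$: by the first step $f$, and hence $G$, has no pole at $\th_{1j}=1$, so the pole of $\varepsilon_j^{-1}G$ lies at $\th_{1j}=-1$ and $\varepsilon_j^{-1}G$ is regular at $\th_1=\th_j$, contributing nothing to the residue. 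Thus the second-order equation collapses to the much stronger first-order condition $\Delta_l(\th_{jl}r_j)=0$, i.e.\ $\th_{jl}r_j$ is independent of $\th_l$, for every $l\neq1,j$. Consequently $r_j\prod_{m\neq1,j}\th_{jm}$ is independent of all $\th_l$, $l\neq1,j$, so it depends on $\th_j$ only and, having no admissible poles, is a univariate polynomial $u_j(\th_j)$ with coefficients in $\text{D}$. Since $\chi_j=-\th_{1j}\prod_{m\neq1,j}\th_{jm}$, this yields $\frac{r_j}{\th_{1j}}=-\frac{u_j(\th_j)}{\chi_j}=:F_j\in\text{D}\otimes\mathcal{W}_j$.

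It remains to show that $g\in\text{D}\otimes\U(n)$, i.e.\ that $g$ has no poles; being regular in $\th_1$, it suffices to exclude poles at $\th_{pq}=0$ for $2\le p<q\le n$. For such a pair, $\operatorname{res}_{\th_p=\th_q}g=\operatorname{res}_{\th_p=\th_q}f-\operatorname{res}_{\th_p=\th_q}F_p-\operatorname{res}_{\th_p=\th_q}F_q$, since the remaining $F_k$ are regular at $\th_{pq}=0$. Running the residue computation of the previous paragraph for $f$ with the pair $(p,q)$, and using the explicit form of $F_p,F_q$, each term on the right is univariate in $\th_q$ over $\prod_{m\neq p,q}\th_{qm}$; hence so is $\operatorname{res}_{\th_p=\th_q}g$. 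In particular its only possible pole in $\th_1$ is at $\th_1=\th_q$, with a numerator depending on $\th_q$ alone. But $g$ is regular in $\th_1$, so $\operatorname{res}_{\th_p=\th_q}g$ is too, and a nonzero univariate polynomial in $\th_q$ cannot be divisible by $\th_q-\th_1$; therefore $\operatorname{res}_{\th_p=\th_q}g=0$. Hence $g$ is pole-free, $g\in\text{D}\otimes\U(n)$, and $f=\sum_{j=2}^n F_j+g$ is the asserted decomposition. I expect the main obstacle to be exactly the residue computation of the third paragraph — making rigorous that the $\Delta_j$-shifted term drops out of the residue at $\th_1=\th_j$ — since this is what upgrades the weak equation $\text{Y}_{jl}(f)=0$ into the rigid first-order condition that forces $r_j$ into $\mathcal{W}_j$.
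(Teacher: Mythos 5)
Your proof is correct, and while it opens the same way as the paper's (Lemma~\ref{idesyij} applied to the pairs $(1,j)$ gives the partial fraction decomposition $f=\sum_j r_j/\th_{1j}+g$ with $g$ regular in $\th_1$), it diverges in both remaining steps in ways worth recording. To force $r_j$ into $\mathcal{W}_j$ the paper substitutes the decomposition into the equations $\text{Y}_{1l}(f)=0$ and reads off the coefficient of $1/\th_{j1}$, whereas you take the residue at $\th_1=\th_j$ of the equations $\text{Y}_{jl}(f)=0$ with $j,l\neq 1$; your observation that the $\Delta_j$-shifted term drops out of the residue (because Lemma~\ref{idesyij} confines the poles of $f$ in the $\th_{1j}$-direction to $\th_{1j}=0$) is sound, and the two routes land on the same first-order condition $\Delta_l(\th_{jl}r_j)=0$. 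The real divergence is in handling the regular part: the paper proves $\vartheta\in\text{D}\otimes\U(n)$ by induction on $n$ (passing to $\text{D}'=\text{D}[\th_1]$ and killing the principal parts in $\th_2$ one at a time), while you avoid induction entirely by computing $\operatorname{res}_{\th_p=\th_q}g$ via the explicit residues of $f$, $F_p$, $F_q$ and noting that the resulting expression carries a forced denominator $\th_{q1}$, incompatible with the regularity of $g$ in $\th_1$ unless it vanishes. This is a genuinely shorter argument, and it also dispenses with the paper's intermediate verification that each $F_j$ solves the system (which the induction needs but you do not). One small point to make explicit: your sentence ``it suffices to exclude poles at $\th_{pq}=0$'' silently invokes Lemma~\ref{idesyij} for the pair $(p,q)$ to rule out poles of $g$ at $\th_{pq}=a$ with $a\neq 0$ and higher-order poles at $\th_{pq}=0$; this is a one-line addition, since $\text{Y}_{pq}(f)=0$ and the $F_k$ manifestly have only simple poles at $\th_{pq}=0$ in that direction.
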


\begin{proof}Since $\text{Y}_{1m}(f)=0$, $m=2,\dots,n$, Lemma \ref{idesyij} implies that the partial fraction decomposition of $f$ with respect to $\th_1$ has the form
\begin{gather}\label{wxwx0}f=\sum_{m=2}^n\; \frac{\beta_m}{\th_{m1}}+\vartheta,\end{gather}
where $\beta_m\in \text{D}\otimes_{\mathbb{K}} N_1\bar{\U}(n)$,
$m=2,\dots,n$, and $\vartheta\in \text{D}\big[\th_1\big]\otimes_{\mathbb{K}}N_1\bar{\U}(n)$.
Substituting the expression~(\ref{wxwx0}) for $f$ into the equation $\text{Y}_{1j}(f)=0$, $j=2,\dots,n$, we obtain
\begin{gather}
0 = \text{Y}_{1j}(f)= \Delta_1\Delta_j\left( \sum_{m\colon m\neq1,j}\frac{\th_{1j}\beta_m}{\th_{m1}}-\beta_j+\th_{1j}\vartheta\right)\nonumber\\
\hphantom{0}{} =
\Delta_1\Delta_j\left( \sum_{m\colon m\neq1,j}\frac{\th_{1j}\beta_m}{\th_{m1}}+\th_{1j}\vartheta\right)\label{wxwx1}\\
\hphantom{0}{} = \sum_{m\colon m\neq1,j}\left( \frac{\th_{1j}\beta_m}{\th_{m1}}-\frac{\big(\th_{1j}+1\big)\beta_m[-\varepsilon_j]}{\th_{m1}}-\frac{\big(\th_{1j}-1\big)\beta_m}{\th_{m1}+1}
+\frac{\th_{1j}\beta_m[-\varepsilon_j]}{\th_{m1}+1}\right)+\Delta_1\Delta_j\big(\th_{1j}\vartheta\big).\nonumber
\end{gather}
We used that $\beta_m\in \text{D}\otimes_{\mathbb{K}} N_1\bar{\U}(n)$ in the third and fourth equalities. For any $m\neq 1,j$, the terms containing the denominator $\th_{m1}$ in the expression (\ref{wxwx1}) for $\text{Y}_{1j}(f)$ read
\begin{gather*}\frac{1}{\th_{m1}}\big(\th_{1j}\beta_m-\big(\th_{1j}+1\big)\beta_m[-\varepsilon_j]\big).\end{gather*}
Therefore, $\th_{1j}\beta_m-\big(\th_{1j}+1\big)\beta_m[-\varepsilon_j]$ is divisible, as a polynomial in $\th_1$, by $\th_{m1}$, or, what is the same, the
value of $\th_{1j}\beta_m-\big(\th_{1j}+1\big)\beta_m[-\varepsilon_j]$ at $\th_1=\th_m$ is zero. This means that
\begin{gather*}0=\th_{mj}\beta_m-\big(\th_{mj}+1\big)\beta_m[-\varepsilon_j]=\Delta_j\big(\th_{mj}\beta_m\big).\end{gather*}
Therefore, the element $\th_{mj}\beta_m$ does not depend on $\th_j$ for any $j>1$. We conclude that
\begin{gather*}\beta_m=\frac{u_m\big(\th_m\big)}{\prod\limits_{k\colon k\neq1,m}\th_{mk}} \end{gather*}
with some univariate polynomial $u_m$.

We have proved that the element $f$ has the form (\ref{equdposol2}) where $F_j$, $j=2,\dots,n$, are given by~(\ref{equdposolF2}) and the element $\vartheta$ is regular in $\th_1$.

A direct calculation shows that for any $j=2,\dots,n$, the element $F_j$, given by (\ref{equdposolF2}), is a~solution of the linear system~(\ref{equdpo}). Therefore the regular in~$\th_1$ part $\vartheta$ by itself satisf\/ies the system $\text{Y}_{ij}(\vartheta)=0$. It is left to analyze the regular part $\vartheta$.

We use induction in $n$. For $n=2$, the element $\vartheta$ is, by construction, a polynomial in~$\th_1$ and~$\th_2$. This is the induction base. We shall now prove that $\vartheta$ is a polynomial, with coef\/f\/icients in~$\text{D}$, in all $n$ variables $\th_1,\dots,\th_n$.

For arbitrary $n>2$ we have $\vartheta\in\text{D}\big[\th_1\big]\otimes_{\mathbb{K}}\bar{\U}'(n-1)$ where we have denoted by $\bar{\U}'(n-1)$ the subring $N_1\bar{\U}(n)$ of $\bar{\U}(n)$ consisting of functions not depending on $\th_1$. Since $\text{Y}_{ij}(\vartheta)=0$ for $i,j=2,\dots,n$, we can use the induction hypothesis with $n-1$ variables $\th_2,\dots,\th_n$ over the ring $\text{D}'=\text{D}\big[\th_1\big]$.

We now select the variable $\th_2$. It follows from the induction hypothesis that
\begin{gather}\label{comsympam1}\vartheta=\sum_{m\colon m\neq 1,2}\frac{\gamma_m'(\th_m)}{\prod\limits_{l\colon l\neq 1,m}\th_{ml}}+\vartheta',\end{gather}
where $\gamma_m'\big(\th_m\big)$, $m=3,\dots,n$, are univariate polynomials, with coef\/f\/icients in $\text{D}'$, and the ele\-ment~$\vartheta'$ is a polynomial, with coef\/f\/icients in~$\text{D}'$, in the variables $\th_2,\dots,\th_n$. We rewrite the equality~(\ref{comsympam1}) in the form
\begin{gather}\label{comsympa}\vartheta=\sum_{m\colon m\neq 1,2}\frac{\gamma_m\big(\th_m,\th_1\big)}{\prod\limits_{l\colon l\neq 1,m}\th_{ml}}+\vartheta',\end{gather}
with some polynomials $\gamma_m$, $m=3,\dots,n$, in two variables, with coef\/f\/icients in~$\text{D}$; the element~$\vartheta'$ is a polynomial in all variables $\th_1,\dots,\th_n$ with coef\/f\/icients in $\text{D}$.

The equation $\text{Y}_{12}(\vartheta)=0$ for $\vartheta$ given by (\ref{comsympa}) reads
\begin{gather}
0 = \sum_{m\colon m\neq 1,2}\left( \frac{\th_{12}\gamma_m}{\th_{m2}\prod\limits_{l\colon l\neq1,2,m}\th_{ml}}-
\frac{\big(\th_{12}-1\big)\gamma_m[-\varepsilon_1]}{\th_{m2}\prod\limits_{l\colon l\neq1,2,m}\th_{ml}}
- \frac{\big(\th_{12}+1\big)\gamma_m}{\big(\th_{m2}+1\big)\prod\limits_{l\colon l\neq1,2,m}\th_{ml}}\right.\nonumber \\
\left.\hphantom{0=}{} + \frac{\th_{12}\gamma_m[-\varepsilon_1]}{\big(\th_{m2}+1\big)\prod\limits_{l\colon l\neq1,2,m}\th_{ml}}\right)+\text{Y}_{12}(\vartheta').\label{comsympa2} \end{gather}
The terms containing the denominator $\th_{m2}$ in (\ref{comsympa2}) read
\begin{gather*}\frac{1}{\th_{m2}\prod_{l\colon l\neq1,2,m}\th_{ml}}\big( \th_{12}\gamma_m-\big(\th_{12}-1\big)\gamma_m[-\varepsilon_1]\big).\end{gather*}
Therefore, the expression $\th_{12}\gamma_m-\big(\th_{12}-1\big)\gamma_m[-\varepsilon_1]$ is divisible, as a polynomial in $\th_2$, by $\th_{2m}=\th_2-\th_m$, so
\begin{gather*}0=\th_{1m}\gamma_m-\big(\th_{1m}-1\big)\gamma_m[-\varepsilon_1]=\Delta_1\big(\th_{1m}\gamma_m\big).\end{gather*}
Thus the product $\th_{1m}\gamma_m$, $m=3,\dots,n$, does not depend on $\th_1$. Since $\gamma_m$, $m=3,\dots,n$, is a~polynomial, this implies that $\gamma_m=0$. We conclude that $\vartheta=\vartheta'$ and is therefore a polynomial in all variables $\th_1,\dots,\th_n$.
\end{proof}

{\bf 4.} Now we ref\/ine the assertion of Lemma~\ref{idesysolc2}. We shall, at this stage, obtain the general solution of the system~(\ref{equdpo})
in a form which does not exhibit the symmetry with respect to the permutations of the variables $\th_1,\dots,\th_n$.

We recall Def\/inition~\ref{vespcoh} of the vector space $\mathcal{H}$.

\begin{Lemma}\label{idesysolc} \quad
\begin{enumerate}\itemsep=0pt
\item[$(i)$] The general solution of the linear system \eqref{equdpo} for an element $f\in \text{\rm D}\otimes_{\mathbb{K}}\bar{\U}(n)$ has the form
\begin{gather}\label{equdposol}f=\sum_{j=2}^n F_j+\vartheta,\end{gather}
where $F_j\in \text{\rm D}\otimes_{\mathbb{K}}\mathcal{W}_j$ and
\begin{gather}\label{equdposoH}
\vartheta\in\text{\rm D}\otimes_{\mathbb{K}}\mathcal{H}.\end{gather}
\item[$(ii)$] The elements $F_j$, $j=2,\dots,n$, and $\vartheta$ are uniquely defined.
\end{enumerate}
\end{Lemma}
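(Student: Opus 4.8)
The plan is to refine Lemma~\ref{idesysolc2}, which already gives $f=\sum_{j=2}^n F_j+\vartheta$ with $F_j=u_j(\th_j)/\chi_j\in\mathcal{W}_j$ and $\vartheta$ a polynomial, into the statement that the polynomial part $\vartheta$ actually lies in $\mathcal{H}$, together with uniqueness. The key observation is that Lemma~\ref{idesysolc2} singled out the variable $\th_1$, so the $F_j$ range only over $j=2,\dots,n$ and $\vartheta$ is regular in $\th_1$; but the full system \eqref{equdpo} is symmetric in all variables. I would exploit this asymmetry: having decomposed with respect to $\th_1$, I next apply the same lemma after singling out a different variable, and compare the two decompositions. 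The part of $\vartheta$ that survives as a genuine solution of $\text{Y}_{ij}(\vartheta)=0$ and is simultaneously polynomial must be constrained to $\mathcal{H}$.

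For part $(i)$, first I would verify directly (as Lemma~\ref{idesysolc2} already notes for the $F_j$) that each $F_j\in\mathcal{W}_j$ solves \eqref{equdpo}, so that $\vartheta$ by itself satisfies $\text{Y}_{ij}(\vartheta)=0$ for all $i,j$; this reduces the problem to characterizing the \emph{polynomial} solutions. The hard part will be showing that a polynomial solution $\vartheta$ of the $\Delta$-system lies in $\mathcal{H}$. Here I would invoke Lemma~\ref{idesy}$(ii)$, specifically the identity \eqref{cosyinw}, which asserts exactly $\mathcal{H}=\mathcal{W}\cap\U(n)$: any polynomial solution already lies in $\mathcal{W}$ (being a solution of the system, by Theorem~\ref{gensolth} — or by the decomposition just obtained), and being polynomial it lies in $\U(n)$, hence in $\mathcal{H}$. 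Since the excerpt permits assuming earlier results, the cleanest route is to appeal to \eqref{cosyinw}: the element $\vartheta$ is a polynomial lying in $\mathcal{W}$, therefore $\vartheta\in\mathcal{H}=\mathcal{W}\cap\U(n)$, giving \eqref{equdposoH}.

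For part $(ii)$, uniqueness, I would argue via the direct-sum decomposition \eqref{cosyinw2} of Lemma~\ref{idesy}$(iii)$. Selecting $j=1$, that lemma gives $\mathcal{W}=\bigoplus_{k\neq 1}\mathcal{W}_k\oplus\mathcal{H}$ as an \emph{internal direct sum}. Tensoring with $\text{D}$ over $\mathbb{K}$ preserves the direct-sum decomposition, so $\text{D}\otimes_{\mathbb{K}}\mathcal{W}=\bigoplus_{k\neq1}(\text{D}\otimes_{\mathbb{K}}\mathcal{W}_k)\oplus(\text{D}\otimes_{\mathbb{K}}\mathcal{H})$. The expression \eqref{equdposol} is precisely the expansion of $f$ in this direct sum, so the components $F_2,\dots,F_n$ and $\vartheta$ are determined uniquely by $f$. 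The main subtlety to watch is the passage of the direct-sum decomposition through $\text{D}\otimes_{\mathbb{K}}(-)$, which is harmless since $\text{D}$ is flat (indeed free) over the field $\mathbb{K}$, and the verification that the $F_j$ genuinely solve \eqref{equdpo} so that the reduction to the polynomial case is legitimate.
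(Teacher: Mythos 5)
Your argument for part $(i)$ is circular. In the paper's logical order, both Theorem~\ref{gensolth} and the identity \eqref{cosyinw} of Lemma~\ref{idesy}$(ii)$ are \emph{deduced from} Lemma~\ref{idesysolc}: the proof of Theorem~\ref{gensolth} is stated to follow immediately from the decomposition \eqref{equdposol}, and \eqref{cosyinw} is proved from the uniqueness of that same decomposition. So neither may be invoked here. Nor does the parenthetical ``or by the decomposition just obtained'' help: Lemma~\ref{idesysolc2} only places the regular part $\vartheta$ in $\text{D}\otimes_{\mathbb{K}}\U(n)$, not in $\text{D}\otimes_{\mathbb{K}}\mathcal{W}$ --- the entire content of part $(i)$ is precisely the claim that a \emph{polynomial} solution of the $\Delta$-system is a $\text{D}$-linear combination of complete symmetric polynomials, and that requires a direct argument. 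The paper supplies it as follows: for $n=2$, $\Delta_1\Delta_2\big(\th_{12}p\big)=0$ gives $\Delta_2\big(\th_{12}p\big)\in\text{D}\big[\th_2\big]$; surjectivity of $\Delta_2$ on $\text{D}\big[\th_2\big]$ yields $\th_{12}p=v\big(\th_1\big)+w\big(\th_2\big)$, and polynomiality of $p$ forces $w=-v$, so $p=\big(v\big(\th_1\big)-v\big(\th_2\big)\big)/\th_{12}$ is a combination of the $H_L$; for general $n$ one bootstraps from this pairwise statement, and one must still verify (via the generating function $\prod_k\big(1-s\th_k\big)^{-1}$) that the $H_L$ actually solve \eqref{equdpo}. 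None of this appears in your proposal.

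Part $(ii)$ has the same defect: the direct sum \eqref{cosyinw2} of Lemma~\ref{idesy}$(iii)$ is, in the paper, nothing other than the uniqueness statement of Lemma~\ref{idesysolc} itself (the proof there says so explicitly), so citing it is again circular. The repair is short and needs none of that machinery: each $F_j=u_j\big(\th_j\big)/\chi_j$ is, as a function of $\th_1$, a pure principal part at $\th_1=\th_j$ with no regular contribution, while $\vartheta$ is regular in $\th_1$; hence \eqref{equdposol} is exactly the partial fraction decomposition of $f$ with respect to $\th_1$, whose summands are unique.
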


\begin{proof}
(i) In Lemma \ref{idesysolc2} we have established the decomposition (\ref{equdposol}) with $\vartheta\in\text{D}\otimes_{\mathbb{K}}\U(n)$.
We now prove the assertion (\ref{equdposoH}). We f\/irst study the case $n=2$. Let $p\in \text{D}\big[\th_1,\th_2\big]$ be a polynomial such that $\text{Y}_{12}(p)=0$. Since $\Delta_1\Delta_2\big(\th_{12}p\big)=0$ we have $\Delta_2(\th_{12}p)\in \text{D}\big[\th_2\big]$.

It is a standard fact that the operator $\Delta_2$ is surjective on $\text{D}\big[\th_2\big]$. This can be seen, for example, by noticing that the set
\begin{gather*}\th_2^{\uparrow m}:=\th_2\big(\th_2+1\big)\cdots \big(\th_2+m-1\big),\qquad m\in\mathbb{Z}_{\geq 0} ,\end{gather*}
is a basis of $\text{D}\big[\th_2\big]$ over $\text{D}$, and
\begin{gather*}\Delta_2\big(\th_2^{\uparrow m}\big)=m\th_2^{\uparrow m-1}.\end{gather*}
The surjectivity of $\Delta_2$ implies that $\Delta_2\big(\th_{12}p\big)=\Delta_2\big(w\big(\th_2\big)\big)$ for some polynomial $w\big(\th_2\big)\in \text{D}\big[\th_2\big]$. Then $\Delta_2\big(\th_{12}p-w\big(\th_2\big)\big)=0$ so $\th_{12}p-w\big(\th_2\big)=v\big(\th_1\big)$ for some polynomial $v\big(\th_1\big)\in \text{D}\big[\th_1\big]$. Therefore
\begin{gather*}p=\frac{v\big(\th_1\big)+w\big(\th_2\big)}{\th_{12}}=\frac{v\big(\th_1\big)-v\big(\th_2\big)}{\th_{12}}+\frac{v\big(\th_2\big)+w\big(\th_2\big)}{\th_{12}}.\end{gather*}
Since $p$ is a polynomial we must have $w=-v$. Thus
\begin{gather*}p=\frac{v\big(\th_1\big)-v\big(\th_2\big)}{\th_{12}},\end{gather*}
that is, $p$ is a $\text{D}$-linear combination of complete symmetric polynomials in $\th_1$, $\th_2$.

For arbitrary $n$, our polynomial $\vartheta$ is symmetric since, by the above argument, it is symmetric in every pair $\th_i$, $\th_j$ of variables. Moreover, considered as a polynomial in a pair $\th_i$, $\th_j$, it is a~$\text{D}$-linear combination of complete symmetric polynomials in~$\th_i$, $\th_j$. It is then immediate that~$\vartheta$ is a~$\text{D}$-linear combination of complete symmetric polynomials in $\th_1,\dots,\th_n$.

To f\/inish the proof of the statement that the formula (\ref{equdposol}) gives the general solution of the system~(\ref{equdpo}) it is left to check that the complete symmetric polynomials $H_L$, $L=0,1,\dots$, in the variables $\th_1,\dots,\th_n$ satisfy the system~(\ref{equdpo}). Let~$s$ be an auxiliary variable and
\begin{gather}\label{knukn} H(s)=\sum_{L=0}^{\infty}H_Ls^L=\prod_k\frac{1}{1-s\th_k} \end{gather}
be the generating function of the elements $H_L$, $L=0,1,\dots$ It is suf\/f\/icient to show that the formal power series~(\ref{knukn}) satisf\/ies the system~(\ref{equdpo}). Fix $ i,j\in\{1,\dots,n\}$, $i\neq j$, and let $\zeta_{ij}=\frac{1}{(1-\tilde{h}_i s)(1-\tilde{h}_j s)}$. The element
\begin{gather*}\Delta_i \left( \frac{\tilde{h}_{ij}}{\big(1-\tilde{h}_i s\big)\big(1-\tilde{h}_j s\big)} \right) =\frac{1}{1-\tilde{h}_j s}\left(\frac{\tilde{h}_{ij}}{1-\tilde{h}_i s}-\frac{\tilde{h}_{ij}-1}{1-\big(\tilde{h}_i-1\big) s} \right)\\
\hphantom{\Delta_i \left( \frac{\tilde{h}_{ij}}{\big(1-\tilde{h}_i s\big)\big(1-\tilde{h}_j s\big)} \right)}{} =
\frac{1}{\big(1-\tilde{h}_i \tau\big)\big(1-\big(\tilde{h}_i-1\big) \tau\big)} \end{gather*}
does not depend on $\th_j$ so $\text{Y}_{ij}(\zeta_{ij})=0$. Therefore $\text{Y}_{ij}(H(s))=0$ since the factors other than $\zeta_{ij}$ in the product in the right hand side of~(\ref{knukn}) do not depend on $\th_i$ and $\th_j$.

(ii) Finally, the summands in (\ref{equdposol}) are uniquely def\/ined since~(\ref{equdposol}) is a partial fraction decomposition of the element~$f$ in~$\th_1$.
\end{proof}

{\bf 5.\ Proof of Lemma~\ref{idesy}(i).} Let $t$ be an auxiliary indeterminate. Multiplying by $t^{-L-1}$ and taking sum in $L$, we rewrite~(\ref{idesy1}) in the form
\begin{gather*}\sum_{j=1}^n\frac{1}{t-\th_j} \frac{1}{\chi_j}=\frac{1}{\prod\limits_{j=1}^n \big(t-\th_j\big)}.\end{gather*}
The left hand side is nothing else but the partial fraction decomposition, with respect to $t$, of the product in the right hand side.

{\bf 6.\ Proof of Theorem \ref{gensolth}.} The assertion of the Theorem follows immediately from the decomposition (\ref{equdposol}) in Lemma~\ref{idesysolc}
and the identity~(\ref{idesy1}).

{\bf 7.\ Proof of Lemma \ref{idesy}(ii) and~(iii).} (ii) The formula (\ref{cosyinw}) follows from the uniqueness of the decomposition~(\ref{equdposol}) in Lemma~\ref{idesysolc}.

The element $f$ of the form (\ref{equdposol}) is $\mathbb{S}_n$-invariant if and only if $f\in\mathcal{H}$ and the assertion (\ref{cosyinwz}) follows.

(iii) For $j=1$ formula (\ref{cosyinw2}) is the uniqueness statement of Lemma~\ref{idesysolc}. In the proof of Lemma~\ref{idesysolc} we could have selected any $\th_j$ instead of $\th_1$.

\subsection{System (\ref{eqsigib}) }\label{masy}
We proceed to the study of the system (\ref{eqsigib}), that is, the system of equations
\begin{gather}\label{zsyeq}\text{Z}_{ij}=0,\qquad i,j=1,\dots,n,
\end{gather}
where
\begin{gather*} \text{Z}_{ij}=\th_{ij}\Delta_j\sigma_i-\sigma_i+\sigma_j=-\Delta_j\big( \big(\th_{ji}+1\big)\sigma_i\big)+\sigma_j.\end{gather*}
for the $n$-tuple $\sigma_1,\dots,\sigma_n\in\bar{\U}(n)$.

{\bf 1.} We use the equations $\text{Z}_{1j}$, $j=2,\dots,n$, to express the elements $\sigma_j$, $j=2,\dots,n$, in terms of the element $\sigma_1$:
\begin{gather}\label{sijsi1}\sigma_j=\Delta_j\big( \big(\th_{j1}+1\big)\sigma_1\big)=\th_{j1}\Delta_j(\sigma_1)+\sigma_1.\end{gather}
Substituting the expressions (\ref{sijsi1}) into the equations $\text{Z}_{i1}$, $i=2,\dots,n$,
we f\/ind
\begin{gather*}\th_{i1}\big(\Delta_1\big(\th_{i1}\Delta_i\sigma_1+\sigma_1\big)-\Delta_i\sigma_1\big)=0.\end{gather*}
Simplifying by $\th_{i1}$ we obtain
\begin{gather}\label{eqs1js1}\text{W}_i=0,\qquad i=2,\dots,n,\end{gather}
where
\begin{gather*}%\label{eqs1js1b}
\text{W}_i=\Delta_1\big(\th_{i1}\Delta_i\sigma_1+\sigma_1\big)-\Delta_i\sigma_1
=\Delta_i\big(\Delta_1\big(\big(\th_{i1}+1\big)\sigma_1\big)-\sigma_1\big)\\
\hphantom{\text{W}_i}{} =\Delta_i\big( \th_{i1}\sigma_1-\big(\th_{i1}+2\big)\sigma_1[-\varepsilon_1]\big). \end{gather*}
Substituting the expressions (\ref{sijsi1}) into the equations $\text{Z}_{ij}$, $i,j=2,\dots,n$, we f\/ind
\begin{gather*}\th_{i1}\big( \th_{ij}\Delta_i\Delta_j\sigma_1+\Delta_j\sigma_1-\Delta_i\sigma_1\big)=0.\end{gather*}
Simplifying by $\th_{i1}$, we obtain, with the notation~(\ref{yeqs}),
\begin{gather}\label{eqsijs1}\text{Y}_{ij}(\sigma_1)=0,\qquad i,j=2,\dots,n.\end{gather}

This is our f\/irst conclusion which we formulate in the following lemma.

\begin{Lemma}If $\sigma_1,\dots,\sigma_n\in\bar{\U}(n)$ is a solution of the system~\eqref{zsyeq} then the element $\sigma_1$ sa\-tis\-fies the equations~\eqref{eqs1js1} and~\eqref{eqsijs1}. Conversely, if an element $\sigma_1\in\bar{\U}(n)$ satisfies the equations~\eqref{eqs1js1} and~\eqref{eqsijs1} then we reconstruct a solution of the system~\eqref{zsyeq} with the help of the formulas~\eqref{sijsi1}.
\end{Lemma}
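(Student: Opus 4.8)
The plan is to recognize that this lemma merely packages the substitution computation carried out just above it, and that both implications follow from the same set of algebraic identities once one observes that every factor $\th_{i1}$ occurring in those identities is invertible in $\bar{\U}(n)$. Accordingly, I would organize the argument around the index pattern of the equations $\text{Z}_{ij}=0$ in \eqref{zsyeq}, treating separately the cases $(1,j)$, $(i,1)$ and $(i,j)$ with $i,j\ge 2$, together with the trivial diagonal $i=j$, for which $\text{Z}_{ii}=0$ identically since $\th_{ii}=0$.

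For the forward implication, I would start from an arbitrary solution $\sigma_1,\dots,\sigma_n$ of \eqref{zsyeq}. The equations $\text{Z}_{1j}=0$, $j=2,\dots,n$, are by definition exactly the relations \eqref{sijsi1}, so the $\sigma_j$ are automatically expressed through $\sigma_1$. Substituting \eqref{sijsi1} into $\text{Z}_{i1}=0$ and into $\text{Z}_{ij}=0$ ($i,j=2,\dots,n$) produces, respectively, $\th_{i1}\text{W}_i=0$ and $\th_{i1}\text{Y}_{ij}(\sigma_1)=0$, with $\text{Y}_{ij}$ as in \eqref{yeqs}; dividing by the invertible element $\th_{i1}$ yields precisely \eqref{eqs1js1} and \eqref{eqsijs1}. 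This is the computation already displayed before the statement, so nothing new is required beyond recording it.

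For the converse, I would take $\sigma_1$ satisfying \eqref{eqs1js1} and \eqref{eqsijs1}, define $\sigma_2,\dots,\sigma_n$ by \eqref{sijsi1}, and verify $\text{Z}_{ij}=0$ for all $i,j$ by running the same identities backwards. Concretely: $\text{Z}_{ii}=0$ holds identically; $\text{Z}_{1j}=0$ holds by the very definition \eqref{sijsi1}; $\text{Z}_{i1}=\th_{i1}\text{W}_i=0$ by \eqref{eqs1js1}; and for $i,j\ge 2$ one has, after substitution,
\[ \text{Z}_{ij}=\th_{i1}\,\text{Y}_{ij}(\sigma_1)=\th_{i1}\big(\th_{ij}\Delta_i\Delta_j\sigma_1+\Delta_j\sigma_1-\Delta_i\sigma_1\big)=0 \]
by \eqref{eqsijs1}. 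Here it is worth noting that the two displayed forms of $\text{Y}_{ij}(\sigma_1)$ coincide exactly: the shifted terms appearing in $\Delta_i\Delta_j(\th_{ij}\sigma_1)$ reorganize into $\Delta_j\sigma_1-\Delta_i\sigma_1$ because $\Delta_i$ and $\Delta_j$ commute, so that $\Delta_j\Delta_i\sigma_1-\Delta_i\Delta_j\sigma_1=0$ absorbs the discrepancy; no separate verification of an ``alternative form'' of the $\Delta$-system is needed.

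The proof has essentially no hard step; its only content is bookkeeping. The single point that must not be glossed is the invertibility of $\th_{i1}$ in $\bar{\U}(n)$, which is what turns ``simplify by $\th_{i1}$'' into a genuine equivalence rather than a one-way implication, and thereby lets both directions be read off the identities $\text{Z}_{i1}=\th_{i1}\text{W}_i$ and $\text{Z}_{ij}=\th_{i1}\text{Y}_{ij}(\sigma_1)$. The only remaining care is to confirm that the four index cases above exhaust all pairs $(i,j)$: the diagonal is handled trivially, the pairs with a $1$ in either slot are covered by $\text{Z}_{1j}$ and $\text{Z}_{i1}$, and the pairs with both indices $\ge 2$ are covered by \eqref{eqsijs1}, which is imposed for $i,j=2,\dots,n$ independently and hence for both orders at once.
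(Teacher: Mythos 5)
Your proof is correct and follows essentially the same route as the paper, which proves this lemma by the substitution computation displayed immediately before its statement: the identities $\text{Z}_{1j}=0 \Leftrightarrow \eqref{sijsi1}$, $\text{Z}_{i1}=\th_{i1}\text{W}_i$ and $\text{Z}_{ij}=\th_{i1}\text{Y}_{ij}(\sigma_1)$, with the invertibility of $\th_{i1}$ in $\bar{\U}(n)$ making the ``simplification'' a genuine equivalence. Your explicit bookkeeping of the four index cases and of the equality of the two forms of $\text{Y}_{ij}$ is consistent with, and slightly more detailed than, what the paper records.
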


{\bf 2.} We shall now analyze the consequences imposed by the equations (\ref{eqs1js1}) on the partial fraction decomposition of the element $\sigma_1$ with respect to $\th_1$. The full form of the expression $W_i$ reads
\begin{gather}\label{eqs1js1c}W_i=\th_{i1}\sigma_1-\big(\th_{i1}+2\big)\sigma_1[-\varepsilon_1]-\big(\th_{i1}-1\big)\sigma_1[-\varepsilon_i]+
\big(\th_{i1}+1\big)\sigma_1[-\varepsilon_1-\varepsilon_i].\end{gather}
We write the element $\sigma_1$ in the form (keeping the notation of Section \ref{secgez})
\begin{gather}\label{decfsi1} \sigma_1=\frac{A}{\big(\th_{i1}-a_1\big)^{\nu_1}\cdots \big(\th_{i1}-a_L\big)^{\nu_L}},\end{gather}
where $a_1< a_2<\dots < a_M$, $\nu_1,\nu_2,\dots,\nu_M\in\mathbb{Z}_{\geq 0}$ and $A\in R_{1,i}\bar{\U}(n)$ is
not divisible by any factor in the denominator.

Substitute the expression (\ref{decfsi1}) into the equation $W_i=0$. The denominator $\big(\th_{i1}-a_L-1\big)$ is present only in term $\big(\th_{i1}-1\big)\sigma_1[-\varepsilon_i]$ in~(\ref{eqs1js1c}). It has therefore to be compensated by $\big(\th_{i1}-1\big)$. Hence the only allowed value of $a_L$ is $a_L=0$ and we have $\nu_L\leq 1$. Similarly, the denominator $\big(\th_{ij}-a_1+1\big)$ appears only in the term $\big(\th_{i1}+2\big)\sigma_1[-\varepsilon_1]$ in~(\ref{eqs1js1c}). It has to be compensated by~$\big(\th_{i1}+2\big)$. Hence the only allowed value of $a_1$ is $a_1=0$ and we have $\nu_1\leq 1$.

It follows that the partial fraction decomposition of the element $\sigma_1$ with respect to $\th_1$ reads
\begin{gather}\label{proms1}\sigma_1=\sum_{k=2}^n\left( \frac{A_k}{\th_{k1}}+\frac{A_k'}{\th_{k1}+1}\right)+B,\end{gather}
where $A_k$, $A_k'$, $k=2,\dots,n$, do not depend on~$\th_1$ and~$B$ is regular in~$\th_1$.

{\bf 3.} The equations (\ref{eqs1js1}) impose further restrictions on the constituents of the decomposition~(\ref{proms1}) of the element~$\sigma_1$.
Substitute the decomposition (\ref{proms1}) into the equation $W_i=0$. The terms which have denominators of the form $\th_{i1}+m$, $m\in\mathbb{Z}$, in (\ref{eqs1js1c}) are
\begin{gather}
 \th_{i1}\left( \frac{A_i}{\th_{i1}}+\frac{A_i'}{\th_{i1}+1}\right)-\big(\th_{i1}+2\big)\left( \frac{A_i}{\th_{i1}+1}+\frac{A_i'}{\th_{i1}+2}\right)\nonumber \\
\qquad{} - \big(\th_{i1}-1\big)\left( \frac{A_i[-\varepsilon_i]}{\th_{i1}-1}+\frac{A_i'[-\varepsilon_i]}{\th_{i1}}\right)+\big(\th_{i1}+1\big)\left( \frac{A_i[-\varepsilon_i]}{\th_{i1}}+\frac{A_i'[-\varepsilon_i]}{\th_{i1}+1}\right).\label{proms1b} \end{gather}
In the expression (\ref{proms1b}), the terms with the denominator $\th_{i1}+1$ read
\begin{gather*}\frac{\th_{i1}A_i'-\big(\th_{i1}+2\big)A_i+\big(\th_{i1}+1\big)A_i'[-\varepsilon_i]}{\th_{i1}+1}=-\frac{A_i'+A_i}{\th_{i1}+1}+A_i'-A_i+
A_i'[-\varepsilon_i].\end{gather*}
Therefore,
\begin{gather*}A_i+A_i'=0,\qquad i=2,\dots,n.\end{gather*}
With this condition, the expression (\ref{proms1b}) vanishes.

We conclude that
\begin{gather}\label{proms1c}\sigma_1=\sum_{k=2}^n \left( \frac{A_k}{\th_{k1}}-\frac{A_k}{\th_{k1}+1}\right)+B.\end{gather}

{\bf 4.} Now we substitute the obtained expression (\ref{proms1c}) for $\sigma_1$ into the equation $W_j=0$ with $j\neq i$ and follow the singularities of the form $\th_{i1}+m$, $m\in\mathbb{Z}$. The singular terms are
\begin{gather}
\th_{j1}\left( \frac{A_i}{\th_{i1}}-\frac{A_i}{\th_{i1}+1}\right)-\big(\th_{j1}+2\big)\left( \frac{A_i}{\th_{i1}+1}-\frac{A_i}{\th_{i1}+2}\right) \nonumber\\ \qquad{} -\big(\th_{j1}-1\big)\left( \frac{A_i[-\varepsilon_j]}{\th_{i1}}-\frac{A_i[-\varepsilon_j]}{\th_{i1}+1}\right)+(\th_{j1}+1)\left( \frac{A_i[-\varepsilon_j]}{\th_{i1}+1}-\frac{A_i[-\varepsilon_j]}{\th_{i1}+2}\right). \label{proms1cw} \end{gather}
In the expression (\ref{proms1cw}), the terms with the denominator $\th_{i1}$ read
\begin{gather*}\frac{ \th_{j1}A_i-\big(\th_{j1}-1\big)A_i[-\varepsilon_j]}{\th_{i1}}.\end{gather*}
Therefore, the numerator, as a polynomial in $\th_1$, must be divisible by the denominator $\th_{i1}$. The polynomial remainder of this division equals
\begin{gather*}\th_{ij}A_i-\big(\th_{ij}+1\big)A_i[-\varepsilon_j]=\Delta_j\big(\th_{ij}A_i\big).\end{gather*}
Therefore, for any $j=2,\dots,n$, $j\neq i$, the combination $\th_{ij}A_i$ does not depend on $\th_j$. It follows that
\begin{gather*}A_i=\frac{\alpha_i\big(\th_i\big)}{\prod\limits_{l\colon l\neq 1,i}\th_{il}},\qquad i=2,\dots,n,\end{gather*}
where each $\alpha_i$ is a univariate polynomial.

For the moment, we have found that
\begin{gather*} \sigma_1=\sigma_1^{(s)}+B,\end{gather*}
where the element $B$ is regular in $\th_1$ and
\begin{gather*}%\label{proms1d}
\sigma_1^{(s)}=\sum_{i=2}^n\left( \frac{1}{\th_{i1}}-\frac{1}{\th_{i1}+1}\right)\frac{\alpha_i\big(\th_i\big)}{\prod\limits_{l\colon l\neq 1,i}\th_{il}}.\end{gather*}
A direct calculation shows that the element $\sigma_1^{(s)}$ satisf\/ies the equations~(\ref{eqs1js1}) and~(\ref{eqsijs1}), so it is left to analyze the regular in $\th_1$ part $B$.

{\bf 5.} Since the element $B$ satisf\/ies the system of equations~(\ref{eqsijs1}), we can use the results of Lemma~\ref{idesysolc} with $\text{D}=\mathbb{K}[\th_1]$. According to Lemma~\ref{idesysolc}, we can write (with an obvious shift in indices) the partial fraction decomposition of the element $B$ with respect to $\th_2$ in the form
\begin{gather*}%\label{pafrac2}
B=\sum_{j=3}^n \frac{u_j\big(\th_j,\th_1\big)}{\prod\limits_{l\colon l\neq 1,j}\th_{jl}}+C,\end{gather*}
where $u_j\big(\th_j,\th_1\big)$, $j=3,\dots,n$, is a polynomial in $\th_j$, $\th_1$ and $C$ is a linear combination of complete symmetric polynomials in $\th_2,\dots,\th_n$ with coef\/f\/icients in $\mathbb{K}[\th_1]$.

The equation $W_2(B)=0$ implies that the expression
\begin{gather*}%\label{buhbuh}
\th_{21}B-\big(\th_{21}+2\big)B[-\varepsilon_1]\end{gather*}
does not depend on $\th_2$. In the notation of paragraph~1 in Section~\ref{secgez}, the part $\mathcal{P}_{2;j}$, $j=3,\dots,n$, of this expression is
\begin{gather*}\frac{1}{\prod\limits_{l\colon l\neq 12,j}\th_{jl}}\left(\frac{\th_{21}u_j\big(\th_j,\th_1\big)-\big(\th_{21}+2\big)u_j\big(\th_j,\th_1-1\big)
}{\th_{j2}}\right).\end{gather*}
Therefore, $\th_{21}u_j\big(\th_j,\th_1\big)-\big(\th_{21}+2\big)u_j\big(\th_j,\th_1-1\big)$ is divisible, as polynomial in $\th_2$, by $\th_{2j}$. So the value of $\th_{21}u_j\big(\th_j,\th_1\big)-\big(\th_{21}+2\big)u_j\big(\th_j,\th_1-1\big)$
at $\th_2=\th_j$ is zero,
\begin{gather}\label{proms1e}\th_{j1}u_j\big(\th_j,\th_1\big)-\big(\th_{j1}+2\big)u_j\big(\th_j,\th_1-1\big)=0.\end{gather}
Set
\begin{gather}\label{proms1ex}u_j=\frac{\beta_j}{\th_{j1}\big(\th_{j1}+1\big)}.\end{gather}
Then equation~(\ref{proms1e}) becomes
\begin{gather*}\frac{\beta_j}{\th_{j1}+1}+\frac{\beta_j[-\varepsilon_1]}{\th_{j1}+1}=0,\end{gather*}
or $\Delta_1(\beta_j)=0$, so $\beta_j$ depends only on $\th_j$. But then if $\beta_j\neq 0$, the formula~(\ref{proms1ex}) shows that $u_j$ cannot be a polynomial in~$\th_1$.

We conclude that the principal part of the element $B$ with respect to $\th_2$ vanishes, and $B=C$ is a polynomial in all its variables.

{\bf 6.} We claim that $C$ is a $\mathbb{K}$-linear combination of the elements $\Delta_1(H_L)$, $L=1,2,\dots$, where~$H_L$ are the complete symmetric polynomials in $\th_1,\dots,\th_n$.

Consider f\/irst the case $n=2$. Set
\begin{gather*}C=\frac{\xi}{\th_{21}\big(\th_{21}+1\big)},\end{gather*}
where $\xi$ is some polynomial in $\th_1$ and $\th_2$.
With this substitution the equation $W_2(C)=0$ becomes
\begin{gather*}\Delta_2\left( \frac{1}{\th_{21}+1}\Delta_1(\xi)\right)=0, \end{gather*}
that is,
\begin{gather*}\frac{1}{\th_{21}+1}\Delta_1(\xi)=\mu,\end{gather*}
where $\mu$ does not depend on $\th_2$. Note that by construction, the polynomial $\xi$ is divisible by $\th_{21}\big(\th_{21}+1\big)$, which implies that $\mu$ is a polynomial in $\th_1$. Since $\Delta_1$ is surjective on polynomials, we can write $\mu=\Delta_1^2 \big(z\big(\th_1\big)\big)$ for some univariate polynomial $z$, that is
\begin{gather*}\Delta_1(\xi)=\big(\th_{21}+1\big)\Delta_1^2\big(z\big(\th_1\big)\big).\end{gather*}
We have
\begin{gather*}\big(\th_{21}+1\big)\Delta_1^2\big(z\big(\th_1\big)\big)=\Delta_1\big(\th_{21}\Delta_1\big(z\big(\th_1\big)\big)+z\big(\th_1\big) \big).\end{gather*}
Therefore,
\begin{gather*}\Delta_1\big( \xi-\th_{21}\Delta_1\big(z\big(\th_1\big)\big)-z\big(\th_1\big)\big)=0,\qquad \text{or}\qquad \xi=\th_{21}\Delta_1\big(z\big(\th_1\big)\big)+z\big(\th_1\big)+w\big(\th_2\big),\end{gather*}
where $w\big(\th_2\big)$ is a polynomial in $\th_2$. That is,
\begin{gather}\label{cujcuj}C=\frac{\Delta_1\big(z\big(\th_1\big)\big)}{\th_{21}+1}+\frac{z\big(\th_1\big)+w\big(\th_2\big)}{\th_{21}\big(\th_{21}+1\big)}.\end{gather}
Since the element $C$ is a polynomial, the denominator $\th_{21}$ in the second term in the right hand side of~(\ref{cujcuj}) shows that $w=-z$. Therefore,
\begin{gather*}C=\frac{\th_{21}\Delta_1\big(z\big(\th_1\big)\big)+z\big(\th_1\big)-z\big(\th_2\big)}{\th_{21}\big(\th_{21}+1\big)}=\Delta_1\left( \frac{z\big(\th_1\big)-z\big(\th_2\big)}{\th_{21}}\right),\end{gather*}
as claimed.

The claim for arbitrary $n$ follows since for any $j>2$ the element $C$ is a linear combination of $\Delta_1\big(H_L\big(\th_1,\th_j\big)\big)$, $L=1,2,\dots$

{\bf 7.} We summarize the results of this section in the following proposition.

\begin{Proposition} The general solution of the system~\eqref{eqs1js1} and \eqref{eqsijs1} has the form
\begin{gather}\label{proms1dn}\sigma_1=\sum_{i=2}^n\left( \frac{1}{\th_{i1}}-\frac{1}{\th_{i1}+1}\right)\frac{\alpha_i\big(\th_i\big)}{\prod\limits_{l\colon l\neq 1,i}\th_{il}}+\Delta_1(\nu),
\qquad \nu\in\mathcal{H} \end{gather}
and $\alpha_2,\dots,\alpha_n$ are univariate polynomials. The elements $\alpha_2,\dots,\alpha_n$ and $\nu$ are uniquely defined.
\end{Proposition}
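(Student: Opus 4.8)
The plan is to determine $\sigma_1$ by examining its partial fraction decomposition with respect to the distinguished variable $\th_1$, using the two families of constraints in complementary ways: the equations $\text{W}_i=0$ from~\eqref{eqs1js1}, whose full form~\eqref{eqs1js1c} mixes the shifts $-\varepsilon_1$ and $-\varepsilon_i$, and the equations $\text{Y}_{ij}(\sigma_1)=0$ from~\eqref{eqsijs1}, which involve only $\th_2,\dots,\th_n$. First I would locate the poles of $\sigma_1$ in $\th_1$. Substituting a generic partial fraction ansatz into $\text{W}_i=0$ and matching the extreme denominators $\th_{i1}-a_L-1$ and $\th_{i1}-a_1+1$ against the numerator factors $(\th_{i1}-1)$ and $(\th_{i1}+2)$ available in~\eqref{eqs1js1c}, I expect each pole in $\th_{i1}$ to be forced to be simple and located only at $\th_{i1}=0$ or $\th_{i1}=-1$, which yields the shape~\eqref{proms1}. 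Collecting the residues at $\th_{i1}=-1$ then gives $A_i+A_i'=0$, reducing~\eqref{proms1} to~\eqref{proms1c}.

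Next I would pin down the surviving numerators $A_i$ using the \emph{cross} equations $\text{W}_j=0$ with $j\neq i$. Tracking the terms carrying the denominator $\th_{i1}$ and demanding polynomial divisibility should show that $\th_{ij}A_i$ is independent of $\th_j$ for every $j\neq 1,i$, whence $A_i=\alpha_i(\th_i)/\prod_{l\colon l\neq 1,i}\th_{il}$ for univariate polynomials $\alpha_i$. A direct check that the resulting singular part $\sigma_1^{(s)}$ solves both families then supplies the first sum in~\eqref{proms1dn}.

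It remains to analyze the part $B$ of $\sigma_1$ regular in $\th_1$. Since $B$ inherits $\text{Y}_{ij}(B)=0$ for $i,j=2,\dots,n$, I would invoke Lemma~\ref{idesysolc} over the domain $\text{D}=\mathbb{K}[\th_1]$ to split $B$ into $\mathcal{W}_j$-type terms (in $\th_2,\dots,\th_n$) plus a symmetric polynomial remainder $C$. The equation $\text{W}_2(B)=0$ must eliminate the $\mathcal{W}_j$-type terms: writing the numerator as $u_j=\beta_j/(\th_{j1}(\th_{j1}+1))$ reduces the constraint to $\Delta_1(\beta_j)=0$, and polynomiality of $u_j$ in $\th_1$ forces $\beta_j=0$, so $B=C$ is genuinely polynomial. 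Solving $\text{W}_2(C)=0$ in the two-variable case, via surjectivity of $\Delta_1$ on univariate polynomials and matching of denominators, should identify $C$ with $\Delta_1(\nu)$ for $\nu$ a linear combination of complete symmetric polynomials, i.e.\ $\nu\in\mathcal{H}$; the case of general $n$ follows pairwise. Uniqueness of $\alpha_2,\dots,\alpha_n$ and $\nu$ is then immediate, since~\eqref{proms1dn} is exactly the partial fraction decomposition of $\sigma_1$ in $\th_1$.

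The step I expect to be the main obstacle is the treatment of the regular part $B$: separating the truly polynomial contribution $C$ from spurious $\mathcal{W}_j$-type terms requires combining the structural description coming from the $\text{Y}$-system (Lemma~\ref{idesysolc}) with the extra rigidity imposed by the inhomogeneous shift in $\text{W}_2$, and then recognizing that the admissible polynomial solutions form precisely the image $\Delta_1(\mathcal{H})$ rather than a larger polynomial space.
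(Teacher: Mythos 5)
Your proposal reproduces the paper's own argument step for step: the extreme-denominator analysis of $\text{W}_i=0$ forcing simple poles at $\th_{i1}=0,-1$, the residue cancellation $A_i+A_i'=0$, the cross equations $\text{W}_j=0$ giving $A_i=\alpha_i(\th_i)/\prod_{l\neq 1,i}\th_{il}$, the appeal to Lemma~\ref{idesysolc} over $\text{D}=\mathbb{K}[\th_1]$ for the regular part, the substitution $u_j=\beta_j/(\th_{j1}(\th_{j1}+1))$ killing the residual principal parts, and the final identification of the polynomial remainder with $\Delta_1(\mathcal{H})$ via surjectivity of $\Delta_1$. This is correct and is essentially the same proof as in Section~\ref{masy}.
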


\subsection{Potential. Proof of Proposition \ref{propotentialb}}\label{secpotpr}
\paragraph{First proof.} We rewrite the formula (\ref{proms1dn}) in the form
\begin{gather*}\sigma_1=\Delta_1(\sigma),\qquad \text{where}\quad \sigma=\sum_{i=2}^n \frac{\alpha_i\big(\th_i\big)}{\chi_i}+\nu\in\mathcal{W}
.\end{gather*}
Then the expressions for the elements $\sigma_j$, $j=2,\dots,n$, see (\ref{sijsi1}), read
\begin{gather}\label{nuina}\sigma_j=\Delta_j\left( \big(\th_{j1}+1\big)\sum_{i=2}^n \frac{\alpha_i\big(\th_i\big)}{\big(\th_{i1}+1\big)\chi_i}+\big(\th_{j1}+1\big)\Delta_1(\nu)\right).\end{gather}
Since, for $\nu\in\mathcal{H}$,
\begin{gather*}\Delta_1\Delta_j\big(\th_{j1}\nu\big)=0,\end{gather*}
we f\/ind that
\begin{gather*}\Delta_j\big(\big(\th_{j1}+1\big)\Delta_1(\nu)\big)=\Delta_j(\nu).\end{gather*}

The term with $i=j$ in the sum in the right hand side of (\ref{nuina}) is simply
\begin{gather*}\frac{\alpha_j\big(\th_j\big)}{\chi_j}.\end{gather*}
Since
\begin{gather*}\frac{\th_{j1}+1}{\th_{i1}+1}=\frac{\th_{i1}+1+\th_{ji}}{\th_{i1}+1}=1+\frac{\th_{ji}}{\th_{i1}+1},\end{gather*}
we can rewrite the term with $i\neq j$ in the right hand side of (\ref{nuina}) in the form
\begin{gather*}\Delta_j\left( \big(\th_{j1}+1\big)\frac{\alpha_i\big(\th_i\big)}{\big(\th_{i1}+1\big)\chi_i}\right)=\Delta_j\left( \frac{\alpha_i\big(\th_i\big)}{\chi_i}-
\frac{\alpha_i\big(\th_i\big)}{\big(\th_{i1}+1\big)\prod\limits_{l\neq i,j}\th_{il}}\right)=\Delta_j\left( \frac{\alpha_i\big(\th_i\big)}{\chi_i} \right).\end{gather*}
Therefore,
\begin{gather*}\sigma_j=\Delta_j(\sigma)\qquad \text{for all}\quad j=1,\dots, n.\end{gather*}
The proof of Proposition \ref{propotentialb} is completed.

{\bf Second proof.} Let $p\in \U(n)$ be a polynomial such that $\Delta_1\Delta_2 (p)=0$. Thus, $\Delta_2(p)$ does not depend on $\th_1$ so, by surjectivity of $\Delta_2$ on polynomials in~$\th_2$, there exists a polynomial $p_1$ which does not depend on $\th_1$ and $\Delta_2(p)=\Delta_2(p_1)$. The polynomial $p_2:=p-p_1$ does not depend on~$\th_2$. The next lemma generalizes this decomposition
\begin{gather}\label{sepr1a}p=p_1+p_2,\qquad \Delta_1(p_1)=0,\qquad \Delta_2(p_2)=0,\end{gather}
to the ring $\bar{\U}(n)$.

\begin{Lemma}\label{diseder} Let $f\in \bar{\U}(n)$. If
\begin{gather*}%\label{sepr1}
\Delta_1\Delta_2 (f)=0 \end{gather*}
then there exist elements $f_1,f_2\in\bar{\U}(n)$ such that $f_1$ does not depend on $\th_1$, $f_2$ does not depend on $\th_2$, and
\begin{gather}\label{sepr2}f=f_1+f_2. \end{gather}
\end{Lemma}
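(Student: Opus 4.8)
The plan is to reduce the lemma to a one-variable antidifference problem and to solve it by partial fractions and telescoping. First I would record the elementary fact that for $h\in\bar{\U}(n)$ one has $\Delta_j h=0$ if and only if $h$ is independent of $\th_j$: the equation $\Delta_j h=0$ reads $h=h[-\varepsilon_j]$, and a rational function invariant under the integer translation $\th_j\mapsto\th_j-1$ must be constant in $\th_j$. Applying this with $j=1$ to $g:=\Delta_2 f$, the hypothesis $\Delta_1\Delta_2 f=0$ means exactly that $g\in N_1\bar{\U}(n)$. The lemma then reduces to producing an element $f_1\in N_1\bar{\U}(n)$ with $\Delta_2 f_1=g$: indeed $f_2:=f-f_1$ then satisfies $\Delta_2 f_2=g-g=0$, so $f_2\in N_2\bar{\U}(n)$, and $f=f_1+f_2$ is the decomposition \eqref{sepr2}.

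To build $f_1$ I would antidifference $g$ termwise along its partial fraction decomposition with respect to $\th_2$. As $g$ is independent of $\th_1$ it has no poles on $\th_{21}-a=0$, so $g=\operatorname{reg}_2(g)+\sum_{k\colon k\neq 1,2}\mathcal{P}_{2;k}(g)$, where $\operatorname{reg}_2(g)$ is a polynomial in $\th_2$ and each $\mathcal{P}_{2;k}(g)=\sum_{a,\nu}v_{ka\nu}/(\th_{2k}-a)^\nu$ has coefficients $v_{ka\nu}$ that do not depend on $\th_1$ or $\th_2$; see \eqref{prijk}. For the regular part I invert $\Delta_2$ by the surjectivity of $\Delta_2$ on polynomials in $\th_2$ (the same fact used for \eqref{sepr1a}), which returns an antidifference with the same, hence $\th_1$-free, coefficients. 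For a fixed principal part, a preimage $\sum_a q_a/(\th_{2k}-a)^\nu$ exists precisely when the finite telescoping recursion $q_a-q_{a-1}[-\varepsilon_2]=v_{ka\nu}$ is solvable with finite support, and such a solution is available because $f$ itself provides one: the projection onto the $\th_{2k}$-principal part commutes with $\Delta_2$, so the $\th_{2k}$-pole coefficients of $f$ — finite in number, since $f\in\bar{\U}(n)$ — already solve this recursion.

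The crux is to check that this $f_1$ is independent of $\th_1$, equivalently that the finite-support solution of $q_a-q_{a-1}[-\varepsilon_2]=v_{ka\nu}$ is $\th_1$-free. Two finite-support solutions differ by a finite-support solution $r_a$ of the homogeneous recursion $r_a=r_{a-1}[-\varepsilon_2]$; evaluating at the smallest index of the support forces $r\equiv 0$, so the finite-support solution is unique. Since $v_{ka\nu}$ is $\th_1$-free, applying $\Delta_1$ to the recursion yields $\Delta_1 q_a-(\Delta_1 q_{a-1})[-\varepsilon_2]=0$, so $\{\Delta_1 q_a\}$ is a finite-support homogeneous solution and therefore vanishes; hence every $q_a$ is $\th_1$-free. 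The polynomial part is $\th_1$-free for the same reason, its coefficients descending from the $\th_1$-free element $g$. Assembling the termwise antidifferences gives $f_1\in N_1\bar{\U}(n)$ with $\Delta_2 f_1=g$, and the proof is complete.

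I expect the last step to be the main obstacle. The tempting shortcut — set $e:=\Delta_1 f$, note $\Delta_2 e=0$, and try to correct $f$ by a $\th_2$-free preimage of $e$ under $\Delta_1$ — merely reproduces the same problem with the roles of $\th_1$ and $\th_2$ interchanged. It is the explicit finiteness of the pole data of an element of $\bar{\U}(n)$, exploited through the uniqueness of finite-support antidifferences, that breaks this circularity and pins the antidifference down inside $N_1\bar{\U}(n)$.
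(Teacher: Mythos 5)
Your proof is correct, but it follows a genuinely different route from the paper's. The paper works directly with $f$: it takes the partial fraction decomposition of $f$ with respect to $\th_1$, shows $\mathcal{P}_{1;2}(f)=0$ and that the coefficients of the remaining principal parts $\mathcal{P}_{1;j}(f)$, $j>2$, do not depend on $\th_2$ (both by extremal-denominator arguments applied to $\Delta_1\Delta_2$ of each piece), repeats the procedure in $\th_2$ for the regular remainder, and finally invokes the polynomial splitting \eqref{sepr1a} for the part regular in both variables. You instead reduce the lemma to a cleaner statement --- the $\th_1$-free element $g=\Delta_2 f$ admits a $\th_1$-free $\Delta_2$-antidifference in $\bar{\U}(n)$ --- and construct it termwise on the partial fraction decomposition of $g$ in $\th_2$, handling the regular part by surjectivity of $\Delta_2$ on polynomials and each principal part via the telescoping recursion on pole coefficients. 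The two nontrivial points in your argument both check out: the projection onto the $\th_{2k}$-principal part does commute with $\Delta_2$ (the shift $[-\varepsilon_2]$ preserves the type of each constituent, so uniqueness of the decomposition applies), which supplies a finite-support solution from the pole data of $f$; and the uniqueness of finite-support solutions of $q_a-q_{a-1}=v_{ka\nu}$ (the shift on $q_{a-1}$ is vacuous since the coefficients lie in $N_2\bar{\U}(n)$) combined with $\Delta_1 v_{ka\nu}=0$ forces the solution to be $\th_1$-free. What your approach buys is a uniform treatment of the polynomial and singular parts and the avoidance of the paper's two-stage dissection of $f$; what the paper's buys is consistency with the extremal-pole technique it reuses throughout Section~\ref{secgez} and an explicit description of which pieces of $f$ land in $f_1$ and which in $f_2$. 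Your closing remark about the circularity of the naive symmetric correction is apt; the finite-support uniqueness is indeed what breaks it.
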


\begin{proof} Decompose $f$ into partial fractions with respect to $\th_1$.

We have $\mathcal{P}_{1;2}(f)=0$. Indeed, write $\mathcal{P}_{1;2}(f)$ in the form
\begin{gather*}\mathcal{P}_{1;2}(f)=\frac{u}{\big(\th_{12}-a_1\big)^{\nu_1}\cdots \big(\th_{12}-a_L\big)^{\nu_L}},\end{gather*}
where $a_1< a_2<\dots < a_L$, $\nu_1,\nu_2,\dots,\nu_L\in\mathbb{Z}_{> 0}$ and $u\in R_{1,2}\bar{\U}(n)$ is not divisible by any factor in the denominator. Assume that $u\neq 0$. Then
\begin{gather*} \Delta_1\Delta_2 (\mathcal{P}_{1;2}(f))=
\frac{u+u[-\varepsilon_1-\varepsilon_2]}{\big(\th_{12}-a_1\big)^{\nu_1}\cdots \big(\th_{12}-a_L\big)^{\nu_L}}- \frac{u[-\varepsilon_1]}{\big(\th_{12}-a_1-1\big)^{\nu_1}\cdots \big(\th_{12}-a_L-1\big)^{\nu_L}} \\
\hphantom{\Delta_1\Delta_2 (\mathcal{P}_{1;2}(f))= }{}-\frac{u[-\varepsilon_2]}{\big(\th_{12}-a_1+1\big)^{\nu_1}\cdots \big(\th_{12}-a_L+1\big)^{\nu_L}}. \end{gather*}
The factor $\big(\th_{12}-a_L-1\big)$ appears only in the denominator of the second term in the right hand side and cannot be compensated by the numerator. Thus $\mathcal{P}_{1;2}(f)=0$ (the consideration of the factor $\big(\th_{12}-a_1+1\big)$ in the denominator of the third term proves the claim as well).

Now we write the part $\mathcal{P}_{1;j}(f)$, $j>2$, in the form (\ref{prijk}),
\begin{gather*} \mathcal{P}_{1;j}(f)=\sum_{a\in\mathbb{Z}} \sum_{\nu_a\in\mathbb{Z}_{>0}} \frac{u_{ja\nu_a}}{\big(\th_{1j}-a\big)^{\nu_a}},\end{gather*}
where $u_{ja\nu_a}\in N_1\bar{\U}(n)$ and the sums are f\/inite. Then
\begin{gather}\label{xurabsu} \Delta_1\Delta_2 (\mathcal{P}_{1;j}(f))=\sum_{a\in\mathbb{Z}} \sum_{\nu_a\in\mathbb{Z}_{>0}} \Delta_2(u_{ja\nu_a})\left(\frac{1}{\big(\th_{1j}-a\big)^{\nu_a}}
-\frac{1}{\big(\th_{1j}-a-1\big)^{\nu_a}}\right).\end{gather}
We prove that the elements $u_{ja\nu_a}$ do not depend on~$\th_2$. Indeed, if this is not true then there is a minimal $a\in\mathbb{Z}$ for which
$\Delta_2(u_{ja\nu_a})\neq 0$ for some~$\nu_a$. But then the denominator $(\th_{1j}-a)^{\nu_a}$ in the right hand side in~(\ref{xurabsu}) cannot be compensated.

We conclude that $f=f_{2,0}+g$ where $f_{2,0}=\sum\limits_{j>2}\mathcal{P}_{1;j}(f)$ does not depend on $\th_2$ and $g$ is regular in $\th_1$.

We decompose $g$ with respect to $\th_2$. As above, the part $\mathcal{P}_{2;1}(g)$ vanishes and the calculation, parallel to (\ref{xurabsu}), shows that
$\mathcal{P}_{2;j}(g)$, $j>2$, does not depend on $\th_1$. Now we have
\begin{gather*}f=f_{2,0}+f_{1,0}+f^{+},\end{gather*}
where $f_{1,0}=\sum\limits_{j>2}\mathcal{P}_{2;j}(g)$ does not depend on $\th_1$ and $f^{+}$ is regular in $\th_1$ and $\th_2$.

We use the decomposition (\ref{sepr1a}) for the regular part $f^{+}$ and write $f^{+}=f^{+}_1+f^{+}_2$, where $f^{+}_1$ does not depend on $\th_1$ and $f^{+}_2$ does not depend on $\th_2$. This leads to the required decomposition~(\ref{sepr2}) with $f_1=f_{1,0}+f^{+}_1$ and $f_2=f_{2,0}+f^{+}_2$.
\end{proof}

\begin{Lemma}\label{exiopo} Let $\sigma_1,\dots,\sigma_k$, $k\leq n$, be a $k$-tuple of elements in $\bar{\U}(n)$ such that
\begin{gather*}%\label{sepr4}
\Delta_a(\sigma_b)=\Delta_b (\sigma_a),\qquad a,b=1,\dots,k.\end{gather*}
Assume that $\sigma_a$ belongs to the image of $\Delta_a$ for all $a=1,\dots,k$, that is, there exist elements $f_1,\dots,f_k\in \bar{\U}(n)$ for which $\sigma_a=\Delta_a(f_a)$, $a=1,\dots,k$. Then there exists a potential $f\in \bar{\U}(n)$ such that
\begin{gather*}%\label{sepr5}
\sigma_a=\Delta_a (f)=0,\qquad a=1,\dots,k. \end{gather*}
\end{Lemma}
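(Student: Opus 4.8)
The statement is a discrete Poincar\'e lemma: a tuple that is ``closed'' in the sense of the compatibility relations $\Delta_a\sigma_b=\Delta_b\sigma_a$ and componentwise ``exact'' in the sense that each $\sigma_a\in\operatorname{im}\Delta_a$ should admit a single common potential. The plan is to argue by induction on $k$, using Lemma~\ref{diseder} as the engine. The base case $k=1$ is trivial: take $f=f_1$.

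For the inductive step I would assume the statement for $k-1$. The first $k-1$ elements $\sigma_1,\dots,\sigma_{k-1}$ satisfy the hypotheses for $k-1$ (the compatibility relations are inherited, and each lies in the image of the corresponding $\Delta_a$), so there is $g\in\bar{\U}(n)$ with $\Delta_a(g)=\sigma_a$ for $a=1,\dots,k-1$. I then set $\tau:=\sigma_k-\Delta_k(g)$ and look for a correction $h$, independent of $\th_1,\dots,\th_{k-1}$, with $\Delta_k(h)=\tau$; then $f:=g+h$ works, since $\Delta_a(h)=0$ for $a<k$ forces $\Delta_a(f)=\sigma_a$ for every $a=1,\dots,k$. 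First I would record two facts about $\tau$. Since the shift operators commute, $\Delta_a\Delta_k=\Delta_k\Delta_a$, and hence for $a<k$ the compatibility hypothesis gives $\Delta_a(\tau)=\Delta_a(\sigma_k)-\Delta_k(\Delta_a g)=\Delta_a(\sigma_k)-\Delta_k(\sigma_a)=0$, so $\tau$ does not depend on $\th_1,\dots,\th_{k-1}$. Moreover $\tau\in\operatorname{im}\Delta_k$, because both $\sigma_k=\Delta_k(f_k)$ and $\Delta_k(g)$ lie in $\operatorname{im}\Delta_k$.

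It remains to produce a $\Delta_k$-preimage of $\tau$ that inherits the independence of $\th_1,\dots,\th_{k-1}$, and I would do this by a second induction on $m$, producing $h_m$ with $\Delta_k(h_m)=\tau$ and $h_m$ independent of $\th_1,\dots,\th_m$. The case $m=0$ is any preimage, which exists since $\tau\in\operatorname{im}\Delta_k$; at $m=k-1$ one obtains the desired $h$. In the step $m\to m+1$, from $\Delta_{m+1}(\tau)=0$ one gets $\Delta_{m+1}\Delta_k(h_m)=0$, so Lemma~\ref{diseder}, applied to the pair of indices $(m+1,k)$, writes $h_m=p+q$ with $p$ independent of $\th_{m+1}$ and $q$ independent of $\th_k$; then $\Delta_k(p)=\Delta_k(h_m)=\tau$, and I set $h_{m+1}:=p$.

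The main obstacle is to check that this splitting can be arranged to preserve the independences already achieved, i.e., that $p$ remains independent of $\th_1,\dots,\th_m$. This is the crux, and it is settled by re-examining the partial-fraction construction in the proof of Lemma~\ref{diseder}: if $h_m$ does not depend on $\th_b$ for some $b\le m$, then by uniqueness of the partial fraction decompositions with respect to $\th_{m+1}$ and $\th_k$ used there, the coefficients $u_{ja\nu_a}$ and the regular remainder are again independent of $\th_b$, and hence so are both $p$ and $q$. Granting this variable-preserving refinement of Lemma~\ref{diseder}, the double induction closes and yields the common potential $f$ with $\sigma_a=\Delta_a(f)$ for $a=1,\dots,k$.
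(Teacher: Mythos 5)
Your argument is correct, and while it runs on the same engine as the paper's proof (induction on $k$ powered by Lemma~\ref{diseder}), the induction is organized differently. The paper applies the induction hypothesis to the two overlapping $(k-1)$-subsets $\{1,3,\dots,k\}$ and $\{2,3,\dots,k\}$, obtaining potentials $F$ and $G$; since $F-G$ is independent of $\th_c$, $c=3,\dots,k$, and satisfies $\Delta_1\Delta_2(F-G)=0$, a single application of Lemma~\ref{diseder} splits $F-G=u-v$ and $f:=F+v=G+u$ is the common potential. You instead take the hypothesis on $\{1,\dots,k-1\}$, form the defect $\tau=\sigma_k-\Delta_k(g)$, and strip a $\Delta_k$-preimage of $\tau$ of its dependence on $\th_1,\dots,\th_{k-1}$ one variable at a time, invoking Lemma~\ref{diseder} $k-1$ times per outer step. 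The paper's gluing is more economical, but both routes rest on exactly the refinement you flag as the crux: the summands produced by the proof of Lemma~\ref{diseder} inherit independence from any additional variable on which the input does not depend. (The paper needs this too --- to conclude $\Delta_c(v)=0$ for $c\geq 3$, so that $\Delta_c(f)=\sigma_c$ --- and leaves it tacit, so identifying and justifying it is a genuine improvement in rigor rather than an extra burden of your approach.) Your justification via the canonical partial fraction decompositions is right; the only point to add is that the final step of the proof of Lemma~\ref{diseder} also uses the polynomial splitting \eqref{sepr1a}, where the $\Delta$-preimage must likewise be chosen free of the extra variables, which the explicit basis $\th_2^{\uparrow m}$ and the formula $\Delta_2\big(\th_2^{\uparrow m}\big)=m\th_2^{\uparrow m-1}$ make automatic.
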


\begin{proof} For $k=1$ there is nothing to prove. Let now $k>1$. We use the induction in $k$. By the induction hypothesis, there exist elements $F,G\in\bar{\U}(n)$ such that
\begin{gather*}\sigma_a=\Delta_a(F)\qquad \text{for}\quad a=1,3,\dots,k\qquad \text{and}\qquad \sigma_b=\Delta_b(G)\qquad \text{for}\quad b=2,3,\dots,k.\end{gather*}
Then
\begin{gather*}\Delta_c(F)=\Delta_c(G) \qquad \text{for}\quad c=3,\dots,k\qquad \text{and}\qquad \Delta_1\Delta_2(G)=\Delta_2\Delta_1(F).\end{gather*}
The element $F-G$ does not depend on $\th_c$, $c=3,\dots,k$, and $\Delta_1\Delta_2(F-G)=0$. According to Lemma~\ref{diseder}, there exist two elements $u,v\in \bar{\U}(n)$ such that $u$ does not depend on $\th_2$, $v$ does not depend on $\th_1$, and $F-G=u-v$. Then
\begin{gather*}f:=F+v=G+u\end{gather*}
is the desired potential. \end{proof}

{\bf Second proof of Proposition \ref{propotentialb}}. The symmetric, in $i$ and $j$, part of the equation (\ref{eqsigib}) is
\begin{gather}\label{foposy}\Delta_i \sigma_j=\Delta_j \sigma_i.\end{gather}
The system (\ref{foposy}) by itself does not imply the existence of a potential. However, the equation~(\ref{eqsigib}) can be written in the form
$\sigma_j=\Delta_j\big( \big(\th_{ji}+1\big)\sigma_i\big)$. So for each $j=1,\dots,n$ the ele\-ment~$\sigma_j$ belongs to the image of the operator $\Delta_j$. Then, according to Lemma~\ref{exiopo}, there exists $\sigma\in\bar{\U}(n)$ such that $\sigma_j=\Delta_j(\sigma)$.

\subsection{Polynomial potentials. Proof of Lemma \ref{posoazhe}}\label{prole5}
The operator $\q_i$ def\/ined by (\ref{zheautomq}) can be an automorphism of the ring $\Diffs(n)$ only if
\begin{gather}\label{klja1}\q_i(\sigma_j)=\sigma_{s_i(j)}=\Delta_{s_i(j)}(\sigma),\qquad i,j=1,\dots,n.\end{gather}
On the other hand,
\begin{gather}\label{klja2}\q_i(\sigma_j)=\q_i(\Delta_j(\sigma))=\Delta_{s_i(j)}(\q_i(\sigma)),\qquad i,j=1,\dots,n.\end{gather}
Comparing (\ref{klja1}) and (\ref{klja2}) we obtain
\begin{gather*}\Delta_j(\sigma-\q_i(\sigma))=0,\qquad i,j=1,\dots,n,\end{gather*}
which implies that $\sigma$ is $\mathbb{S}_n$-invariant. The assertion now follows from Lemma \ref{idesy}(ii).

\subsection{Central elements. Proof of Proposition \ref{lece}}\label{ceele}
(i) To analyze the relation $x^j c(t)-c(t)x^j=0$, we shall write the expression $x^j c(t)-c(t)x^j$ in the ordered form, in the order $\der\Z\Z$.
The element
\begin{gather*}%\label{gefufoce2}
c_0(t)=\sum_i\frac{e(t)}{1+\th_{i}t }\Gamma_i\end{gather*}
is central in the homogeneous ring $\text{Dif\/f}_{\h,0}(n)$, see the calculation in \cite[Proposition~3]{HO}. Hence we have to track only those ordered terms whose f\/iltration degree, see~(\ref{defist}), is smaller than~3. As before, we use the symbol $u\big|_{\text{l.d.t.}}$ to denote these lower degree terms in an expression~$u$. We have
\begin{gather*}\big(x^j c(t)-c(t)x^j\big)\big|_{\text{l.d.t.}}=\left(-\frac{e(t)}{1+\th_j t}\sigma_j-\rho(t)[-\varepsilon_j]+\rho(t)\right) x^j.\end{gather*}
Thus the element $c(t)$ commutes with the generators $x^j$, $j=1,\dots,n$, if and only if the polynomial $\rho(t)$ satisf\/ies the system~(\ref{corho}).
The use of the anti-automorphism (\ref{antiautomb}) shows that the element $c(t)$ then commutes with the generators $\der_j$, $j=1,\dots,n$, as well.

 (ii) We check the case $j=1$. The calculation for $\sigma\in\mathcal{W}_j$ is similar.

Since the combination $\frac{e(t)}{1+\th_1 t}$ does not depend on $\th_1$, we have, for $\rho(t)=\frac{e(t)}{1+\th_1 t} \sigma$,
\begin{gather*}\Delta_1\rho(t)=\frac{e(t)}{1+\th_1 t}\Delta_1\sigma=\frac{e(t)}{1+\th_1 t}\sigma_1.\end{gather*}
For $j>1$ we have
\begin{gather}\label{deasi}\sigma=\big(\th_{1j}+1\big)\Delta_j\sigma,\qquad j=2,\dots,n,\end{gather}
and we calculate
\begin{gather*} \Delta_j\rho(t)=
\frac{e(t)}{\big(1+\th_1 t\big)\big(1+\th_j t\big)}\Delta_j\big(\big(1+\th_j t\big)\sigma\big) \\
\hphantom{\Delta_j\rho(t)}{}
= \frac{e(t)}{\big(1+\th_1 t\big)\big(1+\th_j t\big)}\big( t\sigma+\big(1+\big(\th_j-1\big)t\big)\Delta_j\sigma\big)= \frac{e(t)}{1+\th_j t}\Delta_j\sigma ,\end{gather*}
according to the formula (\ref{deasi}).

(iii) The proof is the same as for the ring $\Diff(n)$, see \cite[Lemma 8]{HO}.

\subsection{Rings of fractions. Proof of Lemma \ref{leis2}}\label{seisrf}

(i) The set $\mathfrak{B}_{\text{D}}:=\big\{\th_i,\Z'^{\circ i},c_i\big\}_{i=1}^n$, where $\Z'^{\circ i}:=\Z^i\psi_i'$, $i=1,\dots,n$, generates the localized ring $\text{S}_\Z^{-1}\Diffs (n)$. Moreover, the complete set of the def\/ining relations for the generators from the set $\mathfrak{B}_{\text{D}}$ does not remember about the potential $\sigma$. It reads
\begin{gather*}%\label{defrebd}
\th_i\th_j=\th_j\th_i,\qquad \th_i \Z'^{\circ j}=\Z'^{\circ j}\big(\th_i+\delta_i^j\big),\qquad \Z'^{\circ i}\Z'^{\circ j}=\Z'^{\circ j}\Z'^{\circ i},\qquad i,j=1,\dots,n,\\
c_i\ \text{are central},\qquad i=1,\dots,n.
\end{gather*}
The proof is the same as for the ring $\Diff(n)$, see \cite{HO}. The isomorphism is now clear.

(ii) Assume that $\iota\colon \Diffs(n)\to\text{Dif\/f}_{\h,\sigma'}(n)$ is an isomorphism of f\/iltered rings over $\bar{\U}(n)$. To distinguish the generators, we denote the generators of the ring $\text{Dif\/f}_{\h,\sigma'}(n)$ by $\Z'^{i}$ and $\der'_i$.

The $\varepsilon_i$-weight subspace $\mathfrak{E}_i$ of the ring $\Diffs(n)$ consists of elements of the form $\theta \Z^i$ where~$\theta$ is a polynomial in the elements $\Gamma_j$, $j=1,\dots,n$, with coef\/f\/icients in $\bar{\U}(n)$. Since the space of the elements of $\mathfrak{E}_i$ of f\/iltration degree $\leq 1$ is $\bar{\U}(n)\Z^i$, we must have
\begin{gather}\label{priso}\iota\colon \ \Z^i\mapsto \mu_i\Z'^i,\qquad \der_i\mapsto\der'_i\nu_i\end{gather}
with some invertible elements $\mu_i,\nu_i\in \bar{\U}(n)$, $i=1,\dots,n$. Let $\gamma_i:=\mu_i\nu_i$, $i=1,\dots,n$. The def\/ining relation~(\ref{hdesp10c}) and the corresponding relation for the ring $\text{Dif\/f}_{\h,\sigma'}(n)$ shows that the formulas~(\ref{priso}) may def\/ine an isomorphism only if
\begin{gather}\label{noha1}\gamma_i=\gamma_j[\varepsilon_j],\qquad i,j=1,\dots,n,\end{gather}
and
\begin{gather}\label{noha2}\gamma_i\sigma_i'=\sigma_i,\qquad i=1,\dots,n.\end{gather}
The condition (\ref{noha1}) implies that $\gamma_i=\gamma$ for some $\gamma\in\mathbb{K}$. The condition~(\ref{noha2}) then becomes $\gamma\sigma_i'=\sigma_i$
and the assertion follows.

\subsection{Lowest weight representations. Proof of Proposition \ref{anosege}}\label{ansetv}
We need the following identity (see \cite[Lemma~5]{HO}):
\begin{gather}\label{neio}\sum_j\frac{1}{\th_j+t^{-1}}\QQ^+_j=1-\frac{e(t)[-\varepsilon]}{e(t)} \end{gather}
and its several consequences. At $t=\big(1-\th_m\big)^{-1}$, $m=1,\dots,n$, the equality (\ref{neio}) becomes
\begin{gather}\label{neio2}\sum_j \frac{1}{\th_{jm}+1} \QQ^+_j=1.\end{gather}
Then,
\begin{gather}
\sum_i \frac{1}{1+t\th_i} \frac{1}{\th_{ik}+1} \QQ^+_i = \frac{1}{1+t\big(\th_k-1\big)}\sum_i\left( \frac{1}{\th_{ik}+1}-\frac{t}{1+t\th_i}\right)\QQ^+_i \nonumber\\
\hphantom{\sum_i \frac{1}{1+t\th_i} \frac{1}{\th_{ik}+1} \QQ^+_i}{}
= \frac{1}{1+t\big(\th_k-1\big)}\frac{e(t)[-\varepsilon]}{e(t)}.\label{vspna1} \end{gather}
We used (\ref{neio}) and (\ref{neio2}) in the last equality. The substitution $\th_i\rightsquigarrow -\th_i+1$, $i=1,\dots,n$, and $t\rightsquigarrow -t$ into (\ref{vspna1}) gives
\begin{gather}\label{neio3}\sum_i\frac{1}{1+t\big(\th_i-1\big)}\frac{1}{\th_{ki}+1}\QQ^-_i=\frac{1}{1+t\th_k}\frac{e(t)}{e(t)[-\varepsilon]}.\end{gather}

{\bf Proof of Proposition \ref{anosege}.} Since the element $c(t)$ is central, it is suf\/f\/icient to calculate its value on the vector $\vert\;\rangle$. Denote
\begin{gather*} c(t)\vert\;\rangle=\omega(t)\vert\;\rangle.\end{gather*}

We have
\begin{gather}\label{inredx}\der_j x^i=\sum_{k,l}\PPsi_{jl}^{ik}x^l \der_k +\sum_k \PPsi_{jk}^{ik}\sigma_k,\end{gather}
where $\PPsi$ is the skew inverse of the operator~$\RR$, see~(\ref{skewn}) (we refer, e.g., to \cite[Section 4.1.2]{O} for details on skew inverses).

Since the generators $\der_i$, $i=1,\dots,n$, annihilate the vector $\vert\;\rangle$, see~(\ref{dmoac}), we f\/ind, in view of~(\ref{inredx}), that
\begin{gather} \omega(t)=
 \sum_{i,k}\frac{e(t)}{1+t\th_i}\PPsi^{ik}_{ik}\sigma_k -\rho(t)=\sum_{i,k}\frac{e(t)}{1+t\th_i} \frac{1}{\th_{ik}+1}\QQ^+_i \QQ^-_k\sigma_k -\rho(t)\nonumber\\
\hphantom{\omega(t)}{}
= e(t)[-\varepsilon]\underline{ \sum_k \frac{1}{1+t(\th_k-1)} \QQ^-_k\sigma_k } -\rho(t). \label{nuidoal1} \end{gather}
We used (\ref{explpsi}) in the second equality and (\ref{vspna1}) in the third equality.

We shall verify (\ref{reoacec}) for every representative of the space $\mathcal{W}$. As in the proof of Proposi\-tion~\ref{lece}(ii), it is suf\/f\/icient to establish~(\ref{reoacec}) for
\begin{gather*}\sigma=\frac{A\big(\th_1\big)}{\chi_1},\qquad \text{where}\ A\ \text{is a univariate polynomial}.\end{gather*}
Then
\begin{gather}\label{neio5}
\sigma_j=\frac{1}{\th_{1j}+1}\sigma,\qquad j=2,\dots,n,\end{gather}
and, according to Proposition \ref{lece}(ii),
\begin{gather}\label{neio6} \rho(t)=\frac{e(t)}{1+t\th_1}\sigma.\end{gather}
Denote the underlined sum in (\ref{nuidoal1}) by $\xi$. Taking into account (\ref{neio5}) we calculate
\begin{gather*} \xi =
\frac{1}{1+t\big(\th_1-1\big)} (\sigma-\sigma[-\varepsilon_1] )\QQ^-_1
+\sigma\sum_{j=2}^n \frac{1}{1+t\big(\th_j-1\big)} \frac{1}{\th_{1j}+1}\QQ^-_j \\
\hphantom{\xi}{}
 = -\frac{\sigma[-\varepsilon_1]\QQ^-_1}{1+t\big(\th_1-1\big)}+\sigma\sum_{j=1}^n \frac{1}{1+t\big(\th_j-1\big)} \frac{1}{\th_{1j}+1}\QQ^-_j\\
\hphantom{\xi}{}
= -\frac{\sigma[-\varepsilon_1]\QQ^-_1}{1+t\big(\th_1-1\big)}+\frac{1}{1+t\th_1}\frac{\sigma e(t)}{e(t)[-\varepsilon]}.\end{gather*}
We have used (\ref{neio3}) in the last equality. Note that
\begin{gather*}\sigma[-\varepsilon_1] \QQ^-_1=\frac{A\big(\th_1-1\big)}{\chi_1[-\varepsilon_1]} \frac{\chi_1[-\varepsilon_1]}{\chi_1}=\frac{A\big(\th_1-1\big)}{\chi_1}=\sigma[-\varepsilon],\end{gather*}
so
\begin{gather*}\xi=-\frac{\sigma[-\varepsilon]}{1+t\big(\th_1-1\big)}+\frac{1}{1+t\th_1}\frac{\sigma e(t)}{e(t)[-\varepsilon]}.\end{gather*}
Substituting the obtained expression for $\xi$ into (\ref{nuidoal1}) and taking into account (\ref{neio6}) we conclude that
\begin{gather*}
\omega(t)= e(t)[-\varepsilon]\left( -\frac{\sigma[-\varepsilon]}{1+t\big(\th_1-1\big)}+
\frac{1}{1+t\th_1}\frac{\sigma e(t)}{e(t)[-\varepsilon]}\right)-\frac{e(t)}{1+t\th_1}\sigma \\
\hphantom{\omega(t)}{}=- \frac{e(t)[-\varepsilon]}{1+t\big(\th_1-1\big)}\sigma[-\varepsilon] =-\rho(t)[-\varepsilon],\end{gather*}
as stated.

\subsection{Several copies. Proof of Lemma \ref{seco}}\label{seseco}

Assume that, say, $N>1$. Repeating the calculations (\ref{fiwa}) and (\ref{sewa}) for one copy in Section~\ref{PBWle1}, we f\/ind, for $i,j,k=1,\dots,n$,
\begin{gather}
\big( x^{i\alpha}\der_{j\beta}\big) \der_{k\gamma}\big|_{\text{l.d.t.}}=
\left( \sum_{u,v}\RR^{ui}_{vj}[\varepsilon_u]\der_{u\beta}x^{v\alpha}-\delta^i_j\sigma_{i\alpha\beta}\right)\der_{k\gamma}
\big|_{\text{l.d.t.}} \nonumber\\
\hphantom{\big( x^{i\alpha}\der_{j\beta}\big) \der_{k\gamma}\big|_{\text{l.d.t.}}}{}
 = -\sum_{u}\RR^{ui}_{kj}[\varepsilon_u]\der_{u\beta}\sigma_{k\alpha\gamma}-\delta^i_j\sigma_{i\alpha\beta}\der_{k\gamma},\label{fiwanb1}
\\
x^{i\alpha}\big( \der_{j\beta} \der_{k\gamma}\big)\big|_{\text{l.d.t.}}= x^{i\alpha}\sum_{a,b}\RR^{ab}_{kj}\der_{b\gamma}\der_{a\beta}\big|_{\text{l.d.t.}} \nonumber\\
\hphantom{x^{i\alpha}\big( \der_{j\beta} \der_{k\gamma}\big)\big|_{\text{l.d.t.}}}{} =
\sum_{a,b}\RR^{ab}_{kj}[-\varepsilon_i]\left( \sum_{c,d}\RR^{ci}_{db}[\varepsilon_c]\der_{c\gamma}x^{d\alpha}-\delta^i_b\sigma_{i\alpha\gamma}\right)\der_{a\beta}
\big|_{\text{l.d.t.}} \nonumber\\
\hphantom{x^{i\alpha}\big( \der_{j\beta} \der_{k\gamma}\big)\big|_{\text{l.d.t.}}}{}=
-\sum_{a,b,c}\RR^{ab}_{kj}[-\varepsilon_i] \RR^{ci}_{ab}[\varepsilon_c]\der_{c\gamma}\sigma_{a\alpha\beta}-\sum_{a}\RR^{ai}_{kj}[-\varepsilon_i]
\sigma_{i\alpha\gamma}\der_{a\beta} . \label{sewanb1}\end{gather}
Take $\beta\neq\gamma$.
Equating the coef\/f\/icients in $\der_{u\beta}$, $u=1,\dots,n$, in (\ref{fiwanb1}) and (\ref{sewanb1}), we f\/ind
\begin{gather}\label{ysy1}\RR^{ui}_{kj}[\varepsilon_u]\sigma_{k\alpha\gamma}[\varepsilon_u]=\RR^{ui}_{kj}[-\varepsilon_i]\sigma_{i\alpha\gamma}
,\qquad i,k,j,u=1,\dots,n.\end{gather}
Equating the coef\/f\/icients in $\der_{u\gamma}$, $u=1,\dots,n$, in (\ref{fiwanb1}) and (\ref{sewanb1}), we f\/ind
\begin{gather}\label{ysy2}\delta^i_j\delta^u_k\sigma_{i\alpha\beta}=\sum_{a,b}\RR^{ab}_{kj}[-\varepsilon_i] \RR^{ui}_{ab}[\varepsilon_u]
\sigma_{a\alpha\beta}[\varepsilon_u],\qquad i,k,j,u=1,\dots,n.\end{gather}

Shifting by $-\varepsilon_u$ and using the property (\ref{weze}) we rewrite the equality (\ref{ysy1}) in the form
\begin{gather}\label{kosha1}\RR^{ui}_{kj}\left(\sigma_{k\alpha\gamma}-\sigma_{i\alpha\gamma}[-\varepsilon_u]\right)=0.\end{gather}
Setting $u=k$ and $j=i$ (with arbitrary $i,k=1,\dots,n$) in (\ref{kosha1}), we obtain
\begin{gather*}\sigma_{k\alpha\gamma}=\sigma_{i\alpha\gamma}[-\varepsilon_k], \end{gather*}
which implies the assertion (\ref{posinb1}).

A direct calculation, with the help of the properties (\ref{weze}), (\ref{iceco}) and (\ref{Q5do}) of the operator~$\RR$, shows that the condition~(\ref{posinb1}) implies the equalities (\ref{ysy1}) and (\ref{ysy2}) as well as all the remaining conditions for the f\/latness of the deformation.

\subsection*{Acknowledgements}

The work of O.O.\ was supported by the Program of Competitive Growth of Kazan Federal University and by the grant RFBR 17-01-00585.

\pdfbookmark[1]{References}{ref}
\LastPageEnding


\begin{thebibliography}{99}
\footnotesize\itemsep=0pt

\bibitem{AF}
Alekseev A.Y., Faddeev L.D., {$(T^*G)_t$}: a toy model for conformal f\/ield
 theory, \href{https://doi.org/10.1007/BF02101512}{\textit{Comm. Math. Phys.}} \textbf{141} (1991), 413--422.

\bibitem{B}
Bergman G.M., The diamond lemma for ring theory, \href{https://doi.org/10.1016/0001-8708(78)90010-5}{\textit{Adv. Math.}}
 \textbf{29} (1978), 178--218.

\bibitem{Bo}
Bokut' L.A., Embeddings into simple associative algebras, \href{https://doi.org/10.1007/BF01877233}{\textit{Algebra
 Logic}} \textbf{15} (1976), 73--90.

\bibitem{BF}
Bytsko A.G., Faddeev L.D., {$(T^*{\mathcal B})_q$}, {$q$}-analog of model space
 and the {C}lebsch--{G}ordan coef\/f\/icients generating matrices,
 \href{https://doi.org/10.1063/1.531780}{\textit{J.~Math. Phys.}} \textbf{37} (1996), 6324--6348,
 \href{https://arxiv.org/abs/q-alg/9508022}{q-alg/9508022}.

\bibitem{FHIOPT}
Furlan P., Hadjiivanov L.K., Isaev A.P., Ogievetsky O.V., Pyatov P.N., Todorov
 I.T., Quantum matrix algebra for the {${\rm SU}(n)$} {WZNW} model,
 \href{https://doi.org/10.1088/0305-4470/36/20/310}{\textit{J.~Phys.~A: Math. Gen.}} \textbf{36} (2003), 5497--5530,
 \href{https://arxiv.org/abs/hep-th/0003210}{hep-th/0003210}.

\bibitem{GT}
Gel'fand I.M., Tsetlin M.L., Finite-dimensional representations of the group of
 unimodular matrices, \textit{Dokl. Akad. Nauk USSR} \textbf{71} (1950),
 825--828, {E}nglish translation in Gelfand~I.M., Collected papers, Vol.~II,
 Springer-Verlag, Berlin, 1988, 653--656.

\bibitem{HIOPT}
Hadjiivanov L.K., Isaev A.P., Ogievetsky O.V., Pyatov P.N., Todorov I.T., Hecke
 algebraic properties of dynamical {$R$}-matrices. {A}pplication to related
 quantum matrix algebras, \href{https://doi.org/10.1063/1.532779}{\textit{J.~Math. Phys.}} \textbf{40} (1999),
 427--448, \href{https://arxiv.org/abs/q-alg/9712026}{q-alg/9712026}.

\bibitem{KN}
Khoroshkin S., Nazarov M., Mickelsson algebras and representations of
 {Y}angians, \href{https://doi.org/10.1090/S0002-9947-2011-05367-5}{\textit{Trans. Amer. Math. Soc.}} \textbf{364} (2012), 1293--1367,
 \href{https://arxiv.org/abs/0912.1101}{arXiv:0912.1101}.

\bibitem{KO1}
Khoroshkin S., Ogievetsky O., Mickelsson algebras and {Z}helobenko operators,
 \href{https://doi.org/10.1016/j.jalgebra.2007.04.020}{\textit{J.~Algebra}} \textbf{319} (2008), 2113--2165,
 \href{https://arxiv.org/abs/math.QA/0606259}{math.QA/0606259}.

\bibitem{KO2}
Khoroshkin S., Ogievetsky O., Diagonal reduction algebras of ${\bf gl}$ type,
 \href{https://doi.org/10.1007/s10688-010-0023-0}{\textit{Funct. Anal. Appl.}} \textbf{44} (2010), 182--198, \href{https://arxiv.org/abs/0912.4055}{arXiv:0912.4055}.

\bibitem{KO3}
Khoroshkin S., Ogievetsky O., Structure constants of diagonal reduction
 algebras of {${\bf gl}$} type, \href{https://doi.org/10.3842/SIGMA.2011.064}{\textit{SIGMA}} \textbf{7} (2011), 064,
 34~pages, \href{https://arxiv.org/abs/1101.2647}{arXiv:1101.2647}.

\bibitem{KO4}
Khoroshkin S., Ogievetsky O., Rings of fractions of reduction algebras,
 \href{https://doi.org/10.1007/s10468-012-9397-4}{\textit{Algebr. Represent. Theory}} \textbf{17} (2014), 265--274.

\bibitem{KO5}
Khoroshkin S., Ogievetsky O., Diagonal reduction algebra and the ref\/lection
 equation, \href{https://doi.org/10.1007/s11856-017-1571-2}{\textit{Israel~J. Math.}} \textbf{221} (2017), 705--729,
 \href{https://arxiv.org/abs/1510.05258}{arXiv:1510.05258}.

\bibitem{M}
Mickelsson J., Step algebras of semi-simple subalgebras of {L}ie algebras,
 \href{https://doi.org/10.1016/0034-4877(73)90006-2}{\textit{Rep. Math. Phys.}} \textbf{4} (1973), 307--318.

\bibitem{O}
Ogievetsky O., Uses of quantum spaces, in Quantum Symmetries in Theoretical
 Physics and Mathematics ({B}ariloche, 2000), \href{https://doi.org/10.1090/conm/294/04973}{\textit{Contemp. Math.}}, Vol.~294, Amer. Math. Soc., Providence, RI, 2002, 161--232.

\bibitem{HO}
Ogievetsky O., Herlemont B., Rings of {$\bf h$}-deformed dif\/ferential
 operators, \href{https://doi.org/10.1134/S0040577917080104}{\textit{Theoret. and Math. Phys.}} \textbf{192} (2017), 1218--1229,
 \href{https://arxiv.org/abs/1612.08001}{arXiv:1612.08001}.

\bibitem{T}
Tolstoy V.N., Fortieth anniversary of extremal projector method for {L}ie
 symmetries, in Noncommutative Geometry and Representation Theory in
 Mathematical Physics, \href{https://doi.org/10.1090/conm/391/07342}{\textit{Contemp. Math.}}, Vol.~391, Amer. Math. Soc.,
 Providence, RI, 2005, 371--384, \href{https://arxiv.org/abs/math-ph/0412087}{math-ph/0412087}.

\bibitem{H}
van~den Hombergh A., {H}arish-{C}handra modules and representations of step
 algebra, Ph.D.~Thesis, Katolic University of Nijmegen, 1976, available at
 \url{http://hdl.handle.net/2066/147527}.

\bibitem{WZ}
Wess J., Zumino B., Covariant dif\/ferential calculus on the quantum hyperplane,
 \href{https://doi.org/10.1016/0920-5632(91)90143-3}{\textit{Nuclear Phys.~B Proc. Suppl.}} \textbf{18} (1990), 302--312.

\bibitem{Zh1}
Zhelobenko D.P., Classical groups. {S}pectral analysis of f\/inite-dimensional
 representations, \href{https://doi.org/10.1070/RM1962v017n01ABEH001123}{\textit{Russian Math. Surveys}} \textbf{17} (1962), no.~1,
 1--94.

\bibitem{Zh2}
Zhelobenko D.P., Extremal cocycles on {W}eyl groups, \href{https://doi.org/10.1007/BF02577133}{\textit{Funct. Anal.
 Appl.}} \textbf{21} (1987), 183--192.

\bibitem{Zh}
Zhelobenko D.P., Representations of reductive Lie algebras, Nauka, Moscow,
 1994.

\end{thebibliography}
\end{document}